\documentclass[reqno]{amsart}%
\usepackage{amsmath}
\usepackage{amsfonts}
\usepackage{amssymb}
\usepackage{graphicx}%
\setcounter{MaxMatrixCols}{30}
\pdfoutput=1
\newtheorem{theorem}{Theorem}

\newtheorem{conclusion}[theorem]{Conclusion}

\newtheorem{corollary}[theorem]{Corollary}

\newtheorem{definition}[theorem]{Definition}

\newtheorem{lemma}[theorem]{Lemma}
\newtheorem{notation}[theorem]{Notation}

\newtheorem{proposition}[theorem]{Proposition}
\newtheorem{remark}[theorem]{Remark}

\newtheorem{summary}[theorem]{Summary}
\newtheorem{assumption}[theorem]{Assumption}
\numberwithin{theorem}{section}
\numberwithin{equation}{section}
\begin{document}
\title[The Brown measure of the sum]{The Brown measure of the sum of a self-adjoint element and an imaginary
multiple of a semicircular element}
\author{Brian C.\ Hall}
\address{Department of Mathematics, University of Notre Dame, Notre Dame, IN 46556, USA}
\email{bhall@nd.edu}
\author{Ching-Wei Ho}
\address{Institute of Mathematics, Academia Sinica, Taipei 10617, Taiwan; Department of
Mathematics, University of Notre Dame, Notre Dame, IN 46556, USA}
\email{cho2@nd.edu}
\thanks{Hall's research supported in part by a grant from the Simons Foundation}

\begin{abstract}
We compute the Brown measure of $x_{0}+i\sigma_{t}$, where $\sigma_{t}$ is a
free semicircular Brownian motion and $x_{0}$ is a freely independent
self-adjoint element that is not a multiple of the identity. The Brown measure
is supported in the closure of a certain bounded region $\Omega_{t}$ in the
plane. In $\Omega_{t},$ the Brown measure is absolutely continuous with
respect to Lebesgue measure, with a density that is constant in the vertical
direction. Our results refine and rigorize results of Janik, Nowak, Papp,
Wambach, and Zahed and of Jarosz and Nowak in the physics literature.

We also show that pushing forward the Brown measure of $x_{0}+i\sigma_{t}$ by
a certain map $Q_{t}:\Omega_{t}\rightarrow\mathbb{R}$ gives the distribution
of $x_{0}+\sigma_{t}.$ We also establish a similar result relating the Brown
measure of $x_{0}+i\sigma_{t}$ to the Brown measure of $x_{0}+c_{t}$, where
$c_{t}$ is the free circular Brownian motion.

\end{abstract}
\maketitle
\tableofcontents

\section{Introduction}

\subsection{Sums of independent random matrices}

A fundamental problem in random matrix theory is to understand the eigenvalue
distribution of sums of independent random matrices. When the random matrices
are Hermitian, the subordination method, introduced by Voiculescu \cite{Voi3}
and further developed by Biane \cite{BianeFreeIncr} and Voiculescu \cite{Voi4}
gives a powerful method of analyzing the problem in the setting of free
probability. (See Section \ref{bianeResult.sec} for a brief discussion of the
subordination method.) For related results in the random matrix setting, see,
for example, works of Pastur and Vasilchuk \cite{PV} and of Kargin
\cite{Kargin}.

A natural next step would be to consider non-normal random matrices of the
form $X+iY$ where $X$ and $Y$ are independent Hermitian random matrices.
Although a general framework has been developed for analyzing combinations of
freely independent elements in free probability (see works of Belinschi, Mai,
and Speicher \cite{BMS} and Belinschi, \'{S}niady, and Speicher \cite{BSS}),
it does not appear to be easy to apply this framework to get analytic results
about the $X+iY$ case.

The $X+iY$ problem has been analyzed at a nonrigorous level in the physics
literature. A highly cited paper of Stephanov \cite{stephanov} uses the case
in which $X$ is Bernoulli and $Y$ is GUE to provide a model of QCD. In the
case that $Y$ is GUE, work of Janik, Nowak, Papp, Wambach, and Zahed
\cite{JNPWZ} identified the domain into which the eigenvalues should cluster
in the large-$N$ limit. Then work of Jarosz and Nowak \cite{JN1,JN2} analyzed
the limiting eigenvalue distribution for general $X$ and $Y,$ with explicit
computations of examples when $Y$ is GUE and $X$ has various distributions
\cite[Section 6.1]{JN1}.

In this paper, we \textit{compute} the Brown measure of $x_{0}+i\sigma_{t},$
where $\sigma_{t}$ is a semicircular Brownian motion and $x_{0}$ is an
arbitrary self-adjoint element freely independent of $\sigma_{t}.$ This Brown
measure is the natural candidate for the limiting eigenvalue distribution of
random matrices of the form $X+iY$ where $X$ and $Y$ are independent and $Y$
is GUE. We also \textit{relate} the Brown measure of $x_{0}+i\sigma_{t}$ to
the distribution of $x_{0}+\sigma_{t}$ (without the factor of $i$). Our
computation of the Brown measure of $x_{0}+i\sigma_{t}$ refines and rigorizes
the results of \cite{JNPWZ} and \cite{JN1,JN2}, using a different method,
while the relationship between $x_{0}+i\sigma_{t}$ and $x_{0}+\sigma_{t}$ is a
new result. See Section \ref{previous.sec} for further discussion of these
works and Sections \ref{jnpwz.sec} and \ref{jn.sec} for a detailed comparison
of results.

Our work extends that of Ho and Zhong \cite{HZ}, which (among other results)
computes the Brown measure of $x_{0}+i\sigma_{t}$ in the case $x_{0}%
=y_{0}+\tilde{\sigma}_{t},$ where $\tilde{\sigma}_{t}$ is another semicircular
Brownian motion, freely independent of both $y_{0}$ and $\sigma_{t}.$ In this
case, $x_{0}+i\sigma_{t}$ has the form of $y_{0}+c_{2t},$ where $c_{t}$ is a
free circular Brownian motion.

Our results are based on the PDE method introduced in \cite{DHKBrown}. This
method has been used in subsequent works by Ho and Zhong \cite{HZ}, Demni and
Hamdi \cite{DH}, and Hall and Ho \cite{HHmult}, and is discussed from the
physics point of view by Grela, Nowak, and Tarnowski in \cite{GNT}. See also
the expository article \cite{PDEmethods} of the first author for an
introduction to the PDE method. Similar PDEs, in which the regularization
parameter in the construction of the Brown measure becomes a variable in the
PDE, have appeared in the physics literature in the work of Burda, Grela,
Nowak, Tarnowski, and Warcho\l \ \cite{BGNTW1,BGNTW2}.

Since this article was posted on the arXiv, three papers have appeared that
extend our results have appeared. The paper \cite{Ho1} of Ho examines in
detail the case in which $x_{0}$ is the sum of a self-adjoint element and a
freely independent semicircular element, so that $x_{0}+i\sigma_{t}$ becomes
the sum of a self-adjoint element and a freely independent elliptic element.
The paper \cite{Ho2} extends the results of the present paper by allowing
$x_{0}$ to be unbounded. Finally, the paper \cite{Zhong2} of Zhong analyzes
the Brown measure of $x_{0}+g,$ where $g$ is a twisted elliptic element and
$x_{0}$ is freely independent of $g$ but otherwise arbitrary. In the case that
$x_{0}$ is self-adjoint and $g$ is an imaginary multiple of a semicircular
element, Zhong's results reduce to ours.

\subsection{Statement of results\label{statements.sec}}

Let $\sigma_{t}$ be a semicircular Brownian motion living in a tracial von
Neumann algebra $(\mathcal{A},\tau)$ and let $x_{0}$ be a self-adjoint element
of $\mathcal{A}$ that is freely independent of every $\sigma_{t},$ $t>0.$ (In
particular, $x_{0}$ is a \textit{bounded} self-adjoint operator.) Throughout
the paper, we let $\mu$ be the distribution\ of $x_{0},$ that is, the unique
compactly supported probability measure on $\mathbb{R}$ such that%
\begin{equation}
\int_{\mathbb{R}}x^{n}~d\mu(x)=\tau(x_{0}^{n}),\quad\text{for all }%
n\in\mathbb{N}. \label{muDef}%
\end{equation}
Our goal is then to compute the Brown measure of the element%
\begin{equation}
x_{0}+i\sigma_{t} \label{x0PlusSigma}%
\end{equation}
in $\mathcal{A}.$ (See Section \ref{Brown.sec} for the definition of the Brown
measure.) Throughout the paper, we impose the following standing assumption
about $\mu.$

\begin{assumption}
\label{notDirac.assumption}The measure $\mu$ is not a $\delta$-measure, that
is, not supported at a single point.
\end{assumption}

Of course, the case in which $\mu$ is a $\delta$-measure is not hard to
analyze---in that case, $x_{0}+i\sigma_{t}$ has the form $a+i\sigma_{t},$ for
some constant $a\in\mathbb{R},$ so that the Brown measure is a semicircular
distribution on a vertical segment through $a.$ But this case is
\textit{different}; in all other cases, the Brown measure is absolutely
continuous with respect to the Lebesgue measure on a two-dimensional region in
the plane. Thus, our main results do not hold as stated in the case that $\mu$
is a $\delta$-measure.

The element (\ref{x0PlusSigma}) is the large-$N$ limit of the following random
matrix model. Let $Y^{N}$ be an $N\times N$ random variable distributed
according to the Gaussian unitary ensemble. Let $X^{N}$ be a sequence of
self-adjoint random matrices that are independent of $Y^{N}$ and whose
eigenvalue distributions converge almost surely to the law $\mu$ of $x_{0}.$
(The $X^{N}$'s may, for example, be chosen to be deterministic diagonal
matrices, which is the case in all the simulations shown in this paper.) Then
the random matrices
\begin{equation}
X^{N}+i\sqrt{t}Y^{N} \label{approxMatrices}%
\end{equation}
will converge in $\ast$-distribution to $x_{0}+i\sigma_{t}.$

In this paper we compute the Brown measure of $x_{0}+i\sigma_{t}.$ This Brown
measure is the natural candidate for the limiting empirical eigenvalue
distribution of the random matrices in (\ref{approxMatrices}). Our main
results are summarized briefly in the following theorem.

\begin{theorem}
\label{intro.thm}For each $t>0,$ there exists a continuous function
$b_{t}:\mathbb{R}\rightarrow\lbrack0,\infty)$ such that the following results
hold. Let%
\[
\Omega_{t}=\left\{  \left.  a+ib\in\mathbb{C}\right\vert ~\left\vert
b\right\vert <b_{t}(a)\right\}  .
\]
Then the Brown measure of $x_{0}+i\sigma_{t}$ is supported on the closure of
$\Omega_{t}$ and $\Omega_{t}$ itself is a set of full Brown measure. Inside
$\Omega_{t}$, the Brown measure is absolutely continuous with a density that
is \emph{constant in the vertical directions}. Specifically, the density
$w_{t}(a+ib)$ is independent of $b$ in $\Omega_{t}$ and has the form%
\[
w_{t}(a+ib)=\frac{1}{2\pi t}\left(  \frac{da_{0}^{t}(a)}{da}-\frac{1}%
{2}\right)  ,\quad a+ib\in\Omega_{t}.
\]
for a certain function $a_{0}^{t}.$
\end{theorem}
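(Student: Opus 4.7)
\bigskip

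\noindent\textbf{Proof proposal.} The plan is to adapt the PDE method of \cite{DHKBrown}, as refined by Ho--Zhong \cite{HZ}. Introduce the regularized log potential
\[
S(t,a,b,\epsilon) = \tau\!\left[\log\!\left((x_0 + i\sigma_t - \lambda)^{*}(x_0 + i\sigma_t - \lambda) + \epsilon\right)\right], \qquad \lambda = a+ib,
\]
so that the Brown measure of $x_0 + i\sigma_t$ is $\frac{1}{4\pi}\Delta_{\lambda}\lim_{\epsilon \to 0^{+}} S$ in the distributional sense. The initial datum at $t=0$ is
\[
S(0,a,b,\epsilon) = \int_{\mathbb{R}} \log\!\left((x-a)^{2} + b^{2} + \epsilon\right) d\mu(x),
\]
which is explicit in terms of $\mu$.

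The first step is to derive a first-order Hamilton--Jacobi type equation for $S$ in the variables $(t,a,b,\epsilon)$ by differentiating in $t$ and using free It\^{o} calculus for the semicircular flow $t \mapsto \sigma_t$. The sign change arising from $(i\sigma_t)^{*} = -i\sigma_t$ should yield an equation qualitatively different from the circular case $x_0 + c_t$ treated in \cite{DHKBrown,HZ}, but of the same Hamilton--Jacobi shape, so that the method of characteristics applies. The characteristic ODEs, initiated from a point $(a_0,b_0,\epsilon_0)$ at $t=0$, should have coefficients that depend on $(a_0,b_0,\epsilon_0)$ only through explicit integrals against $\mu$, making them integrable in closed form.

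The next step is to read off from the closed-form characteristic flow the structure of $\Omega_{t}$ and the density. I expect the $b$-coordinate to evolve in a particularly simple (likely affine) way so that, when $\epsilon$ is sent to $0$, for each $a$ there is a maximal interval $(-b_t(a),b_t(a))$ of $b$-values reachable by the flow from some real starting point $a_0 = a_0^{t}(a)$; this defines both the boundary function $b_t$ and the subordination-like map $a_0^{t}$. Solving for $S$ along characteristics and differentiating should produce, inside $\Omega_t$, an expression in which $\partial^{2} S / \partial b^{2}$ cancels against $\partial^{2} S /\partial a^{2}$ up to a term depending only on $a$, which is the manifestation of the vertical constancy of the density. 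Taking $\epsilon \to 0^{+}$ and applying $\frac{1}{4\pi}\Delta$ then yields the stated formula $w_t(a+ib) = \frac{1}{2\pi t}\bigl(da_0^{t}/da - \tfrac12\bigr)$.

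The main obstacle is the global analysis of the characteristic flow: identifying $\Omega_t$ as precisely the image at time $t$ of real initial data for which the flow survives without developing caustics, showing that the exterior of $\overline{\Omega_{t}}$ carries no Brown mass, and justifying the interchange of $\epsilon \to 0^{+}$ with differentiation to obtain the density on the open set $\Omega_t$. Along the way, Assumption~\ref{notDirac.assumption} must be invoked to exclude the degenerate regime in which all characteristics would collapse to a single vertical segment; showing that this is the \emph{only} obstruction to the two-dimensional absolute continuity of the Brown measure is where I expect the most bookkeeping to be required.
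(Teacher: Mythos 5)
Your overall strategy coincides with the paper's: derive a Hamilton--Jacobi PDE for $S(t,\lambda,\varepsilon)$ by free It\^{o} calculus, integrate the characteristic ODEs in closed form (their coefficients are indeed integrals against $\mu$), and recover the density after justifying the interchange of $\varepsilon\rightarrow0^{+}$ with differentiation. The genuine gap is in how you propose to reach points \emph{inside} $\Omega_{t}$. For this Hamiltonian the characteristics satisfy $b(t)=b_{0}+\tfrac{t}{2}p_{b,0}$ with $p_{b,0}=2b_{0}p_{0}$, and $\varepsilon(t)=\varepsilon_{0}(1-p_{0}t)^{2}$, where $p_{0}=\int d\mu(x)/((a_{0}-x)^{2}+b_{0}^{2}+\varepsilon_{0})$. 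Two consequences undercut your sketch: (i) a characteristic started on the real axis stays on the real axis, so a point $a+ib$ with $b\neq0$ is never ``reachable from a real starting point $a_{0}^{t}(a)$''; the correct starting point is $\lambda_{0}=a_{0}^{t}(a)+ib/2$, lying in the two-dimensional region $\Lambda_{t}=\{T(\lambda_{0})<t\}$; and (ii) for such $\lambda_{0}$ one cannot send $\varepsilon_{0}\rightarrow0$ at all, because the small-$\varepsilon_{0}$ lifetime $T(\lambda_{0})$ is strictly less than $t$, so the Hamilton--Jacobi formulas cease to be valid before time $t$. The paper's key device, absent from your plan, is to keep $\varepsilon_{0}$ strictly positive and tune it, $\varepsilon_{0}=v_{t}(a_{0})^{2}-b_{0}^{2}$, so that $p_{0}=1/t$ exactly; then $\varepsilon(t)=0$ even though $\varepsilon_{0}>0$, and the second Hamilton--Jacobi formula yields $\partial s_{t}/\partial a=\tfrac{2}{t}(a_{0}^{t}(a)-a)$ and $\partial s_{t}/\partial b=b/t$, whence $w_{t}=\tfrac{1}{2\pi t}(da_{0}^{t}/da-\tfrac12)$. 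Sending $\varepsilon_{0}\rightarrow0$ is the correct move only \emph{outside} $\overline{\Omega}_{t}$, starting from $\lambda_{0}=J_{t}^{-1}(\lambda)\notin\overline{\Lambda}_{t}$, where $s_{t}$ comes out harmonic. This two-regime split is the heart of the argument and needs to be added explicitly.

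Two further points where your plan and the paper diverge in substance. First, the regularity step you flag as an obstacle is handled by inverse-function-theorem arguments, and inside $\Omega_{t}$ this only works after the substitution $z=\sqrt{\varepsilon}$ (the map $W_{t}$), because $S$ itself does not extend smoothly in $\varepsilon$ across $\varepsilon=0$ (indeed $\partial S/\partial\varepsilon$ blows up like $1/\sqrt{\varepsilon}$ along the flow); Assumption \ref{notDirac.assumption} enters through Cauchy--Schwarz to make $J_{t}$ injective and the relevant Jacobians nondegenerate, not merely to exclude a collapsed limiting picture. Second, the full-measure statement for $\Omega_{t}$ is not extracted from the characteristic flow: the paper obtains it by identifying $\mathrm{Brown}(x_{0}+i\sigma_{t})$ restricted to $\Omega_{t}$ with the push-forward of $\mathrm{Brown}(x_{0}+c_{t})$ (computed in \cite{HZ}) under the map $U_{t}$, so that $\Omega_{t}$ already carries total mass $1$ and the boundary carries none. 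If you intend to avoid that input, you would need a separate argument that $\partial\Omega_{t}$ is Brown-null.
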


See Figures \ref{bernoullidensity.fig} and \ref{quaddensity.fig}.%

\begin{figure}[ptb]%
\centering
\includegraphics[
height=3.1687in,
width=3.5276in
]%
{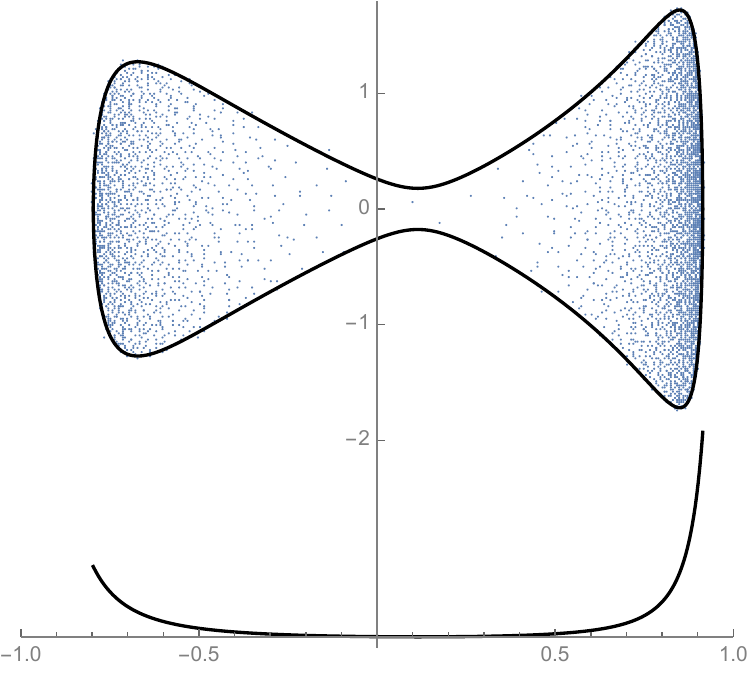}%
\caption{The top of the figure shows the domain $\Omega_{t}$ for the case
$\mu=\frac{1}{3}\delta_{-1}+\frac{2}{3}\delta_{1}$ and $t=1.05,$ together with
a simulation of the corresponding random matrix model. The bottom of the
figure shows the density (in $\Omega_{t}$) of the Brown measure as a function
of $a.$}%
\label{bernoullidensity.fig}%
\end{figure}

\begin{figure}[ptb]%
\centering
\includegraphics[
height=2.284in,
width=3.5276in
]%
{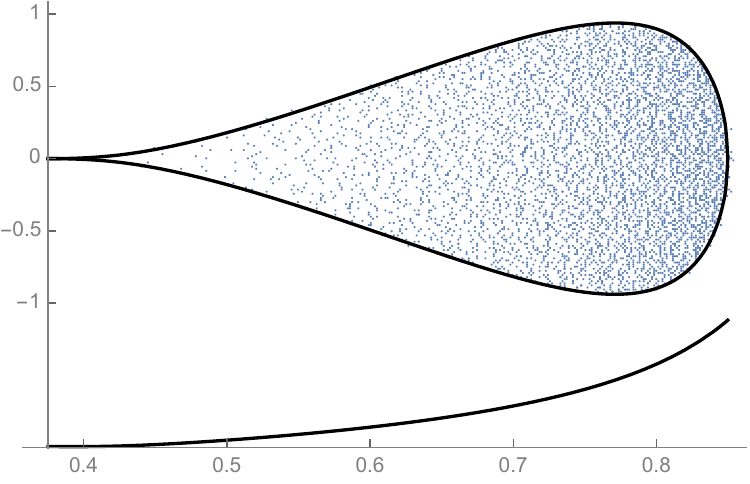}%
\caption{The top of the figure shows the domain $\Omega_{t}$ for the case in
which $\mu$ has density $3x^{2}$ on $[0,1]$ and $t=1/4,$ together with a
simulation of the corresponding random matrix model. The bottom of the figure
shows the density (in $\Omega_{t}$) of the Brown measure as a function of
$a.$}%
\label{quaddensity.fig}%
\end{figure}

We now describe how to compute the functions $b_{t}$ and $a_{0}^{t}$ in
Theorem \ref{intro.thm}. Recall that $\mu$ is the law of $x_{0},$ as in
(\ref{muDef}). We then fix $t>0$ and consider two equations:%
\begin{align}
\int_{\mathbb{R}}\frac{1}{(a_{0}-x)^{2}+v^{2}}~d\mu(x)  &  =\frac{1}%
{t}\label{p0EqIntro}\\
\int_{\mathbb{R}}\frac{x}{(a_{0}-x)^{2}+v^{2}}~d\mu(x)  &  =\frac{a}{t},
\label{p1EqIntro}%
\end{align}
where we look for a solution with $v>0$ and $a_{0}\in\mathbb{R}.$ We will show
in Section \ref{sect:surjectivity} that there can be at most one such pair
$(v,a_{0})$ for each $a\in\mathbb{R}.$ If, for a given $a\in\mathbb{R},$ we
can find $v>0$ and $a_{0}\in\mathbb{R}$ solving these equations, we set%
\begin{equation}
a_{0}^{t}(a)=a_{0} \label{aotDef}%
\end{equation}
and%
\begin{equation}
b_{t}(a)=2v. \label{btDef}%
\end{equation}
If, on the other hand, no solution exists, we set $b_{t}(a)=0$ and leave
$a_{0}^{t}(a)$ undefined. (If $b_{t}(a)=0$, there are no points of the form
$a+ib$ in $\Omega_{t}$ and so the density of the Brown measure is undefined.)

The equations (\ref{p0EqIntro}) and (\ref{p1EqIntro}) can be solved explicitly
for some simple choices of $\mu$, as shown in Section \ref{examples.sec}. For
any reasonable choice of $\mu,$ the equations can be easily solved numerically.

We now explain a connection between the Brown measure of $x_{0}+i\sigma_{t}$
and two other models. In addition to the semicircular Brownian motion
$\sigma_{t},$ we consider also a circular Brownian motion $c_{t}.$ This may be
constructed as%
\[
c_{t}=\sigma_{t/2}+i\tilde{\sigma}_{t/2},
\]
where $\sigma_{\cdot}$ and $\tilde{\sigma}_{\cdot}$ are two freely independent
semicircular Brownian motions. We now describe a remarkable direct connection
between the Brown measure of $x_{0}+i\sigma_{t}$ and the Brown measure of
$x_{0}+c_{t},$ and a similar direct connection between the the Brown measure
of $x_{0}+i\sigma_{t}$ and the law of $x_{0}+\sigma_{t}.$ We remark that a
fascinating indication of a connection between the behavior of $x_{0}%
+\sigma_{t}$ and the behavior of $x_{0}+i\sigma_{t}$ were given previously in
the work of Janik, Nowak, Papp, Wambach, and Zahed, discussed in Section
\ref{jnpwz.sec}. Note that since $\sigma_{t}$ has the same law as
$\sigma_{t/2}+\tilde{\sigma}_{t/2},$ we can describe the three random
variables in question as
\begin{align*}
x_{0}+\sigma_{t}  &  \equiv x_{0}+\sigma_{t/2}+\tilde{\sigma}_{t/2}\\
x_{0}+c_{t}  &  \equiv x_{0}+\sigma_{t/2}+i\tilde{\sigma}_{t/2}\\
x_{0}+i\sigma_{t}  &  \equiv x_{0}+i\sigma_{t/2}+i\tilde{\sigma}_{t/2},
\end{align*}
where the notation $A\equiv B$ means that $A$ and $B$ have the same $\ast
$-distribution and therefore the same Brown measure.

The Brown measure of $x_{0}+c_{t}$ was computed by the second author and Zhong
in \cite{HZ}. They also established that the Brown measure of $x_{0}+c_{t}$ is
related to the law of $x_{0}+\sigma_{t}$. We then show that the Brown measure
of $x_{0}+i\sigma_{t}$ is related to the Brown measure of $x_{0}+c_{t}.$ By
combining our this last result with what was shown in \cite[Prop. 3.14]{HZ},
we obtain the following result.

\begin{theorem}
The Brown measure of $x_{0}+c_{t}$ is supported in the closure of a certain
domain $\Lambda_{t}$ identified in \cite{HZ}. There is a homeomorphism $U_{t}$
of $\overline{\Lambda}_{t}$ onto $\overline{\Omega}_{t}$ with the property
that the push-forward of $\mathrm{Brown}(x_{0}+c_{t})$ under $U_{t}$ is equal
to $\mathrm{Brown}(x_{0}+i\sigma_{t}).$ Furthermore, there is a continuous map
$Q_{t}:\overline{\Omega}_{t}\rightarrow\mathbb{R}$ such that the push-forward
of $\mathrm{Brown}(x_{0}+i\sigma_{t})$ under $Q_{t}$ is the law of
$x_{0}+\sigma_{t},$ as computed by Biane.
\end{theorem}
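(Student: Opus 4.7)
The plan is to assemble the theorem as a composition of two ingredients: the homeomorphism $U_t:\overline{\Lambda}_t\to\overline{\Omega}_t$ constructed in the body of this paper (the result referenced immediately before the theorem), and the projection $\pi_t:\overline{\Lambda}_t\to\mathbb{R}$ furnished by \cite[Prop.~3.14]{HZ}, for which $(\pi_t)_\ast\mathrm{Brown}(x_0+c_t)=\mathrm{Law}(x_0+\sigma_t)$. Both $U_t$ and $\pi_t$ are treated as black boxes in the proof of the present theorem; all the analytic work lives in their construction, and what remains is formal.

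For the homeomorphism $U_t$, my approach would be to exhibit both $\Omega_t$ and $\Lambda_t$ as fibre bundles over a common one-dimensional parameter space. By Theorem \ref{intro.thm}, $\Omega_t$ is foliated by the vertical segments $\{a+ib:|b|<b_t(a)\}$, each of which is naturally labelled by the pair $(a_0^t(a),b_t(a)/2)$ solving \eqref{p0EqIntro}--\eqref{p1EqIntro}; single-valuedness of this label comes from Section \ref{sect:surjectivity}. The parallel description of $\Lambda_t$ in \cite{HZ} foliates $\Lambda_t$ using the same parameter $(a_0,v)$, which is nothing but the Biane subordination data for $x_0+\sigma_t$. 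One then defines $U_t$ leaf by leaf: send the leaf of $\Lambda_t$ labelled by $(a_0,v)$ onto the vertical segment of $\Omega_t$ labelled by the same $(a_0,v)$, with the fibre reparameterization chosen so that the Brown densities, which are constant along fibres on each side by the respective structure theorems, match. Continuous extension of the two parameterizations to the boundary then upgrades this to a homeomorphism of the closures.

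With $U_t$ available, set $Q_t:=\pi_t\circ U_t^{-1}:\overline{\Omega}_t\to\mathbb{R}$. Continuity is immediate, and functoriality of push-forward gives
\[
(Q_t)_\ast\mathrm{Brown}(x_0+i\sigma_t)=(\pi_t)_\ast(U_t^{-1})_\ast(U_t)_\ast\mathrm{Brown}(x_0+c_t)=(\pi_t)_\ast\mathrm{Brown}(x_0+c_t)=\mathrm{Law}(x_0+\sigma_t),
\]
proving the second half of the theorem. The opening claim, that $\mathrm{Law}(x_0+\sigma_t)$ is supported in the closure of the HZ domain, is then automatic, since any push-forward is supported in the image of the support of the source measure. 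Matching real and imaginary parts in the Biane subordination identity $z-\omega=tG_\mu(\omega)$ against \eqref{p0EqIntro}--\eqref{p1EqIntro} identifies the subordination root as $\omega(z)=a_0^t(a)+iv$ and forces $z=2a_0^t(a)-a$, suggesting the concrete formula $Q_t(a+ib)=2a_0^t(a)-a$. The main obstacle is the construction of $U_t$, specifically the simultaneous control of the two $(a_0,v)$-parameterizations up to and including the boundary, where solutions of \eqref{p0EqIntro}--\eqref{p1EqIntro} degenerate; once the homeomorphism is established, the rest of the proof is purely functorial.
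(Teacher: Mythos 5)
Your overall architecture is the same as the paper's: $U_t$ is the leaf-preserving map that acts linearly on vertical fibres (Theorem \ref{thm:lambda0tolambda}), $Q_t$ is the HZ projection composed with $U_t^{-1}$, and the second half of the theorem is pure functoriality together with \cite[Thm. 3.13]{HZ}; your guessed formula $Q_t(a+ib)=2a_0^t(a)-a$ is also the one the paper derives. The genuine gap is in the one step that carries all the content, namely that $(U_t)_*\mathrm{Brown}(x_0+c_t)=\mathrm{Brown}(x_0+i\sigma_t)$. You propose to secure this by ``choosing the fibre reparameterization so that the Brown densities match,'' but once the leaf correspondence is fixed by the common label (i.e.\ $a=a_t(a_0)$ and, by (\ref{eq:btDef}), $b_t(a)=2v_t(a_0)$), there is no freedom left: constancy of both densities along fibres forces each fibre map to be affine and onto, i.e.\ $b_0\mapsto \pm 2b_0$, so the push-forward is completely determined and its agreement with $w_t$ is an identity to be \emph{verified}, not arranged. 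Verifying it is precisely the computation in the paper's proof of Theorem \ref{push.thm}: one inserts the explicit density of $\mathrm{Brown}(x_0+c_t)$ from \cite[Thm. 3.9]{HZ}, $d\rho_t=\frac{1}{\pi t}\bigl(1-\frac{1}{2}\frac{da_t}{da_0}\bigr)\,da_0\,db_0$, changes variables with Jacobian $\frac{da_0^t}{da}\cdot\frac{1}{2}$, and uses $\frac{da_t}{da_0}\,\frac{da_0^t}{da}=1$ to recover $\frac{1}{2\pi t}\bigl(\frac{da_0^t}{da}-\frac{1}{2}\bigr)$, the density of Theorem \ref{thm:main}. Without quoting HZ's density formula and carrying out this comparison, the push-forward claim is asserted rather than proved.

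Two further points. First, matching densities only shows the two measures agree on the open set $\Omega_t$; to conclude that they are equal you must also know that $\mathrm{Brown}(x_0+i\sigma_t)$ assigns no mass to $\partial\Omega_t$. The paper extracts this from the same computation by a mass count: $\rho_t(\Lambda_t)=1$, so the push-forward already has total mass $1$ inside $\Omega_t$, forcing the boundary mass to vanish. You cannot instead invoke the full-measure assertion of Theorem \ref{intro.thm}, since in the paper that assertion is itself deduced from this push-forward argument. Second, your reading of the opening sentence is off: the statement that the relevant Brown measure is supported in $\overline{\Lambda}_t$ is an input quoted from \cite{HZ} (it concerns $x_0+c_t$), not something that becomes ``automatic'' from your construction; what your functorial argument does give is that the law of $x_0+\sigma_t$ is supported in $Q_t(\overline{\Omega}_t)\subset\mathbb{R}$. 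Finally, the boundary-extension issue you flag for $U_t$ is real but is handled in the paper by Proposition \ref{Jprops.prop} (continuity, strict monotonicity, and surjectivity of $a_t$, plus (\ref{eq:btDef})), under the standing Assumption \ref{notDirac.assumption}.
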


The maps $U_{t}$ and $Q_{t}$ are described in Sections \ref{sect:surjectivity}
and \ref{pushforward.sec}, respectively. The map $U_{t}$ has the property that
vertical line segments in $\overline{\Lambda}_{t}$ map linearly to vertical
line segments in $\overline{\Omega}_{t},$ while the map $Q_{t}$ has the
property that vertical line segments in $\overline{\Omega}_{t}$ map to single
points in $\mathbb{R}.$ (See Figures \ref{utmap.fig} and \ref{qmap.fig}.) The
map $Q_{t}$ is computed by first applying the inverse of the map $U_{t}$ and
then applying the map denoted as $\Psi_{t}$ in Point 3 of Theorem 1.1 in
\cite{HZ}.%

\begin{figure}[ptb]%
\centering
\includegraphics[
height=2.9032in,
width=4.3448in
]%
{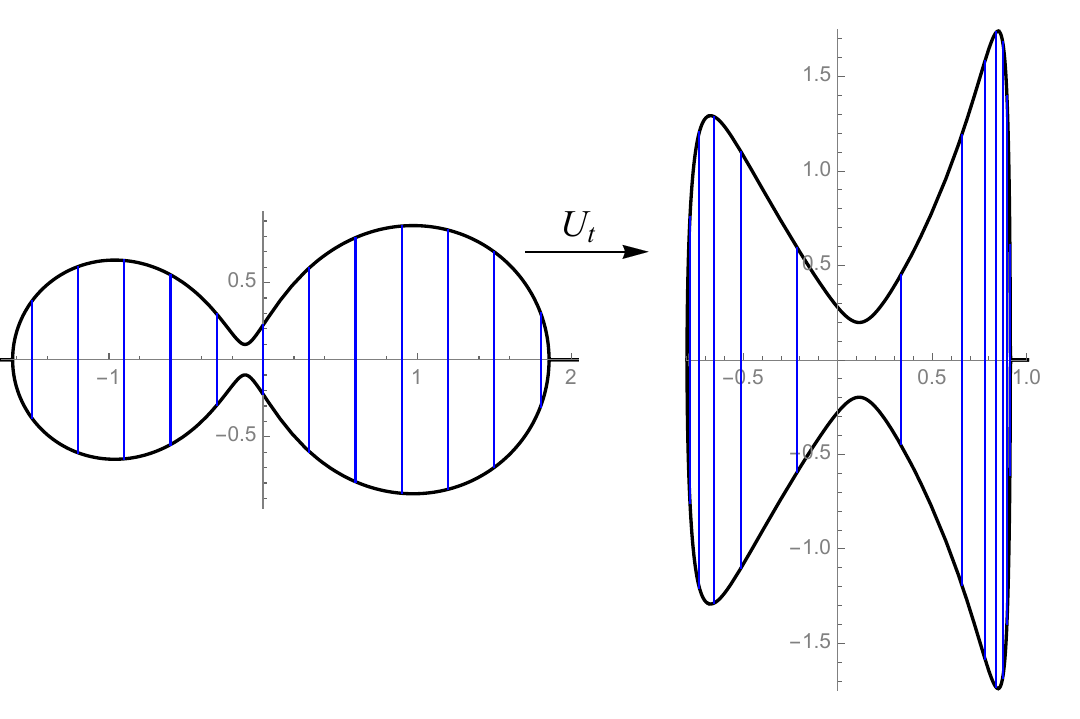}%
\caption{A visualization of the map $U_{t}:\overline{\Lambda}_{t}%
\rightarrow\overline{\Omega}_{t}.$ The map takes vertical segments in
$\Lambda_{t}$ linearly to vertical segments in $\Omega_{t}.$ Shown for
$\mu=\frac{1}{3}\delta_{-1}+\frac{2}{3}\delta_{1}$ and~$t=1.05.$}%
\label{utmap.fig}%
\end{figure}

\begin{figure}[ptb]%
\centering
\includegraphics[
height=2.7769in,
width=4.3811in
]%
{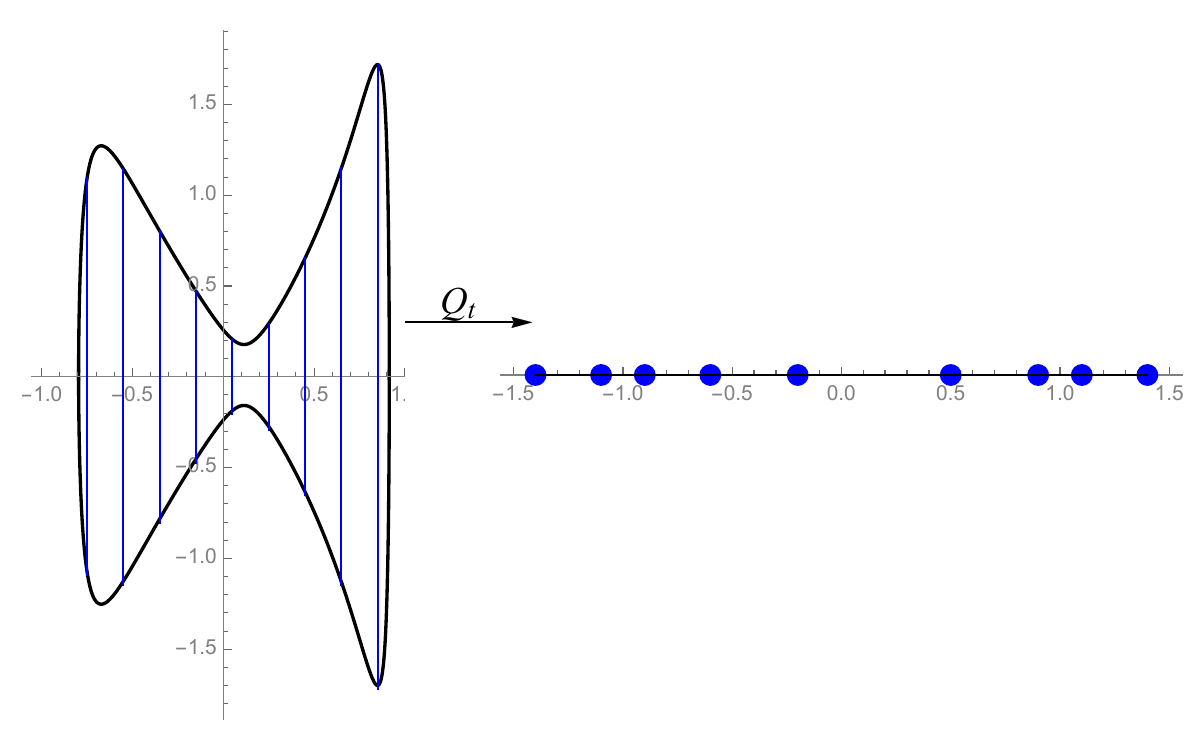}%
\caption{A visualization of the map $Q_{t}:\overline{\Omega}_{t}%
\rightarrow\mathbb{R}.$ The map takes vertical segments in $\overline{\Omega
}_{t}$ to single points in $\mathbb{R}.$ Shown for $\mu=\frac{1}{3}\delta
_{-1}+\frac{2}{3}\delta_{1}$ and~$t=1.05.$}%
\label{qmap.fig}%
\end{figure}

\subsection{Method of proof}

Our proofs are based on the PDE method developed in \cite{DHKBrown} and used
also in \cite{HZ} and \cite{DH}. (See also \cite{PDEmethods} for a gentle
introduction to the method.) For any operator $A$ in a tracial von Neumann
algebra $(\mathcal{A},\tau),$ the Brown measure of $A,$ denoted
$\mathrm{Brown}(A),$ may be computed as follows. (See Section \ref{Brown.sec}
for more details.) Let%
\[
S(\lambda,\varepsilon)=\tau\lbrack\log((A-\lambda)^{\ast}(A-\lambda
)+\varepsilon)]
\]
for $\varepsilon>0.$ Then the limit%
\[
s(\lambda):=\lim_{\varepsilon\rightarrow0^{+}}S(\lambda,\varepsilon)
\]
exists as a subharmonic function. The Brown measure is then defined as%
\[
\mathrm{Brown}(A)=\frac{1}{4\pi}\Delta s,
\]
where the Laplacian is computed in the distributional sense. The general
theory then guarantees that $\mathrm{Brown}(A)$ is a probability measure
supported on the spectrum of $A.$ (The closed support of $\mathrm{Brown}(A)$
can be a proper subset of the spectrum of $A.$)

In our case, we take $A=x_{0}+i\sigma_{t}$, so that $S$ also depends on $t.$
Thus, we consider the functions%
\begin{equation}
S(t,\lambda,\varepsilon)=\tau\lbrack\log((x_{0}+i\sigma_{t}-\lambda)^{\ast
}(x_{0}+i\sigma_{t}-\lambda)+\varepsilon)] \label{Sintro}%
\end{equation}
and%
\[
s_{t}(\lambda)=\lim_{\varepsilon\rightarrow0^{+}}S(t,\lambda,\varepsilon).
\]
Then%
\[
\mathrm{Brown}(x_{0}+i\sigma_{t})=\frac{1}{4\pi}\Delta s_{t}(\lambda),
\]
where the Laplacian is taken with respect to $\lambda$ with $t$ fixed.

Our first main result (Theorem \ref{PDE.thm}) is that the function $S$ in
(\ref{Sintro}) satisfies a first-order nonlinear PDE of Hamilton--Jacobi type,
given in Theorem \ref{PDE.thm}. Our goal is then to solve the PDE for
$S(t,\lambda,\varepsilon)$, evaluate the solution in the limit $\varepsilon
\rightarrow0,$ and then take the Laplacian with respect to $\lambda.$ We use
two different approaches to this goal, one approach outside a certain domain
$\Omega_{t}$ and a different approach inside $\Omega_{t},$ where the Brown
measure turns out to be zero outside $\Omega_{t}$ and nonzero inside
$\Omega_{t}.$ See Sections \ref{outside.sec} and \ref{inside.sec}.

\subsection{Comparison to previous results\label{previous.sec}}

A different approach to the problem was previously developed in the physics
literature by Jarosz and Nowak \cite{JN1,JN2}. Using linearization and
subordination functions, they propose an algorithm for computing the Brown
measure of $H_{1}+iH_{2},$ where $H_{1}$ and $H_{2}$ are arbitrary freely
independent Hermitian elements. (See, specifically, Eqs. (75)--(80) in
\cite{JN2}.) Section 6 of \cite{JN1} presents examples in which one of $H_{1}$
and $H_{2}$ is semicircular and the other has various distributions.

Although the method of \cite{JN1,JN2} is not rigorous as written, it is
possible that the strategy used there could be made rigorous using the
general\ framework developed by Belinschi, Mai, and Speicher \cite{BMS}. (See,
specifically, the very general algorithm in Section 4 of \cite{BMS}. See also
\cite{BSS} for further rigorous developments in this direction.) We emphasize,
however, that it would require considerable effort to get analytic results for
the $H_{1}+iH_{2}$ case from the general algorithm of \cite{BMS}. In any case,
we show in Section \ref{jn.sec} that our results are compatible with those
obtained by the algorithm of Jarosz and Nowak.

In addition to presenting a rigorous argument, we provide information about
the Brown measure of $x_{0}+i\sigma_{t}$ that is not found in \cite{JN1,JN2}.
First, we highlight the crucial result that the density of the Brown measure,
inside its support, is always constant in the vertical direction. Although
this result certainly follows from the algorithm of Jarosz and Nowak (and is
reflected in the examples in \cite[Sect. 6]{JN1}), it is not explicitly stated
in their work. Second, we give significantly more explicit formulas for the
support of the Brown measure and for its density when $x_{0}$ is arbitrary.
Third, we obtain (Section \ref{pushforward.sec}) a direct relationship between
the Brown measure of $x_{0}+i\sigma_{t}$ and the distribution of $x_{0}%
+\sigma_{t}$ that is not found in \cite{JN1} or \cite{JN2}.

Meanwhile, in Section \ref{domain.sec}, we also confirm a separate,
nonrigorous argument of Janik, Nowak, Papp, Wambach, and Zahed predicting the
domain on which the Brown measure is supported.

Finally, as mentioned previously, Section 3 of the paper \cite{HZ} of the
second author and Zhong computed the Brown measure of $y_{0}+c_{t},$ where
$c_{t}$ is the free circular Brownian motion (large-$N$ limit of the Ginibre
ensemble). Now, $c_{t}$ can be constructed as $c_{t}=\tilde{\sigma}%
_{t/2}+i\sigma_{t/2},$ where $\sigma_{\cdot}$ and $\tilde{\sigma}_{\cdot}$ are
two freely independent semicircular Brownian motions. Thus, the results of the
present paper in the case where $x_{0}$ is the sum of a self-adjoint element
$y_{0}$ and a freely independent semicircular element fall under the results
of \cite{HZ}. But actually, the connection between the present paper and
\cite{HZ} is deeper than that. For any choice of $x_{0},$ the region
$\Lambda_{t}$ in which the Brown measure of $x_{0}+c_{t}$ is supported shows
up in the computation of the Brown measure of $x_{0}+i\sigma_{t},$ as the
\textquotedblleft domain in the $\lambda_{0}$-plane\textquotedblright%
\ (Section \ref{lambda0Domain.sec}). And then we show that the Brown measure
of $x_{0}+i\sigma_{t}$ is the pushforward of the Brown measure of $x_{0}%
+c_{t}$ under a certain map (Section \ref{pushforward.sec}). Thus, one of the
notable aspect of the results of the present paper is the way they illuminate
the deep connections between $x_{0}+c_{t}$ and $x_{0}+i\sigma_{t}$.

\section{The Brown measure formalism\label{Brown.sec}}

We present here general results about the Brown measure. For more information,
the reader is referred to the original paper \cite{Br} of Brown and to Chapter
11 of the monograph of Mingo and Speicher \cite{MS}.

Let $(\mathcal{A},\tau)$ be a \textbf{tracial von Neumann algebra}, that is, a
finite von Neumann algebra $\mathcal{A}$ with a faithful, normal, tracial
state $\tau:\mathcal{A}\rightarrow\mathbb{C}.$ Thus, $\tau$ is a norm-1 linear
functional with the properties that $\tau(A^{\ast}A)>0$ for all nonzero
elements of $\mathcal{A}$ and that $\tau(AB)=\tau(BA)$ for all $A,B\in
\mathcal{A}.$ For any $A\in\mathcal{A},$ we define a function $S$ by
\[
S(\lambda,\varepsilon)=\tau\lbrack\log((A-\lambda)^{\ast}(A-\lambda
)+\varepsilon)],\quad\lambda\in\mathbb{C},~\varepsilon>0.
\]
It is known that
\[
s(\lambda):=\lim_{\varepsilon\rightarrow0^{+}}S(\lambda,\varepsilon)
\]
exists as a subharmonic function on $\mathbb{C}.$ Then the \textbf{Brown
measure} of $A$ is defined in terms of the distributional Laplacian of $s$:%
\[
\mathrm{Brown}(A)=\frac{1}{4\pi}\Delta s.
\]

The motivation for this definition comes from the case in which $\mathcal{A}$
is the algebra of all $N\times N$ matrices and $\tau$ is the normalized trace
($1/N$ time ordinary trace). In this case, if $A$ has eigenvalues $\lambda
_{1},\ldots,\lambda_{N}$ (counted with their algebraic multiplicities), then
the function $s$ may be computed as%
\[
s(\lambda)=\frac{2}{N}\sum_{j=1}^{N}\log\left\vert \lambda-\lambda
_{j}\right\vert .
\]
That is to say, $s$ is $2/N$ time the logarithm of the absolute value of the
characteristic polynomial of $A.$ Since $\frac{1}{2\pi}\log\left\vert
\lambda\right\vert $ is the Green's function for the Laplacian on the plane,
we find that%
\[
\mathrm{Brown}(A)=\frac{1}{N}\sum_{j=1}^{N}\delta_{\lambda_{j}}.
\]
Thus, the Brown measure of a matrix is just its empirical eigenvalue distribution.

If a sequence of random matrices $A^{N}$ converges in $\ast$-distribution to
an element $A$ in a tracial von Neumann algebra, one generally expects that
the empirical eigenvalue distribution of $A^{N}$ will converge almost surely
the Brown measure of $A.$ But such a result does not \textit{always} hold and
it is a hard technical problem to prove that it does in specific examples.
Works of Girko \cite{GirkoCircular}, Bai \cite{Bai}, and Tao and Vu \cite{TV}
(among others)\ on the circular law provide techniques for establish such
convergence results, while a somewhat different approach to such problems was
developed by Guionnet, Krishnapur, and Zeitouni \cite{GKZsingleRing}.

\section{The differential equation for $S$}

Let $\sigma_{t}$ be a free semicircular Brownian motion and let $x_{0}$ be a
Hermitian element freely independent of each $\sigma_{t},$ $t>0$. The main
result of this section is the following.

\begin{theorem}
\label{PDE.thm}Let
\[
S(t,\lambda,\varepsilon)=\tau\lbrack\log((x_{0}+i\sigma_{t}-\lambda)^{\ast
}(x_{0}+i\sigma_{t}-\lambda)+\varepsilon)]\quad\lambda\in\mathbb{C}%
,~\varepsilon>0
\]
and write $\lambda$ as $\lambda=a+ib$ with $a,b\in\mathbb{R}.$ Then the
function $S$ satisfies the PDE
\begin{equation}
\frac{\partial S}{\partial t}=\frac{1}{4}\left(  \left(  \frac{\partial
S}{\partial a}\right)  ^{2}-\left(  \frac{\partial S}{\partial b}\right)
^{2}\right)  +\varepsilon\left(  \frac{\partial S}{\partial\varepsilon
}\right)  ^{2} \label{thePDE}%
\end{equation}
subject to the initial condition%
\[
S(0,\lambda,\varepsilon)=\tau\lbrack\log((x_{0}-\lambda)^{\ast}(x_{0}%
-\lambda)+\varepsilon)].
\]

\end{theorem}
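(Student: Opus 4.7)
The plan is to differentiate $S$ in $t$ via the free It\^o formula for the semicircular Brownian motion and reorganize the result into the combination of spatial derivatives on the right-hand side. Set $Z = x_{0} + i\sigma_{t} - \lambda$, $P = Z^{\ast}Z + \varepsilon$, and $R = P^{-1}$, so that $S = \tau[\log P]$. Direct differentiation under the trace yields $\partial_{\varepsilon}S = \tau[R]$, $\partial_{a}S = -\tau[R(Z+Z^{\ast})]$, and $\partial_{b}S = i\tau[R(Z-Z^{\ast})]$; equivalently, $\tau[RZ] = -\tfrac{1}{2}(\partial_{a}S+i\partial_{b}S)$ and $\tau[RZ^{\ast}] = -\tfrac{1}{2}(\partial_{a}S-i\partial_{b}S)$. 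A short algebraic manipulation then gives
\[
\tfrac{1}{4}\bigl((\partial_{a}S)^{2}-(\partial_{b}S)^{2}\bigr) + \varepsilon(\partial_{\varepsilon}S)^{2} = \tfrac{1}{2}\bigl(\tau[RZ]^{2}+\tau[RZ^{\ast}]^{2}\bigr) + \varepsilon\,\tau[R]^{2},
\]
so it suffices to prove $\partial_{t}S = \tfrac{1}{2}(\tau[RZ]^{2}+\tau[RZ^{\ast}]^{2}) + \varepsilon\,\tau[R]^{2}$.

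For the time derivative I would use the integral representation $\log P = \int_{0}^{\infty}[(1+u)^{-1}-(P+u)^{-1}]\,du$ and reduce to computing $\partial_{t}\tau[R_{u}]$ for $R_{u} = (P+u)^{-1}$. The free stochastic calculus gives $dZ = i\,d\sigma_{t}$, $dZ^{\ast} = -i\,d\sigma_{t}$, together with the covariation rule $d\sigma_{t}\cdot A\cdot d\sigma_{t} = \tau(A)\,dt$; these combine into
\[
dP_{t} = i(Z^{\ast}d\sigma_{t} - d\sigma_{t}Z) + dt.
\]
The second-order It\^o expansion $dR_{u} = -R_{u}(dP_{t})R_{u} + R_{u}(dP_{t})R_{u}(dP_{t})R_{u}$ then yields a drift from both the $dt$ piece of $dP_{t}$ and the four noncommutative cross terms in $(dP_{t})R_{u}(dP_{t})$, each of which collapses via $d\sigma_{t}\cdot\bullet\cdot d\sigma_{t} = \tau(\bullet)\,dt$ into a scalar coefficient times an operator. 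Taking $\tau$, using cyclicity, and invoking the identities $R_{u}(Z^{\ast}Z) = 1-(u+\varepsilon)R_{u}$ and $\tau[R_{u}^{2}A] = -\partial_{u}\tau[R_{u}A]$, one should obtain
\[
\partial_{t}\tau[R_{u}] = \tfrac{1}{2}\partial_{u}\!\left(\tau[R_{u}Z]^{2}\right) + \tfrac{1}{2}\partial_{u}\!\left(\tau[R_{u}Z^{\ast}]^{2}\right) + \tau[R_{u}]^{2} + (u+\varepsilon)\,\partial_{u}\!\left(\tau[R_{u}]^{2}\right).
\]
Integrating $-\int_{0}^{\infty}(\cdots)\,du$ to recover $\partial_{t}S$, the first two terms contribute the boundary values $\tfrac{1}{2}\tau[RZ]^{2}$ and $\tfrac{1}{2}\tau[RZ^{\ast}]^{2}$ at $u=0$, while integration by parts on the fourth term produces $\varepsilon\,\tau[R]^{2}$ plus a term that cancels exactly against the $-\int_{0}^{\infty}\tau[R_{u}]^{2}\,du$ coming from the third piece.

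The main obstacle is the careful bookkeeping of the four noncommutative terms $(Z^{\ast}d\sigma)R_{u}(Z^{\ast}d\sigma)$, $(Z^{\ast}d\sigma)R_{u}(d\sigma Z)$, $(d\sigma Z)R_{u}(Z^{\ast}d\sigma)$, $(d\sigma Z)R_{u}(d\sigma Z)$ arising in $(dP_{t})R_{u}(dP_{t})$, and verifying that after contraction, tracing, and integration in $u$ they combine cleanly. The key structural feature enabling this is that $dP_{t}$ decomposes into a ``separable'' martingale part $i(Z^{\ast}d\sigma_{t} - d\sigma_{t}Z)$, with $Z^{\ast}$ sitting entirely to the left of $d\sigma_{t}$ and $Z$ entirely to the right, plus the scalar drift $dt$; this separation lets the trace contractions collapse into squares $\tau[RZ]^{2}$, $\tau[RZ^{\ast}]^{2}$, and $\tau[R]^{2}$ rather than genuinely two-resolvent expressions, and the apparently problematic $\int_{0}^{\infty}\tau[R_{u}]^{2}\,du$ contribution drops out of the final answer, leaving the clean Hamilton--Jacobi form (\ref{thePDE}).
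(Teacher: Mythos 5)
Your proposal is correct, and it reaches the paper's key intermediate identity (Proposition \ref{DS.Dt}, i.e.\ $\partial_{t}S=\tfrac12\tau[RZ]^{2}+\tfrac12\tau[RZ^{*}]^{2}+\varepsilon\tau[R]^{2}$) by a genuinely different route. The paper expands $\log(Z^{*}Z+\varepsilon)$ as a power series in $1/\varepsilon$, which is only valid for $\varepsilon>\Vert Z^{*}Z\Vert$, computes $\partial_{t}\tau[(Z^{*}Z)^{n}]$ by a combinatorial count of It\^{o} terms (Lemma \ref{Dt:moments}), resums into resolvents, and then needs a separate analyticity-in-$\varepsilon$ argument (via the time-integrated identity (\ref{eq:SIntegral})) to extend to all $\varepsilon>0$. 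You instead use the representation $\log P=\int_{0}^{\infty}[(1+u)^{-1}-(P+u)^{-1}]\,du$, apply the second-order It\^{o} expansion directly to the resolvent $R_{u}$, and exploit the total-$u$-derivative structure: I checked your displayed formula for $\partial_{t}\tau[R_{u}]$ and it is correct, and integrating in $u$ with the boundary terms at $u=0$ (the terms at $u=\infty$ vanish since $\Vert R_{u}\Vert\leq(u+\varepsilon)^{-1}$) indeed produces $\tfrac12\tau[RZ]^{2}+\tfrac12\tau[RZ^{*}]^{2}+\varepsilon\tau[R]^{2}$, with the $\int_{0}^{\infty}\tau[R_{u}]^{2}\,du$ contributions cancelling as you say; the conversion to $\tfrac14((\partial_{a}S)^{2}-(\partial_{b}S)^{2})+\varepsilon(\partial_{\varepsilon}S)^{2}$ via $\tau[RZ]=-\tfrac12(\partial_{a}S+i\partial_{b}S)$ also checks out and matches Lemma \ref{Deri.Formulas}. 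What your approach buys is that it works at every $\varepsilon>0$ from the start, so no analytic continuation in $\varepsilon$ and no term-counting combinatorics are needed. What it requires, and what you should spell out to make it fully rigorous, are two routine points: (i) the It\^{o} formula for the resolvent, which can be obtained from $d\bigl(R_{u}(P+u)\bigr)=0$, giving $dR_{u}=-R_{u}\,dP\,R_{u}-dR_{u}\,dP\,R_{u}$, iterating once and using that terms with three or more $d\sigma_{t}$'s vanish (the same facts as in Theorem \ref{thm:Ito}); and (ii) the interchange of $\partial_{t}$ with $\int_{0}^{\infty}du$, justified by the uniform bounds $\Vert R_{u}\Vert\leq(u+\varepsilon)^{-1}$ and local boundedness of $\Vert Z\Vert$ in $t$. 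Also note that the derivative formulas for $\partial_{a}S$, $\partial_{b}S$, $\partial_{\varepsilon}S$ are not literally ``direct differentiation under the trace'' since $P$ and $\partial P$ do not commute; as in the paper, they follow from Brown's lemma on differentiating $\tau[\log f(u)]$, which is what makes the trace formulas valid.
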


We use the notation
\begin{align*}
x_{t}  &  :=x_{0}+i\sigma_{t}\\
x_{t,\lambda}  &  :=x_{t}-\lambda.
\end{align*}
Then the free SDE's of $x_{t,\lambda}$ and $x_{t,\lambda}^{\ast}$ are
\begin{equation}
dx_{t,\lambda}=i\,d\sigma_{t},\qquad dx_{t,\lambda}^{\ast}=-i\,d\sigma_{t}.
\label{eq:dxtlambda}%
\end{equation}

The main tool of this section is the free It\^{o} formula. The following
theorem is a simpler form of Theorem 4.1.2 of \cite{BS1} which states the free
It\^{o} formula. The form of the It\^{o} formula used here is similar to what
is in Lemma 2.5 and Lemma 4.3 of \cite{KempLargeN}. For a \textquotedblleft
functional\textquotedblright\ form of these free It\^{o} formulas, see Section
4.3 of \cite{Nik}.

\begin{theorem}
\label{thm:Ito} Let $(\mathcal{A}_{t})_{t\geq0}$ be a filtration such that
$\sigma_{t}\in\mathcal{A}_{t}$ for all $t$ and $\sigma_{t}-\sigma_{s}$ is free
with $\mathcal{A}_{s}$ for all $s\leq t$. Also let $f_{t}$, $g_{t}$ be two
free It\^{o} processes satisfying the free SDEs
\begin{align}
df_{t}  &  =\sum_{k=1}^{n}a_{t}^{k}\,~d\sigma_{t}~\,b_{t}^{k}+c_{t}%
\,~dt\label{SDEdft}\\
dg_{t}  &  =\sum_{k=1}^{n}\tilde{a}_{t}^{k}\,~d\sigma_{t}~\,\tilde{b}_{t}%
^{k}+\tilde{c}_{t}\,~dt. \label{SDEdgt}%
\end{align}
for some continuous adapted processes $\{a_{t}^{k},b_{t}^{k},c_{t},\tilde
{a}_{t}^{k},\tilde{b}_{t}^{k},\tilde{c}_{t}\}_{k=1}^{n}.$ Then $f_{t}g_{t}$
satisfies the free SDE
\begin{equation}
d(f_{t}g_{t})=\sum_{k=1}^{n}(a_{t}^{k}\,~d\sigma_{t}~\,b_{t}^{k}g_{t}%
+f_{t}\tilde{a}_{t}^{k}~\,d\sigma_{t}~\,\tilde{b}_{t}^{k})+\left(  c_{t}%
g_{t}+f_{t}\tilde{c}_{t}+\sum_{j,k=1}^{n}\tau\lbrack b_{t}^{k}\tilde{a}%
_{t}^{j}]a_{t}^{k}\tilde{b}_{t}^{j}\right)  \,dt. \label{Dftgt}%
\end{equation}
That is, $d(f_{t}g_{t})$ can be informally computed using the free It\^{o}
product rule:
\[
d(f_{t}g_{t})=df_{t}\,g_{t}+f_{t}\,dg_{t}+df_{t}~dg_{t},
\]
where $df_{t}~dg_{t}$ is computed using the rules
\begin{align}
\,d\sigma_{t}~\,\theta_{t}~\,dt  &  =dt~\theta_{t}~d\sigma_{t}=dt~d\theta
_{t}~dt=0,\label{Ito3}\\
d\sigma_{t}\,~\theta_{t}\,~d\sigma_{t}  &  =\tau\lbrack\theta_{t}]\,dt
\label{Ito.dxdx}%
\end{align}
for any continuous adapted process $\theta_{t}.$

Furthermore, if a process $f_{t}$ satisfies an SDE as in (\ref{SDEdft}), then
$\tau\lbrack f_{t}]$ satisfies%
\[
d\tau\lbrack f_{t}]=\tau\lbrack c_{t}]~dt.
\]
This result can be expressed informally as saying $d$ commutes with $\tau$ and
that
\begin{equation}
\tau\lbrack\theta_{t}~d\sigma_{t}]=0 \label{Ito.dx}%
\end{equation}
for any continuous adapted process $\theta_{t}.$
\end{theorem}

The theorem stated above is applicable to our current situation. Let
$\mathcal{A}_{0}$ be the von Neumann algebra generated by $x_{0}$, and
$\mathcal{B}_{t}$ be the von Neumann algebra generated by $\{\sigma_{r}:r\leq
t\}$. Then we apply Theorem \ref{thm:Ito} with $\mathcal{A}_{t}=\mathcal{A}%
_{0}\ast\mathcal{B}_{t}$, the reduced free product of $\mathcal{A}_{0}$ and
$\mathcal{B}_{t}$.

We shall use the free It\^{o} formula to compute a partial differential
equation that $S$ satisfies. Our strategy is to first do a power series
expansion of the logarithm and then apply the free It\^{o} formula to compute
the partial derivative of the powers of $x_{t,\lambda}^{\ast}x_{t,\lambda}$
with respect to $t$. We start by computing the time derivatives of
$\tau\lbrack(x_{t,\lambda}^{\ast}x_{t,\lambda})^{n}].$

\begin{lemma}
\label{Dt:moments} We have
\begin{equation}
\frac{\partial}{\partial t}\tau\lbrack(x_{t,\lambda}^{\ast}x_{t,\lambda})]=1.
\label{DtN1}%
\end{equation}
When $n\geq2$,
\begin{equation}%
\begin{split}
\frac{\partial}{\partial t}\tau\lbrack(x_{t,\lambda}^{\ast}x_{t,\lambda}%
)^{n}]=  &  -\frac{n}{2}\sum_{m=1}^{n-1}\tau\lbrack x_{t,\lambda}^{\ast
}(x_{t,\lambda}^{\ast}x_{t,\lambda})^{m-1}]\tau\lbrack x_{t,\lambda}^{\ast
}(x_{t,\lambda}^{\ast}x_{t,\lambda})^{n-m-1}]\\
&  -\frac{n}{2}\sum_{m=1}^{n-1}\tau\lbrack x_{t,\lambda}(x_{t,\lambda}^{\ast
}x_{t,\lambda})^{m-1}]\tau\lbrack x_{t,\lambda}(x_{t,\lambda}^{\ast
}x_{t,\lambda})^{n-m-1}]\\
&  +n\sum_{m=1}^{n}\tau\lbrack(x_{t,\lambda}^{\ast}x_{t,\lambda})^{n-m}%
]\tau\lbrack(x_{t,\lambda}^{\ast}x_{t,\lambda})^{m-1}].
\end{split}
\label{DtExpand2}%
\end{equation}

\end{lemma}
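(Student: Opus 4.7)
The plan is to apply the free Itô formula of Theorem~\ref{thm:Ito} directly to the product $(x_{t,\lambda}^{\ast}x_{t,\lambda})^{n}$, viewed as $y_{1}y_{2}\cdots y_{2n}$ with $y_{2r-1}=x_{t,\lambda}^{\ast}$ and $y_{2r}=x_{t,\lambda}$. Iterating the product rule, and using that every term containing three or more factors of $d\sigma_{t}$ vanishes, gives
\[
d(y_{1}\cdots y_{2n})=\sum_{j=1}^{2n}y_{1}\cdots(dy_{j})\cdots y_{2n}+\sum_{1\le j<k\le 2n}y_{1}\cdots(dy_{j})\cdots(dy_{k})\cdots y_{2n}.
\]
Taking $\tau$, the single-$d\sigma_{t}$ terms drop out by \eqref{Ito.dx}. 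For each pair $(j,k)$, the interior block $d\sigma_{t}\cdot(y_{j+1}\cdots y_{k-1})\cdot d\sigma_{t}$ collapses via \eqref{Ito.dxdx} to $\tau[y_{j+1}\cdots y_{k-1}]\,dt$. Since \eqref{eq:dxtlambda} gives $dx_{t,\lambda}=i\,d\sigma_{t}$ and $dx_{t,\lambda}^{\ast}=-i\,d\sigma_{t}$, the pair $(j,k)$ therefore contributes
\[
c_{j,k}\,\tau\bigl[y_{1}\cdots y_{j-1}\,y_{k+1}\cdots y_{2n}\bigr]\,\tau\bigl[y_{j+1}\cdots y_{k-1}\bigr],
\]
with $c_{j,k}=-1$ when $j,k$ have the same parity and $c_{j,k}=+1$ otherwise. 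For $n=1$ only the pair $(1,2)$ remains, the inner and outer blocks are empty, and \eqref{DtN1} follows.

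For $n\ge 2$ I would split the pair sum by parity. In the both-odd case $j=2a-1$, $k=2b-1$ with $1\le a<b\le n$, the two removed factors are $x_{t,\lambda}^{\ast}$, so each block contains one excess $x_{t,\lambda}$; cyclicity of $\tau$ reduces the outer trace to $\tau[x_{t,\lambda}(x_{t,\lambda}^{\ast}x_{t,\lambda})^{n-m-1}]$ and the inner trace to $\tau[x_{t,\lambda}(x_{t,\lambda}^{\ast}x_{t,\lambda})^{m-1}]$, where $m:=b-a$. There are $n-m$ pairs with separation $m$, and symmetrizing under $m\leftrightarrow n-m$ transforms the weight $n-m$ into $n/2$, yielding the second sum in \eqref{DtExpand2}. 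The both-even case $j=2a$, $k=2b$ runs identically with $x_{t,\lambda}^{\ast}$ in place of $x_{t,\lambda}$ and produces the first sum.

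In the mixed case ($c_{j,k}=+1$), both blocks reduce after cyclicity to pure powers of $x_{t,\lambda}^{\ast}x_{t,\lambda}$. The two sub-cases $(j,k)=(2a-1,2b)$ with $1\le a\le b\le n$ and $(j,k)=(2a,2b-1)$ with $1\le a<b\le n$ combine, after reindexing by the power $m-1$ of the second trace, to give weights $n-m+1$ and $n-m$ respectively on $\tau[(x_{t,\lambda}^{\ast}x_{t,\lambda})^{n-m}]\,\tau[(x_{t,\lambda}^{\ast}x_{t,\lambda})^{m-1}]$, for total weight $2n-2m+1$ on each summand $m=1,\ldots,n$. This summand is invariant under $m\leftrightarrow n+1-m$, and averaging $2n-2m+1$ with its image $2m-1$ collapses the weight to the constant $n$, giving the third sum. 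The main obstacle is the combinatorial bookkeeping: one must keep careful track of the starting and ending letters of each block so that cyclicity identifies the outer trace with one of the three target moments, count pairs by the separation $b-a$, and exploit the two-fold symmetry of each summand to replace the asymmetric position weights by the clean constants $n/2$ and $n$.
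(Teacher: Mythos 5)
Your proposal is correct and follows essentially the same route as the paper: apply the free It\^{o} formula, discard the single-$d\sigma_{t}$ terms under the trace via (\ref{Ito.dx}), contract each pair of differentials via (\ref{Ito.dxdx}) into a product of two traces with sign determined by the coefficients $\pm i$, and then count. The only difference is bookkeeping: you count linear pairs by their separation and use the $m\leftrightarrow n-m$ (resp.\ $m\leftrightarrow n+1-m$) symmetry of the summands to flatten the weights to $n/2$ and $n$, whereas the paper sums cyclically over the position of the first differential and corrects for double counting; both yield (\ref{DtExpand2}).
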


\begin{proof}
For $n=1$, we apply the free It\^{o} formula to get
\[
d(x_{t,\lambda}^{\ast}x_{t,\lambda})=x_{t,\lambda}^{\ast}(i\,d\sigma
_{t})+(-i\,d\sigma_{t})x_{t,\lambda}+d\sigma_{t}\cdot d\sigma_{t}%
=ix_{t,\lambda}^{\ast}\,d\sigma_{t}-i\,d\sigma_{t}\,x_{t,\lambda}+dt
\]
which gives (\ref{DtN1}), after taking trace on both sides.

Now, we assume $n\geq2$. When we apply Theorem \ref{thm:Ito} repeatedly to
obtain results for the product of several free It\^{o} processes. When
computing $d\tau\lbrack(x_{t,\lambda}^{\ast}x_{t,\lambda})^{n}],$ we obtain
four types of terms, as follows.

\begin{enumerate}
\item Terms involving only one differential, either of $x_{t,\lambda}^{\ast}$
or of $x_{t,\lambda}$.

\item Terms involving two differentials of $x_{t,\lambda}$.

\item Terms involving two differentials of $x_{t,\lambda}^{*}$.

\item Terms involving a differential of $x_{t,\lambda}^{*}$ and a differential
of $x_{t,\lambda}$.
\end{enumerate}

We now compute $d\tau\lbrack(x_{t,\lambda}^{\ast}x_{t,\lambda})^{n}]$ by
moving the $d$ inside the trace and then applying Theorem \ref{thm:Ito}. By
(\ref{Ito.dx}), the terms in Point 1 will not contribute.

We then consider the terms in Point 2. There are exactly $n$ factors of
$x_{t,\lambda}$ in $(x_{t,\lambda}^{\ast}x_{t,\lambda})^{n}$. Since the terms
in Point 2 involve exactly two $dx_{t,\lambda}$'s, there are precisely
${\binom{n}{2}}$ terms in Point 2. For the purpose of computing these terms,
we label all of the $x_{t,\lambda}$'s by $x_{t,\lambda}^{(k)}$ for
$k=1,\ldots,n$. We view choosing two $x_{t,\lambda}$'s as first choosing an
$x_{t,\lambda}^{(i)}$, then another $x_{t,\lambda}^{(j)}$. We then cyclically
permute the factors until $dx_{t,\lambda}^{(i)}$ is at the beginning. Using
the free stochastic equation (\ref{eq:dxtlambda}) of $x_{t,\lambda}$, this
term has the form
\begin{align*}
&  \tau\lbrack dx_{t,\lambda}^{(i)}\,(x_{t,\lambda}^{\ast}x_{t,\lambda}%
)^{m}x_{t,\lambda}^{\ast}\,dx_{t,\lambda}^{(j)}\,(x_{t,\lambda}^{\ast
}x_{t,\lambda})^{n-m-2}x_{t,\lambda}^{\ast}]\\
&  =-\tau\lbrack(x_{t,\lambda}^{\ast}x_{t,\lambda})^{m}x_{t,\lambda}^{\ast
}]\tau\lbrack(x_{t,\lambda}^{\ast}x_{t,\lambda})^{n-m-2}x_{t,\lambda}^{\ast
}]~dt
\end{align*}
where $m=j-i-1\operatorname{mod}n$ and we omit the labeling of all
$x_{t,\lambda}$'s except $x_{t,\lambda}^{(i)}$ and $x_{t,\lambda}^{(j)}$.

If we then sum over all $j\neq i,$ we obtain
\[
-\sum_{m=0}^{n-2}\tau\lbrack x_{t,\lambda}^{\ast}(x_{t,\lambda}^{\ast
}x_{t,\lambda})^{m}]\tau\lbrack x_{t,\lambda}^{\ast}(x_{t,\lambda}^{\ast
}x_{t,\lambda})^{n-m-2}]~dt.
\]
Since this expression is independent of $i,$ summing over $i$ produces a
factor of $n$ in front. But then we have counted every term exactly twice,
since we can choose the $i$ first and then the $j$ or vice versa. Thus, the
sum of all the terms in Point 2 is
\begin{equation}
-\frac{n}{2}\sum_{m=0}^{n-2}\tau\lbrack(x_{t,\lambda}^{\ast}(x_{t,\lambda
}^{\ast}x_{t,\lambda})^{m}]\tau\lbrack(x_{t,\lambda}^{\ast}(x_{t,\lambda
}^{\ast}x_{t,\lambda})^{n-m-2}]~dt. \label{eq:Point2}%
\end{equation}
By a similar argument, the sum of all the terms in Point 3 is
\begin{equation}
-\frac{n}{2}\sum_{m=0}^{n-2}\tau\lbrack x_{t,\lambda}(x_{t,\lambda}^{\ast
}x_{t,\lambda})^{m}]\tau\lbrack x_{t,\lambda}(x_{t,\lambda}^{\ast}%
x_{t,\lambda})^{n-m-2}]~dt. \label{eq:Point3}%
\end{equation}

We now compute the terms in Point 4. We can cyclically permute the factors
until $dx_{t,\lambda}^{\ast}$ is at the beginning. Thus, each of the terms in
Point 4 can be written as
\begin{align}
&  \tau\lbrack dx_{t,\lambda}^{\ast}(x_{t,\lambda}x_{t,\lambda}^{\ast}%
)^{m}\,dx_{t,\lambda}\,(x_{t,\lambda}^{\ast}x_{t,\lambda})^{n-m-1}]\nonumber\\
&  =\tau\lbrack(x_{t,\lambda}^{\ast}x_{t,\lambda})^{m}]\tau\lbrack
(x_{t,\lambda}^{\ast}x_{t,\lambda})^{n-m-1}]~dt, \label{eq:Point4.iFixed}%
\end{align}
where $m=0,\ldots,n-1$. Now, there are a total of $n^{2}$ terms in Point 4,
but from (\ref{eq:Point4.iFixed}), we can see that there are only $n$
\textit{distinct} terms, each of which occurs $n$ times, so that the sum of
all terms from Point 4 is
\begin{equation}
n\sum_{m=0}^{n-1}\tau\lbrack(x_{t,\lambda}^{\ast}x_{t,\lambda})^{m}%
]\tau\lbrack(x_{t,\lambda}^{\ast}x_{t,\lambda})^{n-m-1}]~dt. \label{eq:Point4}%
\end{equation}

We now obtain (\ref{DtExpand2}) by adding (\ref{eq:Point2}), (\ref{eq:Point3}%
), and (\ref{eq:Point4}) and making a change of index.
\end{proof}

\begin{proposition}
\label{DS.Dt} The function $S$ satisfies the equation
\begin{align}
\frac{\partial S}{\partial t}  &  =\frac{1}{2}\tau\lbrack x_{t,\lambda
}(x_{t,\lambda}^{\ast}x_{t,\lambda}+\varepsilon)^{-1}]^{2}\nonumber\\
&  +\frac{1}{2}\tau\lbrack x_{t,\lambda}^{\ast}(x_{t,\lambda}^{\ast
}x_{t,\lambda}+\varepsilon)^{-1}]^{2}+\varepsilon\tau\lbrack(x_{t,\lambda
}^{\ast}x_{t,\lambda}+\varepsilon)^{-1}]^{2}. \label{eq:DS.Dt}%
\end{align}

\end{proposition}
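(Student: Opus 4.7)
The plan is to derive (\ref{eq:DS.Dt}) from Lemma \ref{Dt:moments} by expanding $\log(x_{t,\lambda}^{\ast}x_{t,\lambda}+\varepsilon)$ in a Neumann series, differentiating in $t$ term by term, and reassembling the resulting sums as squares of resolvents. The key structural observation is that the overall factor $n$ in front of each line of (\ref{DtExpand2}) is exactly what is needed to cancel the $1/n$ from the logarithm, so each double sum factors as a convolution and collapses into a product of Neumann series.

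First I will work on the restricted range $\varepsilon>\|x_{t,\lambda}^{\ast}x_{t,\lambda}\|$, where the operator-norm convergent expansion
\[
\log(x_{t,\lambda}^{\ast}x_{t,\lambda}+\varepsilon)=\log\varepsilon+\sum_{n=1}^{\infty}\frac{(-1)^{n+1}}{n\varepsilon^{n}}(x_{t,\lambda}^{\ast}x_{t,\lambda})^{n}
\]
gives $S=\log\varepsilon+\sum_{n\geq 1}\frac{(-1)^{n+1}}{n\varepsilon^{n}}\tau[(x_{t,\lambda}^{\ast}x_{t,\lambda})^{n}]$. Since $\|x_{t,\lambda}^{\ast}x_{t,\lambda}\|$ is locally bounded in $t$, termwise differentiation in $t$ is legitimate and I may substitute (\ref{DtN1}) and (\ref{DtExpand2}). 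I will first note that (\ref{DtExpand2}) in fact reduces correctly to $1$ at $n=1$ (both subtractive sums being empty and the third sum contributing $1\cdot\tau[1]\tau[1]=1$), so one unified formula governs all $n\geq 1$.

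The heart of the argument is a reindexing of each of the three lines of (\ref{DtExpand2}). In the first line I will set $k=m-1$ and $\ell=n-m-1$, so that $n=k+\ell+2$ and the double sum factors as a product of two single sums; the explicit minus sign in that line is cancelled by $(-1)^{n+1}=-(-1)^{k+\ell}$, turning the sum into a positive square. The single sum is then recognized, via the Neumann series $(x_{t,\lambda}^{\ast}x_{t,\lambda}+\varepsilon)^{-1}=\sum_{k\geq 0}(-1)^{k}\varepsilon^{-k-1}(x_{t,\lambda}^{\ast}x_{t,\lambda})^{k}$ together with the trace, as $\tau[x_{t,\lambda}^{\ast}(x_{t,\lambda}^{\ast}x_{t,\lambda}+\varepsilon)^{-1}]$, giving the first term of (\ref{eq:DS.Dt}). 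The same substitution on the second line of (\ref{DtExpand2}) yields $\frac{1}{2}\tau[x_{t,\lambda}(x_{t,\lambda}^{\ast}x_{t,\lambda}+\varepsilon)^{-1}]^{2}$, while the change of index $k=n-m$, $\ell=m-1$ on the third line produces $\varepsilon\,\tau[(x_{t,\lambda}^{\ast}x_{t,\lambda}+\varepsilon)^{-1}]^{2}$.

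To remove the restriction on $\varepsilon$, I will note that both sides of (\ref{eq:DS.Dt}) are real-analytic in $\varepsilon\in(0,\infty)$: each trace of a resolvent of the positive operator $x_{t,\lambda}^{\ast}x_{t,\lambda}$ extends holomorphically across the positive real axis, and $\partial_{t}S$ inherits this analyticity through $\partial_{\varepsilon}S=\tau[(x_{t,\lambda}^{\ast}x_{t,\lambda}+\varepsilon)^{-1}]$. Hence the identity, once established on an open subset of the positive axis, propagates to all $\varepsilon>0$ by analytic continuation. I expect the main obstacle to be the sign and index bookkeeping in the reindexing step, where the two explicit minus signs in (\ref{DtExpand2}) must be absorbed cleanly into $(-1)^{n+1}$ to produce the positive squares appearing in (\ref{eq:DS.Dt}); the justification of termwise differentiation and the analytic continuation are soft technical issues by comparison.
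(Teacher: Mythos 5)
Your series expansion, termwise differentiation via Lemma \ref{Dt:moments}, and reindexing of the three double sums into squares of resolvent traces is exactly the computation in the paper's proof (and your remark that (\ref{DtExpand2}) reduces to (\ref{DtN1}) at $n=1$ is a harmless simplification). The genuine gap is in your final step, the removal of the restriction $\varepsilon>\Vert x_{t,\lambda}^{\ast}x_{t,\lambda}\Vert$. You propose to analytically continue the identity (\ref{eq:DS.Dt}) itself in $\varepsilon$, but for the identity theorem you need to know that the left-hand side $\partial S/\partial t$ exists and is real-analytic in $\varepsilon$ on all of $(0,\infty)$ --- and that is precisely what is unknown at this stage. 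Your stated justification, that $\partial_{t}S$ ``inherits this analyticity through $\partial_{\varepsilon}S=\tau[(x_{t,\lambda}^{\ast}x_{t,\lambda}+\varepsilon)^{-1}]$,'' is not an argument: analyticity of $S(t,\lambda,\cdot)$ for each fixed $t$ gives no control whatsoever on the $t$-derivative, and a priori $\partial S/\partial t$ might not even exist when $\varepsilon\leq\Vert x_{t,\lambda}^{\ast}x_{t,\lambda}\Vert$, since the Neumann series that produced it is unavailable there.

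The paper closes this gap by a different device: it first integrates the established identity in time, obtaining, for $\varepsilon>\max_{s\leq t}\Vert x_{s,\lambda}^{\ast}x_{s,\lambda}\Vert$, the formula (\ref{eq:SIntegral}) expressing $S(t,\lambda,\varepsilon)$ as $S(0,\lambda,\varepsilon)$ plus a time integral of resolvent traces. Both sides of that integrated identity are analytic in $\varepsilon$ on $(0,\infty)$ --- the right side because the integrand is, locally uniformly in $s$, and the left side by the operator-valued expansion of $\log(x_{t,\lambda}^{\ast}x_{t,\lambda}+\varepsilon+h)$ in powers of $h$ --- so analytic continuation is applied at the level of $S$ itself, where analyticity can actually be proved, and only afterwards does one differentiate in $t$ to recover (\ref{eq:DS.Dt}) for every $\varepsilon>0$. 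To repair your write-up, either prove directly that $\partial S/\partial t$ exists and is analytic in $\varepsilon$ on $(0,\infty)$ (which is essentially as hard as the proposition itself), or adopt this integrate-then-continue-then-differentiate argument.
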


\begin{proof}
We first show that (\ref{eq:DS.Dt}) holds for all $\varepsilon>\Vert
x_{t,\lambda}^{\ast}x_{t,\lambda}\Vert$. Let $\varepsilon>\Vert x_{t,\lambda
}^{\ast}x_{t,\lambda}\Vert$. We write $\log(x+\varepsilon)$ as $\log
\varepsilon+\log(1+x/\varepsilon)$ and then expand in powers of $x/\varepsilon
.$ We then substitute $x=x_{t,\lambda}^{\ast}x_{t,\lambda},$ and then apply
the trace term by term, giving
\begin{equation}
S(t,\lambda,\varepsilon)=\log\varepsilon+\sum_{n=1}^{\infty}\frac{(-1)^{n-1}%
}{n\varepsilon^{n}}\tau\lbrack(x_{t,\lambda}^{\ast}x_{t,\lambda})^{n}].
\label{eq:SPower}%
\end{equation}

We now wish to differentiate the right-hand side of (\ref{eq:SPower}) term by
term in $t$. We will see shortly that when we differentiate inside the sum,
the resulting series still converges for $\varepsilon>\Vert x_{t,\lambda
}^{\ast}x_{t,\lambda}\Vert$. Furthermore, since the map $t\mapsto x_{t}$ is
continuous in the operator norm topology, $\Vert x_{t}\Vert$ is a locally
bounded function of $t$. Hence, the series of derivatives converges locally
uniformly in $t$. This, together with the pointwise convergence of the
original series, will show that term-by-term differentiation is valid.

If we differentiate inside the sum in (\ref{eq:SPower}), we obtain
\begin{equation}
\sum_{n=1}^{\infty}\frac{(-1)^{n-1}}{n\varepsilon^{n}}\frac{\partial}{\partial
t}\tau\lbrack(x_{t,\lambda}^{\ast}x_{t,\lambda})^{n}]. \label{eq:SeriestoDiff}%
\end{equation}
By Lemma \ref{Dt:moments}, the above power series becomes
\begin{align}
&  \frac{1}{2}\sum_{n=2}^{\infty}\sum_{m=1}^{n-1}\frac{(-1)^{n}}%
{\varepsilon^{n}}\tau\lbrack x_{t,\lambda}(x_{t,\lambda}^{\ast}x_{t,\lambda
})^{m-1}]\tau\lbrack x_{t,\lambda}(x_{t,\lambda}^{\ast}x_{t,\lambda}%
)^{n-m-1}]\nonumber\\
&  +\frac{1}{2}\sum_{n=2}^{\infty}\sum_{m=1}^{n-1}\frac{(-1)^{n}}%
{\varepsilon^{n}}\tau\lbrack x_{t,\lambda}^{\ast}(x_{t,\lambda}^{\ast
}x_{t,\lambda})^{m-1}]\tau\lbrack x_{t,\lambda}^{\ast}(x_{t,\lambda}^{\ast
}x_{t,\lambda})^{n-m-1}]\nonumber\\
&  +\sum_{n=1}^{\infty}\sum_{m=1}^{n}\frac{(-1)^{n-1}}{\varepsilon^{n}}%
\tau\lbrack(x_{t,\lambda}^{\ast}x_{t,\lambda})^{n-m}]\tau\lbrack(x_{t,\lambda
}^{\ast}x_{t,\lambda})^{m-1}]. \label{threeTerms}%
\end{align}
Note that the constant term $1$ is in the last term in (\ref{threeTerms}). The
first term in (\ref{threeTerms}) may be rewritten as
\begin{align*}
&  \frac{1}{2}\frac{1}{\varepsilon^{2}}\sum_{n=0}^{\infty}\sum_{m=0}^{n}%
\frac{(-1)^{n}}{\varepsilon^{n}}\tau\lbrack x_{t,\lambda}(x_{t,\lambda}^{\ast
}x_{t,\lambda})^{m}]\tau\lbrack x_{t,\lambda}(x_{t,\lambda}^{\ast}%
x_{t,\lambda})^{n-m}]\\
=  &  \frac{1}{2}\left(  \sum_{k=0}^{\infty}\frac{(-1)^{k}}{\varepsilon^{k+1}%
}\tau\lbrack x_{t,\lambda}(x_{t,\lambda}^{\ast}x_{t,\lambda})^{k}]\right)
\left(  \sum_{l=0}^{\infty}\frac{(-1)^{l}}{\varepsilon^{l+1}}\tau\lbrack
x_{t,\lambda}(x_{t,\lambda}^{\ast}x_{t,\lambda})^{l}]\right) \\
=  &  \frac{1}{2}\tau\lbrack x_{t,\lambda}(x_{t,\lambda}^{\ast}x_{t,\lambda
}+\varepsilon)^{-1}]^{2}.
\end{align*}

The second term in (\ref{threeTerms}) differs from the first term only by
replacing the $x_{t,\lambda}$ by $x_{t,\lambda}^{\ast}$ in the two trace
terms, and is therefore computed as
\[
\frac{1}{2}\tau\lbrack x_{t,\lambda}^{\ast}(x_{t,\lambda}^{\ast}x_{t,\lambda
}+\varepsilon)^{-1}]^{2}.
\]
A similar computation expresses the last term in (\ref{threeTerms}) as
\[
\sum_{n=1}^{\infty}\sum_{m=1}^{n}\frac{(-1)^{n-1}}{\varepsilon^{n}}\tau
\lbrack(x_{t,\lambda}^{\ast}x_{t,\lambda})^{n-m}]\tau\lbrack(x_{t,\lambda
}^{\ast}x_{t,\lambda})^{m-1}]\ =\varepsilon\tau\lbrack(x_{t,\lambda}^{\ast
}x_{t,\lambda}+\varepsilon)^{-1}]^{2}.
\]
This shows that the series in (\ref{eq:SeriestoDiff}) converges to the right
hand side of (\ref{eq:DS.Dt}). It follows that (\ref{eq:DS.Dt}) holds for all
$\varepsilon>\Vert x_{t,\lambda}^{\ast}x_{t,\lambda}\Vert$.

Thus, for all $\varepsilon>\max_{s\leq t}\Vert x_{s,\lambda}^{\ast
}x_{s,\lambda}\Vert$, we have
\begin{align}
S(t,\lambda,\varepsilon)  &  =S(0,\lambda,\varepsilon)+\int_{0}^{t}\left\{
\frac{1}{2}\tau\lbrack x_{s,\lambda}(x_{s,\lambda}^{\ast}x_{s,\lambda
}+\varepsilon)^{-1}]^{2}\right. \nonumber\\
&  \left.  +\frac{1}{2}\tau\lbrack x_{s,\lambda}^{\ast}(x_{s,\lambda}^{\ast
}x_{s,\lambda}+\varepsilon)^{-1}]^{2}+\varepsilon\tau\lbrack(x_{s,\lambda
}^{\ast}x_{s,\lambda}+\varepsilon)^{-1}]^{2}\right\}  \,ds.
\label{eq:SIntegral}%
\end{align}
The right hand side of (\ref{eq:SIntegral}) is analytic in $\varepsilon$ for
all $\varepsilon>0$. We now claim that the left hand side of
(\ref{eq:SIntegral}) is also analytic. At each $\varepsilon>0$, we have the
operator-valued power series expansion
\[
\log(x_{t,\lambda}^{\ast}x_{t,\lambda}+\varepsilon+h)=\log(x_{t,\lambda}%
^{\ast}x_{t,\lambda}+\varepsilon)+\sum_{n=1}^{\infty}\frac{(-1)^{n-1}h^{n}}%
{n}(x_{t,\lambda}^{\ast}x_{t,\lambda}+\varepsilon)^{-n}%
\]
for $|h|<\Vert(x_{t,\lambda}^{\ast}x_{t,\lambda}+\varepsilon)^{-1}\Vert$.
Taking the trace gives
\[
S(t,\lambda,\varepsilon+h)=\log(x_{t,\lambda}^{\ast}x_{t,\lambda}%
+\varepsilon)+\sum_{n=1}^{\infty}\frac{(-1)^{n-1}h^{n}}{n}\tau\lbrack
(x_{t,\lambda}^{\ast}x_{t,\lambda}+\varepsilon)^{-n}]
\]
for $|h|<\Vert(x_{t,\lambda}^{\ast}x_{t,\lambda}+\varepsilon)^{-1}\Vert$. This
shows $S(t,\lambda,\cdot)$ is analytic on the positive real line. Since both
sides of (\ref{eq:SIntegral}) define an analytic function for $\varepsilon>0$
and they agree for all large $\varepsilon$, they are indeed equal for all
$\varepsilon>0$. Now, the conclusion of the proposition follows from
differentiating both sides of (\ref{eq:SIntegral}) with respect to $t.$
\end{proof}

\begin{lemma}
\label{Deri.Formulas}The partial derivatives of $S$ with respect to
$\varepsilon$ and $\lambda$ are given by the following formulas.
\begin{align*}
\frac{\partial S}{\partial\lambda}  &  =-\tau\lbrack x_{t,\lambda}^{\ast
}(x_{t,\lambda}^{\ast}x_{t,\lambda}+\varepsilon)^{-1}]\\
\frac{\partial S}{\partial\bar{\lambda}}  &  =-\tau\lbrack x_{t,\lambda
}(x_{t,\lambda}^{\ast}x_{t,\lambda}+\varepsilon)^{-1}]\\
\frac{\partial S}{\partial\varepsilon}  &  =\tau\lbrack(x_{t,\lambda}^{\ast
}x_{t,\lambda}+\varepsilon)^{-1}].
\end{align*}

\end{lemma}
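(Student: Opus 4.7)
The plan is to derive all three formulas by the same power-series-plus-analyticity argument already carried out in the proof of Proposition \ref{DS.Dt}. First I would fix $t$ and $\lambda$ and work in the regime $\varepsilon > \|x_{t,\lambda}^{\ast}x_{t,\lambda}\|$, where the expansion
\[
S(t,\lambda,\varepsilon) = \log\varepsilon + \sum_{n=1}^{\infty}\frac{(-1)^{n-1}}{n\varepsilon^{n}}\tau[(x_{t,\lambda}^{\ast}x_{t,\lambda})^{n}]
\]
is valid and, since $\|x_{t,\lambda}\|$ is locally bounded in $(t,\lambda)$, converges locally uniformly in $(t,\lambda,\varepsilon)$.

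Next I would differentiate termwise. For $\partial/\partial\varepsilon$ this is immediate: differentiating the series gives $\sum_{n=0}^{\infty}(-1)^{n}\varepsilon^{-n-1}\tau[(x_{t,\lambda}^{\ast}x_{t,\lambda})^{n}]$, which by the geometric-series identity equals $\tau[(x_{t,\lambda}^{\ast}x_{t,\lambda}+\varepsilon)^{-1}]$. For the Wirtinger derivatives in $\lambda$, I would use that $x_{t,\lambda}=x_{t}-\lambda$ depends holomorphically on $\lambda$ while $x_{t,\lambda}^{\ast}$ depends only on $\bar{\lambda}$, so $\partial_{\lambda}x_{t,\lambda}=-1$, $\partial_{\lambda}x_{t,\lambda}^{\ast}=0$, and dually for $\partial_{\bar{\lambda}}$. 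Cyclicity of $\tau$ applied inside each summand yields
\[
\partial_{\lambda}\tau[(x_{t,\lambda}^{\ast}x_{t,\lambda})^{n}] = -n\,\tau[x_{t,\lambda}^{\ast}(x_{t,\lambda}^{\ast}x_{t,\lambda})^{n-1}],
\]
and the sum then telescopes to $-\tau[x_{t,\lambda}^{\ast}(x_{t,\lambda}^{\ast}x_{t,\lambda}+\varepsilon)^{-1}]$; the analogous calculation for $\partial/\partial\bar{\lambda}$ brings down $x_{t,\lambda}$ instead.

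Finally, I would promote the identities from the regime $\varepsilon > \|x_{t,\lambda}^{\ast}x_{t,\lambda}\|$ to all $\varepsilon > 0$ by the same analyticity argument used for Proposition \ref{DS.Dt}: both sides of each identity are analytic functions of $\varepsilon$ on $(0,\infty)$ (the left sides via the operator-valued Taylor expansion of $\log(x_{t,\lambda}^{\ast}x_{t,\lambda}+\varepsilon+h)$ in $h$ already invoked in that proof, the right sides by inspection) and they agree on a half-line, so they agree everywhere.

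No step presents a real obstacle — this is a routine calculation paralleling the proof of Proposition \ref{DS.Dt}. The only mildly delicate point is the justification of term-by-term differentiation in $\lambda$, but since $\|x_{t,\lambda}^{\ast}(x_{t,\lambda}^{\ast}x_{t,\lambda})^{n-1}\| \leq \|x_{t,\lambda}\|^{2n-1}$, the series of differentiated terms converges absolutely and locally uniformly for $\varepsilon > \|x_{t,\lambda}\|^{2}$, which is all that is needed.
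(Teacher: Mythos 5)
Your argument is correct, but it takes a genuinely different route from the paper. The paper's proof is essentially one line: it invokes Lemma 1.1 of Brown \cite{Br}, i.e.\ the differentiation formula (\ref{diffLog}) for $\tau[\log(f(u))]$, applied to $f=x_{t,\lambda}^{\ast}x_{t,\lambda}+\varepsilon$ viewed as a function of $\varepsilon$, $a$, and $b$; this yields all three identities simultaneously, for every $\varepsilon>0$, with no series expansion and no continuation step. Your route instead parallels the proof of Proposition \ref{DS.Dt}: expand for $\varepsilon>\Vert x_{t,\lambda}^{\ast}x_{t,\lambda}\Vert$, differentiate termwise (your computation $\partial_{\lambda}\tau[(x_{t,\lambda}^{\ast}x_{t,\lambda})^{n}]=-n\,\tau[x_{t,\lambda}^{\ast}(x_{t,\lambda}^{\ast}x_{t,\lambda})^{n-1}]$ via $\partial_{\lambda}x_{t,\lambda}=-1$, $\partial_{\lambda}x_{t,\lambda}^{\ast}=0$ and cyclicity is right, as is the resummation to the resolvent), then extend to all $\varepsilon>0$ by real-analyticity in $\varepsilon$. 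What the paper's approach buys is brevity and the absence of any continuation argument; what yours buys is self-containedness, since it does not rely on Brown's differentiation lemma.

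One point you should tighten in the continuation step: the expansion of $\log(x_{t,\lambda}^{\ast}x_{t,\lambda}+\varepsilon+h)$ quoted from the proof of Proposition \ref{DS.Dt} establishes analyticity in $\varepsilon$ of $S$ itself, hence also of $\partial S/\partial\varepsilon$ (the derivative of a real-analytic function is real-analytic), but for the first two identities the left-hand sides are $\partial S/\partial\lambda$ and $\partial S/\partial\bar{\lambda}$, whose analyticity in $\varepsilon$ does not follow formally from analyticity of $S$ alone. It does follow by the same device: differentiate the $h$-series term by term in $a$ and $b$, using the bound $\bigl\vert\partial_{\lambda}\tau[(x_{t,\lambda}^{\ast}x_{t,\lambda}+\varepsilon)^{-n}]\bigr\vert\leq n\Vert x_{t,\lambda}\Vert\,\Vert(x_{t,\lambda}^{\ast}x_{t,\lambda}+\varepsilon)^{-1}\Vert^{n+1}$, so the differentiated series has the same radius of convergence in $h$ and converges locally uniformly in $\lambda$; this shows that $\partial S/\partial\lambda(t,\lambda,\cdot)$ and $\partial S/\partial\bar{\lambda}(t,\lambda,\cdot)$ are real-analytic on $(0,\infty)$, and the identity theorem on the connected interval $(0,\infty)$ then finishes the argument. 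With that addition your proof is complete.
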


\begin{proof}
By Lemma 1.1 in Brown's paper \cite{Br}, the derivative of the trace of a
logarithm is given by
\begin{equation}
\frac{d}{du}\tau\lbrack\log(f(u))]=\tau\left[  f(u)^{-1}\frac{df}{du}\right]
. \label{diffLog}%
\end{equation}
The lemma follows from applying this formula.
\end{proof}

Now we are ready to prove Theorem \ref{PDE.thm}.

\begin{proof}
[Proof of Theorem \ref{PDE.thm}]By Lemma \ref{DS.Dt},
\[
\frac{\partial S}{\partial t}=\frac{1}{2}\tau\lbrack x_{t,\lambda
}(x_{t,\lambda}^{\ast}x_{t,\lambda}+\varepsilon)^{-1}]^{2}+\frac{1}{2}%
\tau\lbrack x_{t,\lambda}^{\ast}(x_{t,\lambda}^{\ast}x_{t,\lambda}%
+\varepsilon)^{-1}]^{2}+\varepsilon\tau\lbrack(x_{t,\lambda}^{\ast
}x_{t,\lambda}+\varepsilon)^{-1}]^{2}.
\]
Using Lemma \ref{Deri.Formulas}, the above displayed equation can be written
as
\[
\frac{\partial S}{\partial t}=\frac{1}{2}\left(  \frac{\partial S}%
{\partial\lambda}\right)  ^{2}+\frac{1}{2}\left(  \frac{\partial S}%
{\partial\bar{\lambda}}\right)  ^{2}+\varepsilon\left(  \frac{\partial
S}{\partial\varepsilon}\right)  ^{2}.
\]
Now, (\ref{thePDE}) follows from applying the definition of Cauchy--Riemann
operators to the above equation. The initial condition holds because
$x_{t}=x_{0}$ when $t=0$.
\end{proof}

\section{The Hamilton--Jacobi analysis\label{HJ.sec}}

\subsection{The Hamilton--Jacobi method}

We define a \textquotedblleft Hamiltonian\textquotedblright\ function
$H:\mathbb{R}^{6}\rightarrow\mathbb{R}$ by replacing the derivatives $\partial
S/\partial a,$ $\partial S/\partial b,$ and $\partial S/\partial\varepsilon$
on the right-hand side of the PDE in Theorem \ref{PDE.thm} by
\textquotedblleft momentum\textquotedblright\ variables $p_{a},$ $p_{b},$ and
$p_{\varepsilon},$ and then reversing the overall sign. Thus, we define%
\begin{equation}
H(a,b,\varepsilon,p_{a},p_{b},p_{\varepsilon})=-\frac{1}{4}(p_{a}^{2}%
-p_{b}^{2})-\varepsilon p_{\varepsilon}^{2}, \label{theHamiltonian}%
\end{equation}
where in this case, $H$ happens to be independent of $a$ and $b.$ We then
introduce Hamilton's equations for the Hamiltonian $H,$ namely%
\begin{equation}
\frac{du}{dt}=\frac{\partial H}{\partial p_{u}};\quad\frac{dp_{u}}{dt}%
=-\frac{\partial H}{\partial u}, \label{HamSystem}%
\end{equation}
where $u$ ranges over the set $\{a,b,\varepsilon\}.$ We will use the notation%
\[
\lambda(t)=a(t)+ib(t).
\]

\begin{notation}
We use the notation
\[
p_{a,0},~p_{b,0},~p_{0}%
\]
for the initial values of $p_{a},$ $p_{b},$ and $p_{\varepsilon},$ respectively.
\end{notation}

In the Hamilton--Jacobi analysis, the initial momenta are determined by the
initial positions $\lambda_{0}$ and $\varepsilon_{0}$ by means of the
following formula:%
\begin{equation}
p_{a,0}=\frac{\partial}{\partial a_{0}}S(0,\lambda_{0},\varepsilon_{0});\quad
p_{b,0}=\frac{\partial}{\partial b_{0}}S(0,\lambda_{0},\varepsilon_{0});\quad
p_{0}=\frac{\partial}{\partial\varepsilon_{0}}S(0,\lambda_{0},\varepsilon
_{0}). \label{initMomentaGen}%
\end{equation}
Now, the formula for $S(0,\lambda,\varepsilon)$ in Theorem \ref{PDE.thm} may
be written more explicitly as%
\[
S(0,\lambda,\varepsilon)=\int_{\mathbb{R}}\log(\left\vert x-\lambda\right\vert
^{2}+\varepsilon)~d\mu(x),
\]
where $\mu$ is the law of $x_{0}$, as in (\ref{muDef}). We thus obtain the
following formula for the initial momenta:
\begin{align}
p_{a,0}  &  =\int_{\mathbb{R}}\frac{2(a_{0}-x)}{(a_{0}-x)^{2}+b_{0}%
^{2}+\varepsilon_{0}}~d\mu(x)\nonumber\\
p_{b,0}  &  =\int_{\mathbb{R}}\frac{2b_{0}}{(a_{0}-x)^{2}+b_{0}^{2}%
+\varepsilon_{0}}~d\mu(x)\label{initialMomenta}\\
p_{0}  &  =\int_{\mathbb{R}}\frac{1}{(a_{0}-x)^{2}+b_{0}^{2}+\varepsilon_{0}%
}~d\mu(x).\nonumber
\end{align}
Provided we assume $\varepsilon_{0}>0,$ the integrals are convergent.

\begin{proposition}
\label{HJ.prop}Suppose we have a solution to the Hamiltonian system on a time
interval $[0,T]$ such that $\varepsilon(t)>0$ for all $t\in\lbrack0,T].$ Then
we have
\begin{equation}
S(t,\lambda(t),\varepsilon(t))=S(0,\lambda_{0},\varepsilon_{0})+tH_{0},
\label{firstHJ}%
\end{equation}
where%
\[
H_{0}=H(a_{0},b_{0},\varepsilon_{0},p_{a,0},p_{b,0},p_{0}).
\]
We also have
\begin{equation}
\frac{\partial S}{\partial u}(t,\lambda(t),\varepsilon(t))=p_{u}(t)
\label{secondHJ}%
\end{equation}
for all $u\in\{a,b,\varepsilon\}.$
\end{proposition}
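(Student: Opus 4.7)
The plan is to prove (\ref{secondHJ}) first, and then deduce (\ref{firstHJ}) from it by a short chain-rule computation using Euler's identity together with the conservation of $H$ along the Hamiltonian flow.

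For (\ref{secondHJ}), I would introduce the error
\[
\phi_u(t) := p_u(t) - \frac{\partial S}{\partial u}(t,\lambda(t),\varepsilon(t)), \qquad u \in \{a,b,\varepsilon\},
\]
and show that $\phi = (\phi_a,\phi_b,\phi_\varepsilon)$ solves a linear homogeneous ODE system on $[0,T]$ with $\phi(0) = 0$, so that uniqueness forces $\phi \equiv 0$. The initial condition is exactly (\ref{initMomentaGen}). For the ODE, I would rewrite the PDE of Theorem \ref{PDE.thm} as $\partial_t S + H(\varepsilon, \nabla S) = 0$ and differentiate in $u$ to obtain
\[
\partial_t \partial_u S = -\partial_u H|_{\nabla S} - \sum_v \partial_{p_v} H|_{\nabla S} \cdot \partial_v \partial_u S.
\]
Combining this with Hamilton's equations $\dot p_u = -\partial_u H|_{p}$ and $\dot v = \partial_{p_v} H|_p$, together with the chain rule $\tfrac{d}{dt}[\partial_u S(t,\lambda(t),\varepsilon(t))] = \partial_t \partial_u S + \sum_v \dot v \, \partial_v\partial_u S$, the terms involving $\partial_v\partial_u S$ rearrange so that $\dot\phi_u$ becomes a combination of the position-parallel differences $\partial_u H|_{\nabla S} - \partial_u H|_{p}$ and $\partial_{p_v} H|_{\nabla S} - \partial_{p_v} H|_{p}$. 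Because the Hamiltonian (\ref{theHamiltonian}) is polynomial of degree two in the momenta, each such difference is an explicit linear combination of the components of $\nabla S - p = -\phi$, yielding a linear system $\dot\phi = A(t)\phi$ with continuous coefficients.

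Granting (\ref{secondHJ}), equation (\ref{firstHJ}) is immediate. Differentiating $S$ along the trajectory gives
\[
\frac{d}{dt}S(t,\lambda(t),\varepsilon(t)) = \partial_t S + \sum_v \dot v\, \partial_v S = -H(\varepsilon,\nabla S) + \sum_v p_v \cdot \partial_{p_v}H|_p,
\]
using $\dot v = \partial_{p_v}H|_p$ and then (\ref{secondHJ}) to replace $\partial_v S$ by $p_v$. Since $H$ is homogeneous of degree two in $p$, Euler's identity gives $\sum_v p_v\,\partial_{p_v}H = 2H(\varepsilon,p)$, and the right-hand side collapses to $H(\varepsilon(t),p(t))$. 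Finally, $H$ has no explicit $t$-dependence, so it is conserved along the Hamiltonian flow; thus $H(\varepsilon(t),p(t)) \equiv H_0$, and integrating from $0$ to $t$ yields (\ref{firstHJ}).

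The main bookkeeping issue is ensuring that the mixed second derivatives $\partial_v \partial_u S$ are defined and continuous along the trajectory, since they enter the linear system above. This is where the hypothesis $\varepsilon(t) > 0$ is used: it keeps the trajectory in the region where $(x_{t,\lambda}^\ast x_{t,\lambda} + \varepsilon)^{-1}$ is a bounded operator, and iterating the trace-derivative calculation that underlies Lemma \ref{Deri.Formulas} shows that $S$ is jointly smooth in $(t,\lambda,\varepsilon)$ on $\{\varepsilon > 0\}$. Beyond that regularity verification the argument is routine.
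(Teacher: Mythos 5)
Your proposal is correct, but it proceeds differently from the paper at the structural level: the paper does not reprove the Hamilton--Jacobi machinery at all, it simply invokes the general method from Section 6.1 of \cite{DHKBrown}, quotes the general first formula
$S(t,\lambda(t),\varepsilon(t))=S(0,\lambda_0,\varepsilon_0)-tH_0+\int_0^t\sum_u p_u\,\partial H/\partial p_u\,ds$,
and then observes that degree-two homogeneity of $H$ in the momenta gives $\sum_u p_u\,\partial H/\partial p_u=2H$ and that $H$ is conserved, so the integral equals $2tH_0$; the second formula is taken directly from the general statement. You instead prove the general lemmas in-line: the momentum identity (\ref{secondHJ}) via the error $\phi_u=p_u-\partial_u S$ satisfying a linear homogeneous ODE with $\phi(0)=0$ (which is essentially the proof of the cited general result, and the order of derivation---momenta first, then the value formula---matches how that machinery is established), and then (\ref{firstHJ}) by differentiating $S$ along the flow and using Euler's identity plus conservation of $H$, which reproduces the paper's homogeneity/conservation step in an equivalent form ($dS/dt=H_0$ rather than $-H_0+2H_0$). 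What your route buys is self-containedness and an explicit accounting of where the hypotheses enter: the condition $\varepsilon(t)>0$ on all of $[0,T]$ and the $C^2$ joint regularity of $S$ in $(\lambda,\varepsilon)$ (with the interchange $\partial_t\partial_u S=\partial_u\partial_t S$) are exactly what the cited general theorem assumes, and the paper leaves this verification implicit. Your regularity sketch is the right one and can be made precise cheaply: for fixed $t$ and $\varepsilon>0$, smoothness in $(\lambda,\varepsilon)$ follows from resolvent expansions of $(x_{t,\lambda}^{\ast}x_{t,\lambda}+\varepsilon)^{-1}$ as in the analyticity argument inside the proof of Proposition \ref{DS.Dt}, and the needed regularity of $\partial_t S$ comes from the explicit formula (\ref{eq:DS.Dt}), whose right-hand side is again a smooth function of $(\lambda,\varepsilon)$ on $\{\varepsilon>0\}$; with that noted, your argument is complete and fully rigorous.
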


We refer to (\ref{firstHJ}) and (\ref{secondHJ}) as the first and second
Hamilton--Jacobi formulas, respectively.

\begin{proof}
The reader may consult Section 6.1 of \cite{DHKBrown} for a concise statement
and derivation of the general Hamilton--Jacobi method. (See also the book of
Evans \cite{Evans}.) The general form of the first Hamilton--Jacobi formula,
when applied to this case, reads as%
\[
S(t,\lambda(t),\varepsilon(t))=S(0,\lambda_{0},\varepsilon_{0})-tH_{0}%
+\int_{0}^{t}\sum_{u\in\{a,b,\varepsilon\}}p_{u}\frac{\partial H}{\partial
p_{u}}~ds.
\]
In our case, because the Hamiltonian is homogeneous of degree two in the
momentum variables, $\sum_{u\in\{a,b,\varepsilon\}}p_{u}\frac{\partial
H}{\partial p_{u}}$ is equal to $2H.$ Since $H$ is a constant of motion, the
general formula reduces to (\ref{firstHJ}). Meanwhile, (\ref{secondHJ}) is an
immediate consequence of the general form of the second Hamilton--Jacobi formula.
\end{proof}

\subsection{Solving the ODEs}

We now solve the Hamiltonian system (\ref{HamSystem}) with Hamiltonian given
by (\ref{theHamiltonian}). We start by noting several helpful constants of motion.

\begin{proposition}
The quantities%
\[
H,~p_{a},~p_{b},~\varepsilon p_{\varepsilon}^{2}%
\]
are constants of motion, meaning that they are constant along any solution of
Hamilton's equations (\ref{HamSystem}).
\end{proposition}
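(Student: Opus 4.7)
The plan is to verify each of the four claimed constants of motion directly from Hamilton's equations \eqref{HamSystem} applied to the Hamiltonian \eqref{theHamiltonian}. Three of the four are essentially automatic from general principles, and the fourth requires only a one-line computation (or, alternatively, follows algebraically from the other three).

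First, I would observe that $H$ is conserved along any trajectory because $H$ is autonomous: differentiating $H$ along a solution and substituting Hamilton's equations yields
\[
\frac{dH}{dt}=\sum_{u\in\{a,b,\varepsilon\}}\left(\frac{\partial H}{\partial u}\dot u+\frac{\partial H}{\partial p_u}\dot p_u\right)=\sum_{u}\left(\frac{\partial H}{\partial u}\frac{\partial H}{\partial p_u}-\frac{\partial H}{\partial p_u}\frac{\partial H}{\partial u}\right)=0.
\]
Next, because $H$ in \eqref{theHamiltonian} depends only on $\varepsilon$ and the momenta, $a$ and $b$ are cyclic coordinates: $\partial H/\partial a=\partial H/\partial b=0$, so Hamilton's equations immediately give $\dot p_a=\dot p_b=0$.

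It remains to show that $\varepsilon p_\varepsilon^2$ is conserved. The direct approach is to use Hamilton's equations for the $\varepsilon$-pair: $\dot\varepsilon=\partial H/\partial p_\varepsilon=-2\varepsilon p_\varepsilon$ and $\dot p_\varepsilon=-\partial H/\partial\varepsilon=p_\varepsilon^2$. Then
\[
\frac{d}{dt}\bigl(\varepsilon p_\varepsilon^2\bigr)=\dot\varepsilon\,p_\varepsilon^2+2\varepsilon p_\varepsilon\dot p_\varepsilon=-2\varepsilon p_\varepsilon^3+2\varepsilon p_\varepsilon\cdot p_\varepsilon^2=0.
\]
Alternatively, one may simply rearrange \eqref{theHamiltonian} to read $\varepsilon p_\varepsilon^2=-H-\tfrac{1}{4}(p_a^2-p_b^2)$ and note that the right-hand side is a combination of three quantities already shown to be constant. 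There is no genuine obstacle in this proposition; it is purely a bookkeeping step recording the conservation laws that will be used later to integrate the Hamiltonian system.
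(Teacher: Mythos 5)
Your proof is correct and essentially the same as the paper's: conservation of $H$ is the standard fact for autonomous Hamiltonians, $p_a$ and $p_b$ are constant because $H$ is independent of $a$ and $b$, and the paper handles $\varepsilon p_\varepsilon^2$ exactly via your ``alternative'' route, writing it as $-\tfrac{1}{4}(p_a^2-p_b^2)-H$. Your direct one-line computation of $\frac{d}{dt}(\varepsilon p_\varepsilon^2)$ is a fine (and correct) extra verification, but nothing more is needed.
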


\begin{proof}
The Hamiltonian is always a constant of motion in any Hamiltonian system. The
quantities $p_{a}$ and $p_{b}$ are constants of motion because $H$ is
independent of $a$ and $b.$ And finally, $\varepsilon p_{\varepsilon}^{2}$ is
a constant of motion because it equals $-\frac{1}{4}(p_{a}^{2}-p_{b}^{2})-H.$
\end{proof}

We now obtain solutions to (\ref{HamSystem}), where at the moment, we allow
arbitrary initial momenta, not necessarily given by (\ref{initMomentaGen}).

\begin{proposition}
\label{solveODE.prop}Consider the Hamiltonian system (\ref{HamSystem}) with
Hamiltonian (\ref{theHamiltonian}) and initial conditions
\[
(a_{0},b_{0},\varepsilon_{0},p_{a,0},p_{b,0},p_{0}),
\]
with $p_{0}>0.$ Then the solution to the system exists up to time
\[
t_{\ast}=1/p_{0}.
\]
Up until that time, we have%
\begin{align*}
p_{a}(t)  &  =p_{a,0}\\
a(t)  &  =a_{0}-\frac{1}{2}p_{a,0}t.\\
p_{b}(t)  &  =p_{b,0}\\
b(t)  &  =b_{0}+\frac{1}{2}p_{b,0}t\\
p_{\varepsilon}(t)  &  =\frac{p_{0}}{1-p_{0}t}\\
\varepsilon(t)  &  =\varepsilon_{0}\left(  1-p_{0}t\right)  ^{2}.
\end{align*}
If $\varepsilon_{0}>0$ then $\varepsilon(t)$ remains positive for all
$t<t_{\ast}.$
\end{proposition}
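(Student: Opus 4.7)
The plan is to write down Hamilton's equations explicitly for the Hamiltonian $H(a,b,\varepsilon,p_a,p_b,p_\varepsilon) = -\frac{1}{4}(p_a^2 - p_b^2) - \varepsilon p_\varepsilon^2$ and observe that the system decouples into three independent pairs. Since $H$ is independent of $a$ and $b$, the equations $\dot p_a = -\partial H/\partial a = 0$ and $\dot p_b = -\partial H/\partial b = 0$ immediately give $p_a(t) \equiv p_{a,0}$ and $p_b(t) \equiv p_{b,0}$. Then $\dot a = \partial H/\partial p_a = -p_a/2$ and $\dot b = \partial H/\partial p_b = p_b/2$ integrate directly to the stated affine formulas for $a(t)$ and $b(t)$, valid for as long as the solution exists.

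The only nontrivial pair is $(\varepsilon,p_\varepsilon)$, with equations $\dot p_\varepsilon = -\partial H/\partial \varepsilon = p_\varepsilon^2$ and $\dot\varepsilon = \partial H/\partial p_\varepsilon = -2\varepsilon p_\varepsilon$. I would solve the $p_\varepsilon$ equation first by separation of variables: $d p_\varepsilon / p_\varepsilon^2 = dt$ gives $-1/p_\varepsilon(t) = t - 1/p_0$, whence
\[
p_\varepsilon(t) = \frac{p_0}{1 - p_0 t},
\]
which is smooth on $[0, 1/p_0)$ and blows up as $t \to 1/p_0^-$. Substituting into $\dot\varepsilon = -2\varepsilon p_\varepsilon$ gives $(\log\varepsilon)' = -2p_0/(1-p_0 t)$, which integrates to $\log\varepsilon(t) = \log\varepsilon_0 + 2\log(1-p_0 t)$, yielding $\varepsilon(t) = \varepsilon_0(1 - p_0 t)^2$ on the same interval.

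To pin down the maximal time of existence, I would note that all variables except $p_\varepsilon$ extend continuously up to $t = 1/p_0$, while $p_\varepsilon$ diverges there; hence $t_\ast = 1/p_0$ is the blow-up time of the Hamiltonian flow. Conversely, for every $t < t_\ast$ the explicit formulas give a bona fide $C^\infty$ solution that satisfies the system (a direct substitution verifies this), so by uniqueness of solutions to the Hamiltonian ODE with smooth right-hand side on the open set where $p_\varepsilon$ is finite, these formulas are \emph{the} solution. Finally, since $(1-p_0 t)^2 > 0$ for $t \in [0, t_\ast)$, the sign condition $\varepsilon_0 > 0 \Rightarrow \varepsilon(t) > 0$ is immediate from the explicit formula.

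There is essentially no serious obstacle: the Hamiltonian is independent of $a$, $b$, so the $(a,p_a)$ and $(b,p_b)$ pairs are trivial, and the $(\varepsilon,p_\varepsilon)$ pair reduces to a first-order separable ODE in $p_\varepsilon$ alone. The only point that requires a moment's care is the finite-time blow-up of $p_\varepsilon$ at $t_\ast = 1/p_0$, which one must explicitly identify as the correct maximal existence time; I would verify this simply by inspection of the explicit formula, since $p_\varepsilon(t) \to \infty$ as $t \uparrow 1/p_0$ while all other quantities remain finite.
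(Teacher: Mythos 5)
Your proposal is correct and follows essentially the same route as the paper: both solve the separable equation $\dot p_{\varepsilon}=p_{\varepsilon}^{2}$ to get $p_{\varepsilon}(t)=p_{0}/(1-p_{0}t)$, substitute into $\dot\varepsilon=-2\varepsilon p_{\varepsilon}$ to obtain $\varepsilon(t)=\varepsilon_{0}(1-p_{0}t)^{2}$, and note that $p_{a},p_{b}$ are constant since $H$ is independent of $a,b$, so that $a(t),b(t)$ are affine. Your extra remarks identifying $t_{\ast}=1/p_{0}$ as the blow-up time and invoking uniqueness are a slightly more careful version of what the paper leaves implicit, and are fine.
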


\begin{proof}
We begin by noting that
\[
\dot{p}_{\varepsilon}=-\frac{\partial H}{\partial\varepsilon}=p_{\varepsilon
}^{2}.
\]
We may solve this separable equation as
\[
-\left(  \frac{1}{p_{\varepsilon}(t)}-\frac{1}{p_{0}}\right)  =t,
\]
from which the claimed formula for $p_{\varepsilon}(t)$ follows. We then note
that%
\begin{align*}
\frac{d\varepsilon}{dt}  &  =\frac{\partial H}{\partial p_{\varepsilon}}\\
&  =-2\varepsilon p_{\varepsilon}\\
&  =-2\varepsilon\frac{p_{0}}{1-p_{0}t}.
\end{align*}
This equation is also separable and may easily be integrated to give the
claimed formula for $\varepsilon(t).$

The formulas for $p_{a}$ and $p_{b}$ simply amount to saying that they are
constants of motion, and the formulas for $a$ and $b$ are then easily obtained.
\end{proof}

We now specialize the initial conditions to the form occurring in the
Hamilton--Jacobi method, that is, where the initial momenta are given by
(\ref{initialMomenta}). We note that the formulas in (\ref{initialMomenta})
can be written as
\begin{equation}
p_{b,0}=2b_{0}p_{0} \label{pb0}%
\end{equation}
and%
\begin{equation}
p_{a,0}=2a_{0}p_{0}-2p_{1}, \label{pa0}%
\end{equation}
where%
\begin{equation}
p_{1}=\int_{\mathbb{R}}\frac{x}{(a_{0}-x)^{2}+b_{0}^{2}+\varepsilon_{0}}%
~d\mu(x). \label{p1def}%
\end{equation}

\begin{proposition}
\label{solveODEspecial.prop}Suppose $a_{0},$ $b_{0},$ and $\varepsilon_{0}$
are chosen in such a way that $p_{0}=1/t,$ so that the lifetime $t_{\ast}$ of
the system equals $t.$ Then we have%
\begin{align*}
\lim_{s\rightarrow t^{-}}a(s)  &  =tp_{1}\\
\lim_{s\rightarrow t^{-}}b(s)  &  =2b_{0}\\
\lim_{s\rightarrow t^{-}}\varepsilon(s)  &  =0,
\end{align*}
where $p_{1}$ is as in (\ref{p1def}).
\end{proposition}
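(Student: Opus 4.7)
The plan is simple: this proposition is an immediate corollary of the explicit ODE solutions in Proposition~\ref{solveODE.prop} combined with the specific structure of the initial momenta (\ref{pb0})--(\ref{pa0}) imposed by the Hamilton--Jacobi framework. I would just substitute $p_0 = 1/t$ into those formulas and compute the three limits by direct algebra; no new conceptual ingredient is needed.

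First I would dispose of the $\varepsilon$ limit. From Proposition~\ref{solveODE.prop}, $\varepsilon(s) = \varepsilon_0 (1 - p_0 s)^2 = \varepsilon_0 (1 - s/t)^2$, which plainly tends to $0$ as $s \to t^-$. Next, the formula $b(s) = b_0 + \tfrac{1}{2} p_{b,0} s$, combined with (\ref{pb0}) in the form $p_{b,0} = 2 b_0 p_0 = 2 b_0 /t$, gives $b(s) = b_0 + (b_0/t)\, s$, whose limit as $s \to t^-$ is $2 b_0$. Finally, the formula $a(s) = a_0 - \tfrac{1}{2} p_{a,0} s$, together with (\ref{pa0}) in the form $p_{a,0} = 2 a_0/t - 2 p_1$, gives $a(s) = a_0 - (a_0/t - p_1) s$, whose limit is $a_0 - (a_0 - t p_1) = t p_1$.

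There is no real obstacle here. The only thing worth flagging in the writeup is the consistency check that the hypothesis $p_0 = 1/t$ can actually be arranged (for a given $a_0, b_0$, this requires choosing $\varepsilon_0 > 0$ so that the third integral in (\ref{initialMomenta}) equals $1/t$), but the proposition only conditionally asserts what happens under this assumption, so no existence argument is required at this stage. The content of the proposition is thus algebraic; its significance is that the limiting endpoint $(tp_1,\,2b_0)$ and the vanishing of $\varepsilon$ set up the change-of-variables $(a_0, b_0, \varepsilon_0) \mapsto (a,b)$ which will drive the Brown measure computation in later sections.
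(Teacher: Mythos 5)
Your proposal is correct and follows essentially the same route as the paper's proof, which likewise just substitutes the explicit formulas of Proposition \ref{solveODE.prop} together with the relations (\ref{pb0}) and (\ref{pa0}) and sets $p_{0}=1/t$. The algebra in all three limits checks out, so nothing further is needed.
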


\begin{proof}
The result follows easily from the formulas in Proposition \ref{solveODE.prop}%
, after using the relations (\ref{pb0}) and (\ref{pa0})\ and setting
$p_{0}=1/t.$
\end{proof}

\begin{definition}
\label{lifetimes.def}Let $t_{\ast}(\lambda_{0},\varepsilon_{0})$ denote the
lifetime of the solution, namely
\[
t_{\ast}(\lambda_{0},\varepsilon_{0})=\frac{1}{p_{0}}=\left(  \int%
_{\mathbb{R}}\frac{d\mu(x)}{(a_{0}-x)^{2}+b_{0}^{2}+\varepsilon_{0}}\right)
^{-1},
\]
and let
\[
T(\lambda_{0}):=\lim_{\varepsilon_{0}\rightarrow0^{+}}t_{\ast}(\lambda
_{0},\varepsilon_{0})=\left(  \int_{\mathbb{R}}\frac{d\mu(x)}{(a_{0}%
-x)^{2}+b_{0}^{2}}\right)  ^{-1}.
\]

\end{definition}

We note that if $b_{0}=0$ then the integral in the definition of
$T(a_{0}+ib_{0})$ may be infinite for certain values of $a_{0}.$ Thus, it is
possible for $T(a_{0}+ib_{0})$ to equal 0 when $b_{0}=0.$

\begin{proposition}
\label{smallEpsilon0.prop}Let%
\[
\lambda(t;\lambda_{0},\varepsilon_{0})
\]
denote the solution to the system (\ref{HamSystem}) with $\lambda
(0)=\lambda_{0}$ and $\varepsilon(0)=\varepsilon_{0}$, and with initial
momenta given by (\ref{initialMomenta}). Suppose $\lambda_{0}$ satisfies
$T(\lambda_{0})>t.$ Then%
\[
\lim_{\varepsilon_{0}\rightarrow0^{+}}\lambda(t;\lambda_{0},\varepsilon
_{0})=\lambda_{0}-t\int_{\mathbb{R}}\frac{1}{\lambda_{0}-x}~d\mu(x),
\]
provided that $\lambda_{0}$ does not belong the closed support of $\mu.$
\end{proposition}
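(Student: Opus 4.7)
The plan is to prove this by direct computation: Proposition \ref{solveODE.prop} already gives explicit closed-form expressions for $a(t)$ and $b(t)$, so the only real work is (i) verifying that the solution actually reaches time $t$ when $\varepsilon_0$ is small, and (ii) passing to the limit $\varepsilon_0 \to 0^+$ inside the integrals defining the initial momenta.

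First I would substitute the expressions \eqref{pb0} and \eqref{pa0} for the initial momenta into the formulas $a(t) = a_0 - \tfrac{1}{2}p_{a,0}t$ and $b(t) = b_0 + \tfrac{1}{2}p_{b,0}t$ from Proposition \ref{solveODE.prop}, obtaining
\[
a(t;\lambda_0,\varepsilon_0) = a_0(1 - t p_0) + t p_1, \qquad b(t;\lambda_0,\varepsilon_0) = b_0(1 + t p_0),
\]
where $p_0$ and $p_1$ depend on $\varepsilon_0$ via the integrals in \eqref{initialMomenta} and \eqref{p1def}. The hypothesis $T(\lambda_0) > t$ together with Definition \ref{lifetimes.def} and monotone convergence shows that $t_\ast(\lambda_0, \varepsilon_0) = 1/p_0 \to T(\lambda_0) > t$ as $\varepsilon_0 \to 0^+$, so for all sufficiently small $\varepsilon_0$ the lifetime strictly exceeds $t$ and the formulas above are valid at time $t$.

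Next I would take the limit $\varepsilon_0 \to 0^+$ in $p_0$ and $p_1$. Since $\lambda_0 = a_0 + ib_0$ lies outside the closed support of $\mu$, the function $x \mapsto 1/((a_0 - x)^2 + b_0^2)$ is bounded on $\operatorname{supp}(\mu)$, so dominated convergence gives $p_0 \to \int d\mu(x)/|\lambda_0 - x|^2$ and $p_1 \to \int x\, d\mu(x)/|\lambda_0 - x|^2$. Substituting and recombining the real and imaginary parts using $\frac{1}{\lambda_0 - x} = \frac{\overline{\lambda_0 - x}}{|\lambda_0 - x|^2} = \frac{(a_0 - x) - ib_0}{|\lambda_0 - x|^2}$, the limiting value of $\lambda(t;\lambda_0,\varepsilon_0) = a(t) + i b(t)$ collapses to
\[
\lambda_0 \;-\; t \int_{\mathbb{R}} \frac{1}{\lambda_0 - x}\, d\mu(x),
\]
which is the claimed formula.

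I do not expect any serious obstacle here: the system has been integrated in closed form, and the standing hypothesis that $\lambda_0$ is off the support of $\mu$ removes any singularity of the integrands. The only point that requires a moment's care is to confirm that $1/p_0$ stays bounded away from $t$ as $\varepsilon_0 \to 0^+$, so that we may legitimately evaluate $a(t)$ and $b(t)$ using Proposition \ref{solveODE.prop} for all small enough $\varepsilon_0$; this is exactly what the assumption $T(\lambda_0) > t$ provides.
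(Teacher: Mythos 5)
Your proposal is correct and follows essentially the same route as the paper: apply the closed-form solution of Proposition \ref{solveODE.prop} with the initial momenta (\ref{pb0})--(\ref{pa0}), note that the lifetime $1/p_{0}$ exceeds $t$ (guaranteed by $T(\lambda_{0})>t$), and pass to the limit $\varepsilon_{0}\rightarrow0^{+}$ in the momentum integrals, recombining real and imaginary parts into the Cauchy transform. Your explicit attention to the lifetime and to dominated convergence off $\mathrm{supp}(\mu)$ only makes explicit what the paper's shorter proof leaves implicit.
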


\begin{proof}
Using Proposition \ref{solveODE.prop}, we find that
\begin{align*}
\lambda(t;\lambda_{0},\varepsilon_{0})  &  =a(t)+ib(t)\\
&  =\lambda_{0}-\frac{t}{2}(p_{a,0}-ip_{b,0}).
\end{align*}
In the limit as $\varepsilon_{0}$ tends to zero, we have (provided
$\lambda_{0}$ is not in $\mathrm{supp}(\mu)\subset\mathbb{R}$)%
\[
p_{a,0}-ip_{b,0}=\int_{\mathbb{R}}\frac{2(a_{0}-x)}{(a_{0}-x)^{2}+b_{0}^{2}%
}~d\mu(x)-i\int_{\mathbb{R}}\frac{2b_{0}}{(a_{0}-x)^{2}+b_{0}^{2}}~d\mu(x).
\]
It is then easy to check that
\[
p_{a,0}-ip_{b,0}=2\int_{\mathbb{R}}\frac{1}{a_{0}+ib_{0}-x}~d\mu(x),
\]
which gives the claimed formula.
\end{proof}

\section{The domains\label{domain.sec}}

\subsection{The domain in the $\lambda_{0}$-plane\label{lambda0Domain.sec}}

We now define the first of two domains we will be interested in. When we apply
the Hamilton--Jacobi method in Section \ref{outside.sec}, we will try to find
solutions with $\varepsilon(t)$ very close to zero. Based on the formula for
$\varepsilon(t)$ in Proposition \ref{solveODE.prop}, it seems that we can make
$\varepsilon(t)$ small by making $\varepsilon_{0}$ small. The difficulty with
this approach, however, is that if we fix some $\lambda_{0}$ and let
$\varepsilon_{0}$ tend to zero, the lifetime of the path may be smaller than
$t.$ Thus, if the small-$\varepsilon_{0}$ lifetime of the path---as computed
by the function $T$ in Definition \ref{lifetimes.def}---is smaller than $t,$
the simple approach of letting $\varepsilon_{0}$ tend to zero will not work.
This observation motivates the following definition.

\begin{definition}
\label{lambdaT.def}Let $T$ be the function defined in Definition
\ref{lifetimes.def}. We then define a domain $\Lambda_{t}\subset\mathbb{C}$ by%
\[
\Lambda_{t}=\left\{  \left.  \lambda_{0}\in\mathbb{C}\right\vert T(\lambda
_{0})<t\right\}  .
\]
Explicitly, a point $\lambda_{0}=a_{0}+ib_{0}$ belongs to $\Lambda_{t}$ if and
only if%
\begin{equation}
\int_{\mathbb{R}}\frac{d\mu(x)}{(a_{0}-x)^{2}+b_{0}^{2}}>\frac{1}{t}.
\label{LambdaTdef2}%
\end{equation}

\end{definition}

This domain appeared originally in the work Biane \cite{BianeConvolution}, for
reasons that we will explain in Section \ref{bianeResult.sec}. The domain
$\Lambda_{t}$ also plays a crucial role in work of the second author with
Zhong \cite{HZ}. In Section \ref{lambdaDomain.sec}, we will consider another
domain $\Omega_{t},$ whose closure will be the support of the Brown measure of
$x_{0}+i\sigma_{t}.$ See Figure \ref{regionplots.fig} for plots of
$\Lambda_{t}$ and the corresponding domain $\Omega_{t}$.

We give now a more explicit description of the domain $\Lambda_{t}.$

\begin{proposition}
\label{vt.prop}For each $t>0,$ define a function $v_{t}:\mathbb{R}%
\rightarrow\lbrack0,\infty)$ as follows. For each $a_{0}\in\mathbb{R},$ if%
\begin{equation}
\int_{\mathbb{R}}\frac{1}{(a_{0}-x)^{2}}~d\mu(x)>\frac{1}{t},
\label{vtPositive}%
\end{equation}
let $v_{t}(a_{0})$ be the unique positive number such that%
\begin{equation}
\int_{\mathbb{R}}\frac{1}{(a_{0}-x)^{2}+v_{t}(a_{0})^{2}}~d\mu(x)=\frac{1}{t}.
\label{vtDef}%
\end{equation}
If, on the other hand,%
\begin{equation}
\int_{\mathbb{R}}\frac{1}{(a_{0}-x)^{2}}~d\mu(x)\leq\frac{1}{t},
\label{vtZero}%
\end{equation}
set $v_{t}(a_{0})=0.$

Then the function $v_{t}:\mathbb{R}\rightarrow\lbrack0,\infty)$ is continuous
and the domain $\Lambda_{t}$ may be described as%
\begin{equation}
\Lambda_{t}=\left\{  \left.  a_{0}+ib_{0}\in\mathbb{C}\right\vert ~\left\vert
b_{0}\right\vert <v_{t}(a_{0})\right\}  , \label{LambdaTdef3}%
\end{equation}
so that
\begin{equation}
\Lambda_{t}\cap\mathbb{R}=\left\{  \left.  a_{0}\in\mathbb{R}\right\vert
v_{t}(a_{0})>0\right\}  . \label{LambdaTIntersectR}%
\end{equation}

\end{proposition}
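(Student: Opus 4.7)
The plan is to reduce everything to properties of the single auxiliary function
\[
\phi(a,v)=\int_{\mathbb{R}}\frac{d\mu(x)}{(a-x)^{2}+v^{2}},\qquad a\in\mathbb{R},\ v\ge 0,
\]
where $\phi(a,v)\in(0,\infty]$ and $\phi(a,0)$ may be $+\infty$. First I would note that for $v>0$ the integrand is bounded uniformly in a neighborhood of any $(a,v)$, so $\phi$ is jointly continuous on $\mathbb{R}\times(0,\infty)$ by dominated convergence, and it is smooth in $v$ with $\partial_v\phi<0$ (strictly, since Assumption~\ref{notDirac.assumption} guarantees $\mu$ has more than one point of support, making the integrand non-constant). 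Moreover $\phi(a,v)\to 0$ as $v\to\infty$.

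Existence and uniqueness of $v_{t}(a_{0})$ in the regime (\ref{vtPositive}) then follow immediately: $\phi(a_{0},\cdot)$ is strictly decreasing and continuous on $[0,\infty)$, starts above $1/t$ at $v=0$, and tends to $0$, so there is a unique $v_{t}(a_{0})>0$ realizing $\phi(a_{0},v_{t}(a_{0}))=1/t$. The characterization (\ref{LambdaTdef3}) is then a direct consequence: strict monotonicity of $\phi(a_{0},\cdot)$ gives $\phi(a_{0},|b_{0}|)>1/t$ iff $|b_{0}|<v_{t}(a_{0})$ when $v_{t}(a_{0})>0$; and when $v_{t}(a_{0})=0$, i.e.\ $\phi(a_{0},0)\le 1/t$, monotonicity forces $\phi(a_{0},|b_{0}|)\le 1/t$ for all $b_{0}$, so no such $a_{0}+ib_{0}$ lies in $\Lambda_{t}$. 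The identity (\ref{LambdaTIntersectR}) is immediate from (\ref{LambdaTdef3}) upon setting $b_{0}=0$.

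The main obstacle is continuity of $v_{t}$, particularly at the boundary points where $v_{t}(a_{0})=0$ (since $\phi(a,0)$ can jump to $+\infty$ as $a$ moves into the support of $\mu$). On the open set where $v_{t}(a_{0})>0$, smoothness of $\phi$ together with $\partial_v\phi<0$ lets me apply the implicit function theorem to solve $\phi(a,v)=1/t$ locally, yielding continuous (even smooth) dependence on $a_{0}$. At a point $a_{0}$ with $v_{t}(a_{0})=0$, suppose for contradiction that $a_{n}\to a_{0}$ but $v_{t}(a_{n})\ge\delta>0$ along a subsequence. Since $\phi(a_{n},\cdot)$ is decreasing and equals $1/t$ at $v_{t}(a_{n})$, I get $\phi(a_{n},\delta)\ge 1/t$; passing to the limit using continuity of $\phi$ on $\mathbb{R}\times(0,\infty)$ yields $\phi(a_{0},\delta)\ge 1/t$. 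But strict monotonicity forces $\phi(a_{0},\delta)<\phi(a_{0},0)\le 1/t$, a contradiction. Hence $v_{t}(a_{n})\to 0=v_{t}(a_{0})$, completing continuity.

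A small technical point I would verify at the outset is that the map $a\mapsto\phi(a,\delta)$ is indeed continuous on all of $\mathbb{R}$ for each fixed $\delta>0$: this is dominated convergence with dominating function $1/\delta^{2}$, and handles the potentially delicate limits entering the boundary continuity argument.
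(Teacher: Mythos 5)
Your proof is correct. For existence and uniqueness of $v_{t}(a_{0})$ and for the equivalence of (\ref{LambdaTdef3}) with the defining condition (\ref{LambdaTdef2}), you argue exactly as the paper does: strict monotonicity of $v\mapsto\int_{\mathbb{R}}\frac{d\mu(x)}{(a_{0}-x)^{2}+v^{2}}$ together with its decay as $v\to\infty$. Where you genuinely diverge is on continuity of $v_{t}$: the paper simply cites Lemma~2 of Biane's paper \cite{BianeConvolution}, whereas you give a self-contained argument---the implicit function theorem on the open set $\{v_{t}>0\}$ (using $\partial_{v}\phi<0$ there and the uniqueness of the positive solution to identify the local IFT branch with $v_{t}$), plus a contradiction argument at points where $v_{t}(a_{0})=0$, controlled by continuity of $a\mapsto\phi(a,\delta)$ via dominated convergence with dominating constant $1/\delta^{2}$. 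This buys independence from the external reference (and in fact yields smoothness of $v_{t}$ on $\{v_{t}>0\}$, which the statement does not require), at the cost of a somewhat longer argument; the paper's citation is shorter but leaves the delicate boundary case, where $\phi(a,0)$ can jump to $+\infty$, to Biane. One small inaccuracy worth fixing: strict monotonicity of $\phi(a_{0},\cdot)$ does not rely on Assumption~\ref{notDirac.assumption} at all---for every fixed $x$ the integrand $\frac{1}{(a_{0}-x)^{2}+v^{2}}$ is strictly decreasing in $v\geq0$, so the integral is strictly decreasing for any probability measure $\mu$; the non-$\delta$ assumption plays no role in this proposition. This misattribution does not affect the validity of any step, since the conclusion you invoke is true.
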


See Figure \ref{vtplots.fig} for some plots of the function $v_{t}.$

\begin{proof}
We first note that for any fixed $a_{0},$ the integral%
\begin{equation}
\int_{\mathbb{R}}\frac{1}{(a_{0}-x)^{2}+v^{2}}~d\mu(x) \label{integralWithV}%
\end{equation}
is a strictly decreasing function of $v\geq0$ and that the integral tends to
zero as $v$ tends to infinity. Thus, whenever condition (\ref{vtPositive})
holds, it is easy to see that there is a unique positive number $v_{t}(a_{0})$
for which (\ref{vtDef}) holds. Continuity of $v_{t}$ is established in
\cite[Lemma 2]{BianeConvolution}.

Using the monotonicity of the integral in (\ref{integralWithV}), it is now
easy to see that the characterization of the domain $\Lambda_{t}$ in
(\ref{LambdaTdef3}) is equivalent to the characterization in
(\ref{LambdaTdef2}).
\end{proof}

\begin{figure}[ptb]%
\centering
\includegraphics[
height=3.1185in,
width=3.5613in
]%
{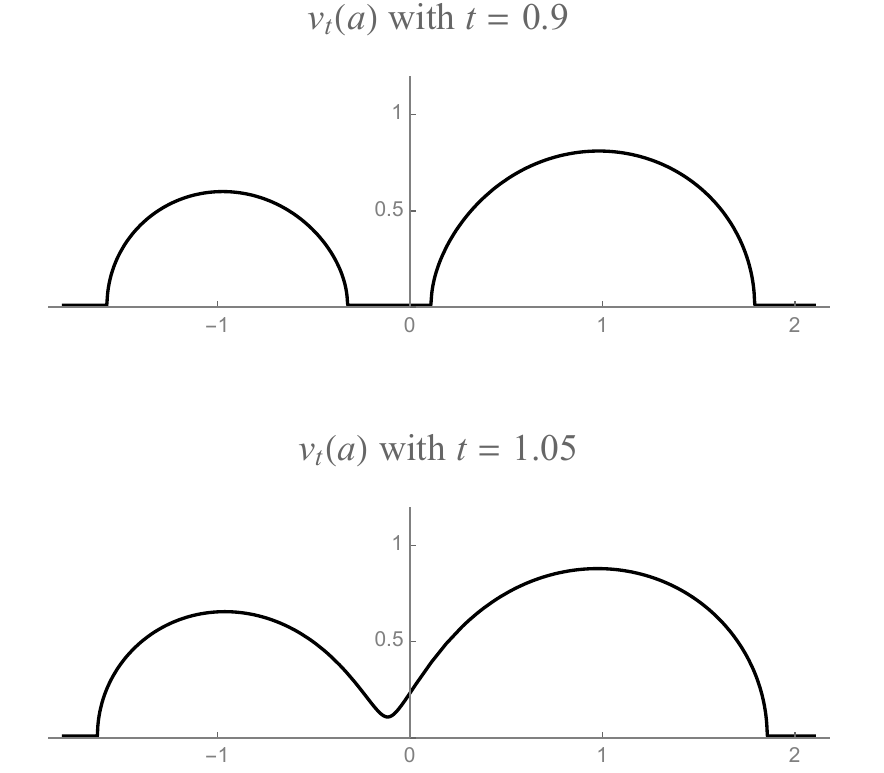}%
\caption{The function $v_{t}(a)$ for the case in which $\mu=\frac{1}{3}%
\delta_{-1}+\frac{2}{3}\delta_{1}.$}%
\label{vtplots.fig}%
\end{figure}

\subsection{The result of Biane\label{bianeResult.sec}}

We now explain how the domain $\Lambda_{t}$ arose in the work of Biane
\cite{BianeConvolution}. The results of Biane will be needed to formulate one
of our main results (Theorem \ref{push.thm}).

For any operator $A\in\mathcal{A},$ we let $G_{A}$ denote the Cauchy transform
of $A,$ also known as the Stieltjes transform or holomorphic Green's function,
defined as%
\begin{equation}
G_{A}(z)=\tau\lbrack(z-A)^{-1}] \label{GreensFunctionDef}%
\end{equation}
for all $z\in\mathbb{C}$ outside the spectrum of $A.$ Then $G_{A}$ is
holomorphic on its domain. If $A$ is self-adjoint, we can recover the
distribution of $A$ from its Cauchy transform by the Stieltjes inversion
formula. Even if $A$ is not self-adjoint, $G_{A}$ determines the holomorphic
moments of the Brown measure $\mathrm{Brown}(A)$ of $A,$ that is, the
integrals of $\lambda^{n}$ with respect to $\mathrm{Brown}(A).$ (We emphasize
that these holomorphic moments do not, in general, determine the Brown measure itself.)

Let $x_{0}$ be a self-adoint element of $\mathcal{A}$ and let $\sigma_{t}%
\in\mathcal{A}$ be a semicircular Brownian motion freely independent of
$x_{0}.$ Define a function $H_{t}$ by%
\begin{equation}
H_{t}(\lambda_{0})=\lambda_{0}+tG_{x_{0}}(\lambda_{0}). \label{htDef}%
\end{equation}
The significance of this function is from the following result of Biane
\cite{BianeConvolution}, which shows that the Cauchy transform of
$x_{0}+\sigma_{t}$ is related to the Cauchy transform of $x_{0}$ by the
formula%
\begin{equation}
G_{x_{0}+\sigma_{t}}(H_{t}(\lambda_{0}))=G_{x_{0}}(\lambda_{0}),
\label{bianeIdentity}%
\end{equation}
for $\lambda_{0}$ in an appropriate set, which we will specify shortly. Note
that this result is for the Cauchy transform of the self-adjoint operator
$x_{0}+\sigma_{t}$, not for $x_{0}+i\sigma_{t}.$

We now explain the precise domain (taken to be in the upper half-plane for
simplicity) on which the identity (\ref{bianeIdentity}) holds. Let%
\begin{equation}
\Delta_{t}=\left\{  \left.  a_{0}+ib_{0}\right\vert b_{0}>v_{t}(a_{0}%
)\right\}  , \label{DeltaDef}%
\end{equation}
which is just the set of points in the upper half-plane outside the closure of
$\Lambda_{t}.$ The boundary of $\Delta_{t}$ is then the graph of $v_{t}$:%
\[
\partial\Delta_{t}=\left\{  a_{0}+i\left.  v_{t}(a_{0})\right\vert a_{0}%
\in\mathbb{R}\right\}  .
\]

\begin{theorem}
[Biane]\label{BianeH.thm}First, the function $H_{t}$ is an injective conformal
map of $\Delta_{t}$ onto the upper half-plane. Second, $H_{t}$ maps
$\partial\Delta_{t}$ homeomorphically onto the real line. Last, the identity
(\ref{bianeIdentity}) holds for all $\lambda_{0}$ in $\Delta_{t}.$ Thus, we
may write%
\[
G_{x_{0}+\sigma_{t}}(\lambda)=G_{x_{0}}(H_{t}^{-1}(\lambda))
\]
for all $\lambda$ in the upper half-plane, where the inverse function
$H_{t}^{-1}$ is chosen to map into $\Delta_{t}.$
\end{theorem}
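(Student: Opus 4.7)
My plan is to combine Voiculescu's $R$-transform addition formula with a mapping/degree argument in the upper half-plane. I would first derive the functional identity $G_{x_0+\sigma_t}(H_t(\lambda_0))=G_{x_0}(\lambda_0)$ algebraically near $\infty$, then use the analytic and topological properties of $H_t$ on $\Delta_t$ to extend the identity and upgrade it to a conformal bijection onto $\mathbb{H}^+:=\{z\in\mathbb{C}:\operatorname{Im} z>0\}$.

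For the algebraic identity, recall that the $R$-transform of a free semicircular of variance $t$ is $R_{\sigma_t}(z)=tz$. The addition formula gives $R_{x_0+\sigma_t}(z)=R_{x_0}(z)+tz$, which in terms of the inverse Cauchy transform $K_A(z):=G_A^{-1}(z)=R_A(z)+1/z$ becomes $K_{x_0+\sigma_t}(z)=K_{x_0}(z)+tz$. Setting $z=G_{x_0}(\lambda_0)$, so that $K_{x_0}(z)=\lambda_0$ for $|\lambda_0|$ large, yields
\[
K_{x_0+\sigma_t}(G_{x_0}(\lambda_0))=\lambda_0+tG_{x_0}(\lambda_0)=H_t(\lambda_0),
\]
hence $G_{x_0+\sigma_t}(H_t(\lambda_0))=G_{x_0}(\lambda_0)$ in a neighborhood of $\infty$.

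Next I would verify that $H_t$ maps $\Delta_t$ into $\mathbb{H}^+$ and $\partial\Delta_t$ into $\mathbb{R}$. Writing $\lambda_0=a_0+ib_0$ with $b_0>0$, a direct computation shows
\[
\operatorname{Im} H_t(\lambda_0)=b_0\left(1-t\int_{\mathbb{R}}\frac{d\mu(x)}{(a_0-x)^2+b_0^2}\right).
\]
By the defining property of $v_t$ in Proposition \ref{vt.prop} and the strict monotonicity of the integral in $b_0$, the parenthesized factor is strictly positive when $b_0>v_t(a_0)$ and vanishes precisely on $\partial\Delta_t$. Combined with the asymptotic $H_t(\lambda_0)=\lambda_0+O(1/|\lambda_0|)$ at $\infty$, this shows that $H_t$ extends continuously to $\overline{\Delta_t}$ and is proper as a map $\Delta_t\to\mathbb{H}^+$.

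Finally, to get the conformal bijection and globalize the identity, I would apply the argument principle. The asymptotic $H_t\sim\lambda_0$ at $\infty$ together with properness pins down the degree of $H_t:\Delta_t\to\mathbb{H}^+$ to be one, and a proper holomorphic map of degree one between planar domains is a conformal bijection. The boundary homeomorphism then follows from continuity of $H_t$ on $\overline{\Delta_t}$, injectivity on the Jordan-type curve $a_0\mapsto a_0+iv_t(a_0)$, and the fact that $H_t$ tends to $\pm\infty$ at the two ends of this curve. Once $H_t$ is a bijection, $G_{x_0+\sigma_t}\circ H_t$ and $G_{x_0}$ are two holomorphic functions on the connected set $\Delta_t$ agreeing near $\infty$, so they agree throughout. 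The main obstacle I anticipate is the degree step: one must rule out the possibility that $H_t$ wraps $\Delta_t$ multiple times over $\mathbb{H}^+$, which is delicate at points $a_0$ with $v_t(a_0)=0$, where $\partial\Delta_t$ pinches down to the real line and one cannot appeal to a simple open-mapping picture.
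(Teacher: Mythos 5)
This theorem is not proved in the paper at all: the text immediately following it simply points to Lemma 4 and Proposition 2 of \cite{BianeConvolution}, so your sketch is being measured against Biane's original argument rather than anything internal to the paper. Your route --- derive $G_{x_{0}+\sigma_{t}}(H_{t}(\lambda_{0}))=G_{x_{0}}(\lambda_{0})$ near infinity from the $R$-transform addition formula, compute $\operatorname{Im}H_{t}$ to see that $\Delta_{t}$ goes into the upper half-plane and $\partial\Delta_{t}$ into $\mathbb{R}$, upgrade $H_{t}$ to a conformal bijection by properness and a degree count, and then globalize the identity --- is a legitimate and viable strategy, close in spirit to Biane's. One simplification you miss: the globalization needs only that $H_{t}(\Delta_{t})$ lies in the upper half-plane, where $G_{x_{0}+\sigma_{t}}$ is automatically holomorphic because $x_{0}+\sigma_{t}$ is self-adjoint; then $G_{x_{0}+\sigma_{t}}\circ H_{t}$ and $G_{x_{0}}$ agree near infinity and hence on the connected set $\Delta_{t}$ by the identity theorem, with no appeal to bijectivity.

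There are, however, two places where you assert exactly what has to be proved. First, continuity of $H_{t}$ (equivalently of $G_{x_{0}}$) on $\overline{\Delta}_{t}$ does \emph{not} follow from the $\operatorname{Im}H_{t}$ computation plus the asymptotics at infinity: boundary points with $v_{t}(a_{0})=0$ can lie in $\mathrm{supp}(\mu)$ (e.g.\ $\mu=3x^{2}\,dx$ on $[0,1]$ for small $t$, as noted after Lemma \ref{notIntersect.lem}), and continuity up to the boundary requires the uniform bound $\int_{\mathbb{R}}d\mu(x)/|\lambda_{0}-x|^{2}\leq1/t$ on $\overline{\Delta}_{t}$ --- this is precisely Lemma 3 of \cite{BianeConvolution}, and without it your properness claim, and hence the degree count, has no foundation. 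Relatedly, the assertion that the factor $1-t\int d\mu(x)/((a_{0}-x)^{2}+b_{0}^{2})$ ``vanishes precisely on $\partial\Delta_{t}$'' is wrong at boundary points with $v_{t}(a_{0})=0$: there $\operatorname{Im}H_{t}=0$ because $b_{0}=0$, while the factor can be strictly positive. Second, the boundary statement needs an actual argument: injectivity of $H_{t}$ on the curve $a_{0}\mapsto a_{0}+iv_{t}(a_{0})$ is not automatic from continuity; one either invokes Carath\'eodory's theorem (both $\Delta_{t}\cup\{\infty\}$ and the upper half-plane are Jordan domains on the sphere) or proves monotonicity of $a_{0}\mapsto H_{t}(a_{0}+iv_{t}(a_{0}))$ directly, by Cauchy--Schwarz/derivative estimates of the kind the paper carries out for $J_{t}$ in Proposition \ref{Jprops.prop}. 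By contrast, the step you flag as the main obstacle is actually the safe one: since $|G_{x_{0}}|\leq t^{-1/2}$ on $\overline{\Delta}_{t}$ (Cauchy--Schwarz with the same integral bound), every preimage of $w$ lies within distance $t^{1/2}$ of $w$, so for $w=iy$ with $y$ large the preimage set sits in a convex region far from $\mathrm{supp}(\mu)$ where $H_{t}$ is injective; properness then gives degree one, and the pinch points of $\partial\Delta_{t}$ play no role in that count.
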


See Lemma 4 and Proposition 2 in \cite{BianeConvolution}. In the terminology
of Voiculescu \cite{Voi1,Voi2}, we may say that $H_{t}^{-1}$ is one of the
\textbf{subordination functions} for the sum $x_{0}+\sigma_{t},$ meaning that
one can compute $G_{x_{0}+\sigma_{t}}$ from $G_{x_{0}}$ by composing with
$H_{t}^{-1}.$ Since $x_{0}+\sigma_{t}$ is self-adjoint, one can then compute
the distribution of $x_{0}+\sigma_{t}$ from its Cauchy transform. We remark
that Biane denotes the map $H_{t}^{-1}$ by $F_{t}$ on p. 710 of
\cite{BianeConvolution}.

\subsection{The domain in the $\lambda$-plane\label{lambdaDomain.sec}}

Our strategy in applying the Hamilton--Jacobi method will be in two stages. In
the first stage, we attempt to make $\varepsilon(t)$ close to zero by taking
$\varepsilon_{0}$ close to zero. For this strategy to work, we must have
$\lambda_{0}$ outside the closure of the domain $\Lambda_{t}$ introduced in
Section \ref{lambda0Domain.sec}. We will then solve the system of ODEs
(\ref{HamSystem}) in the limit as $\varepsilon_{0}$ approaches zero, using
Proposition \ref{smallEpsilon0.prop}. Let us define a map $J_{t}$ by
\begin{equation}
J_{t}(\lambda_{0})=\lambda_{0}-tG_{x_{0}}(\lambda_{0}), \label{jtDef}%
\end{equation}
which differs from the function $H_{t}$ in Section \ref{bianeResult.sec} by a
change of sign. (See Section \ref{jnpwz.sec} for a different perspective on
how this function arises.) With this notation, Proposition
\ref{smallEpsilon0.prop} says that if $\lambda(0)=\lambda_{0}$ and
$\varepsilon_{0}$ approaches zero, then%
\[
\lambda(t)=J_{t}(\lambda_{0}),
\]
provided that $\lambda_{0}$ is outside the closure of $\Lambda_{t}.$ Thus, the
first stage of our analysis will allow us to compute the Brown measure at
points of the form $J_{t}(\lambda_{0})$ with $\lambda_{0}\notin\overline
{\Lambda}_{t}.$ We will find that the Brown measure is zero in a neighborhood
of any such point. A second stage of the analysis will then be required to
compute the Brown measure at points inside $\overline{\Lambda}_{t}.$

The discussion the previous paragraph motivates the following definition.

\begin{definition}
\label{omegaT.def}For each $t>0,$ define a domain $\Omega_{t}$ in $\mathbb{C}$
by%
\[
\Omega_{t}=[J_{t}(\Lambda_{t}^{c})]^{c}.
\]
That is to say, the complement of $\Omega_{t}$ is the image under $J_{t}$ of
the complement of $\Lambda_{t}.$
\end{definition}

See Figure \ref{regionplots.fig} for plots of the domains $\Lambda_{t}$ and
$\Omega_{t}.$%

\begin{figure}[ptb]%
\centering
\includegraphics[
height=6.0165in,
width=5.6273in
]%
{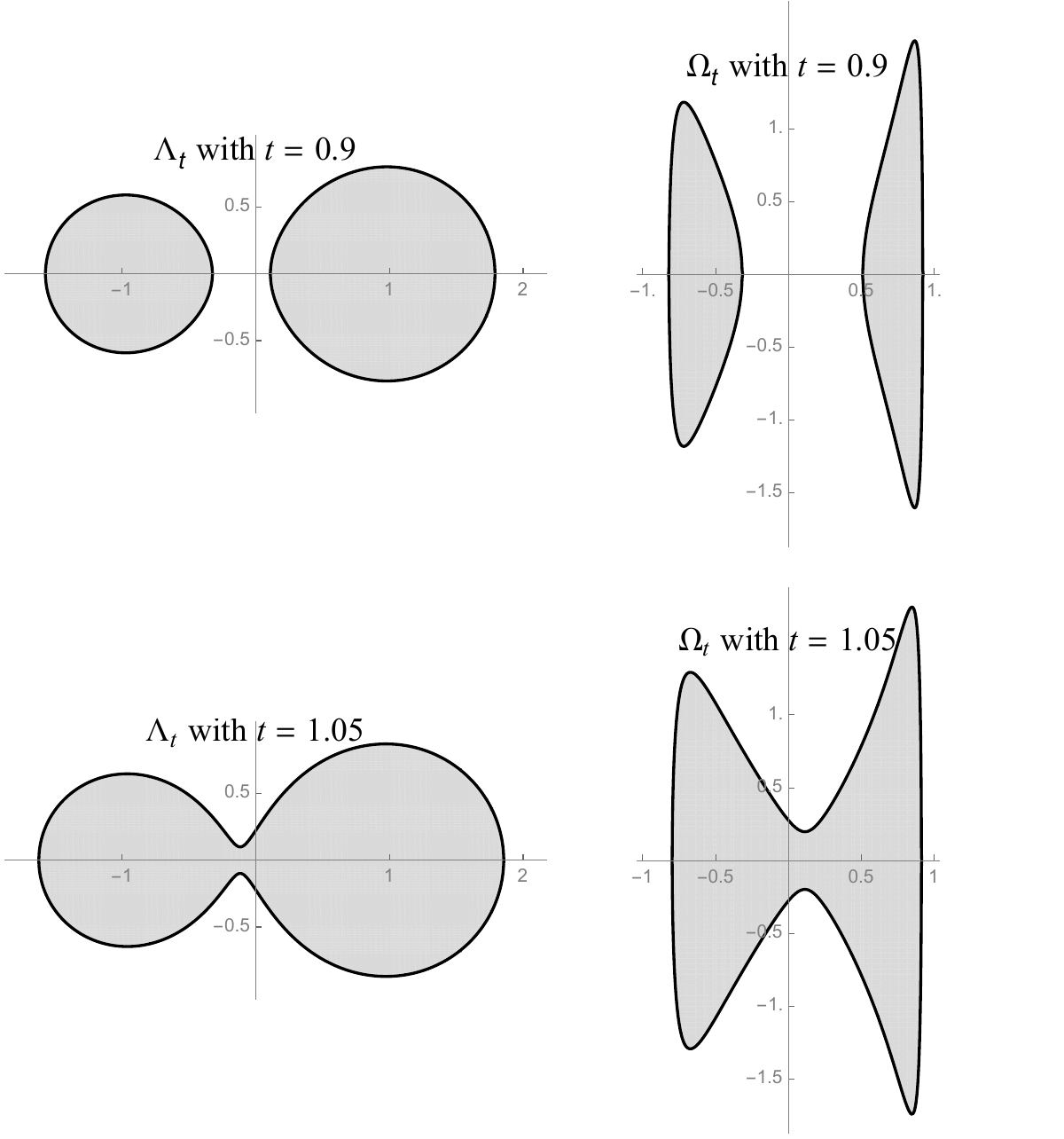}%
\caption{The regions $\Lambda_{t}$ and $\Omega_{t}$ for $\mu=\frac{1}{3}%
\delta_{-1}+\frac{2}{3}\delta_{1}$.}%
\label{regionplots.fig}%
\end{figure}

We recall our standing assumption that $\mu$ is not a $\delta$-measure and we
remind the reader that the set $\Delta_{t}$ in (\ref{DeltaDef}) is the region
above the graph of $v_{t}$ so that $\overline{\Delta}_{t}$ is the set of
points on or above the graph of $v_{t}.$

\begin{proposition}
\label{Jprops.prop}The following results hold.

\begin{enumerate}
\item \label{jInject.point}The map $J_{t}$ is well-defined, continuous, and
injective on $\overline{\Delta}_{t}.$

\item \label{datDt.point}Define a function $a_{t}:\mathbb{R}\rightarrow
\mathbb{R}$ by%
\begin{equation}
a_{t}(a_{0})=\operatorname{Re}[J_{t}(a_{0}+iv_{t}(a_{0}))]. \label{atDef}%
\end{equation}
Then at any point $a_{0}$ with $v_{t}(a_{0})>0,$ the function $a_{t}$ is
differentiable and satisfies%
\[
0<\frac{da_{t}}{da_{0}}<2.
\]

\item \label{atHomeo.point}The function $a_{t}$ is continuous and strictly
increasing and maps $\mathbb{R}$ onto $\mathbb{R}.$

\item \label{graphs.point}The map $J_{t}$ maps the graph of $v_{t}$ to the
graph of a function, which we denote by $b_{t}.$ The function $b_{t}$
satisfies
\begin{equation}
b_{t}(a_{t}(a_{0}))=2v_{t}(a_{0}) \label{eq:btDef}%
\end{equation}
for all $a_{0}\in\mathbb{R}.$

\item \label{aboveGraphs.point}The map $J_{t}$ takes the region above the
graph of $v_{t}$ \emph{onto} the region above the graph of $b_{t}.$

\item \label{OmegaGraph.point}The set $\Omega_{t}$ defined in Definition
\ref{omegaT.def} may be computed as%
\[
\Omega_{t}=\left\{  \left.  a+ib\in\mathbb{C}\right\vert ~\left\vert
b\right\vert <b_{t}(a)\right\}  .
\]

\end{enumerate}
\end{proposition}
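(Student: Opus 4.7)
The plan is to build outward from one explicit boundary computation. Using the defining relation $\int [(a_0-x)^2+v_t(a_0)^2]^{-1}\,d\mu(x) = 1/t$ for $v_t$, one reads off that $\operatorname{Im}[G_{x_0}(a_0+iv_t(a_0))] = -v_t(a_0)/t$, and hence
\[
J_t(a_0+iv_t(a_0)) = a_t(a_0) + 2iv_t(a_0),
\]
where $a_t(a_0) = a_0 - t\int (a_0-x)[(a_0-x)^2+v_t(a_0)^2]^{-1}\,d\mu(x)$. This identity pins down $J_t$ on $\partial \Delta_t$ explicitly and turns part (4) into a direct consequence of the homeomorphism claim in part (3), with $b_t := 2v_t \circ a_t^{-1}$.

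I would tackle part (2) next, as it is the analytic heart. Implicit differentiation of the defining relation of $v_t$ yields $v_t'(a_0) = -B/(v_t A)$, where
\[
A = \int \frac{d\mu(x)}{((a_0-x)^2+v_t^2)^2}, \qquad B = \int \frac{(a_0-x)\,d\mu(x)}{((a_0-x)^2+v_t^2)^2}.
\]
Substituting into $da_t/da_0$ and using $\int (a_0-x)^2((a_0-x)^2+v_t^2)^{-2}\,d\mu = 1/t - v_t^2 A$ (from $(a_0-x)^2 = [(a_0-x)^2+v_t^2] - v_t^2$) gives the clean expression
\[
\frac{da_t}{da_0} = 2 - \frac{2t(v_t^2 A^2 + B^2)}{A}.
\]
The upper bound is immediate. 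For the lower bound, Cauchy--Schwarz applied to the integrands $f(x) = [(a_0-x)^2+v_t^2]^{-1}$ and $g(x) = (a_0-x)[(a_0-x)^2+v_t^2]^{-1}$ yields $B^2 \leq A(1/t-v_t^2 A)$, i.e., $v_t^2 A^2 + B^2 \leq A/t$; equality would force $f$ and $g$ to be proportional $\mu$-a.e., which would make $\mu$ a Dirac mass, contradicting Assumption \ref{notDirac.assumption}. Hence $0 < da_t/da_0 < 2$.

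Part (3) follows by combining (2) with a direct computation on the set $\{v_t = 0\}$, where $a_t(a_0) = a_0 - tG_{x_0}(a_0)$ has derivative $1 + t\int(a_0-x)^{-2}\,d\mu \geq 1$. Continuity of $a_t$ comes from continuity of $v_t$ (Proposition \ref{vt.prop}); global strict monotonicity follows from monotonicity on each region plus continuity at the interfaces; surjectivity follows from the asymptotic $a_t(a_0) \sim a_0$ as $|a_0|\to\infty$. For injectivity in part (1), I reuse the Cauchy--Schwarz engine: if $J_t(\lambda_1)=J_t(\lambda_2)$ with $\lambda_1\neq\lambda_2$ in $\overline{\Delta}_t$, then dividing by $\lambda_1-\lambda_2$ yields $1 = -t\int[(\lambda_1-x)(\lambda_2-x)]^{-1}\,d\mu$, and so
\[
1 = t\left|\int \frac{d\mu(x)}{(\lambda_1-x)(\lambda_2-x)}\right| \leq t\sqrt{\int \frac{d\mu}{|\lambda_1-x|^2}}\sqrt{\int \frac{d\mu}{|\lambda_2-x|^2}} \leq t\cdot\frac{1}{t} = 1,
\]
using $\lambda_i \in \overline{\Delta}_t \Rightarrow \int |\lambda_i-x|^{-2}\,d\mu \leq 1/t$. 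Equality in Cauchy--Schwarz again forces $\mu$ to be a Dirac mass, contradicting Assumption \ref{notDirac.assumption}.

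For part (5), $J_t'(\lambda_0) = 1 + t\int(\lambda_0-x)^{-2}\,d\mu$ satisfies $|J_t'(\lambda_0)-1| < 1$ on $\Delta_t$ by the same bound, so $J_t'$ is nonvanishing and $J_t(\Delta_t)$ is open; $J_t$ is proper since $J_t(\lambda_0)\sim\lambda_0$ at infinity; by (1) and (4) the boundary of $J_t(\Delta_t)$ in $\mathbb{H}$ is the graph of $b_t$; and a standard connectedness argument then identifies $J_t(\Delta_t)$ with the open region above the graph of $b_t$. Part (6) is formal: by the conjugation symmetry $J_t(\overline\lambda) = \overline{J_t(\lambda)}$, the analogous statement holds for the lower component $\{b_0 < -v_t(a_0)\}$, and together we obtain $J_t(\Lambda_t^c) = \{|b|\geq b_t(a)\}$, whose complement is $\Omega_t = \{|b| < b_t(a)\}$, matching Definition \ref{omegaT.def}. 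The main obstacle is the derivative-plus-Cauchy--Schwarz computation in (2): executing the implicit differentiation cleanly and recognizing that Assumption \ref{notDirac.assumption} is exactly what makes the inequality strict. Once this is in hand, (1) reuses the same mechanism and (3), (5), (6) follow by topological arguments.
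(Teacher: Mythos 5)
Your proposal is correct in outline and reaches all six conclusions, but two of its steps take genuinely different routes from the paper. For Point \ref{datDt.point}, the paper bounds $\left\vert G_{x_{0}}^{\prime}(a_{0}+iv_{t}(a_{0}))\right\vert$ strictly below $1/t$ (a phase argument using Assumption \ref{notDirac.assumption}) and then splits $\frac{d}{da_{0}}G_{x_{0}}(a_{0}+iv_{t}(a_{0}))$ into real and imaginary parts; you instead differentiate the defining equation of $v_{t}$ implicitly and arrive at the closed formula $da_{t}/da_{0}=2-2t(v_{t}^{2}A^{2}+B^{2})/A$, after which Cauchy--Schwarz applied to $A$, $B$, $C=1/t-v_{t}^{2}A$ gives both bounds at once. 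The two arguments are equivalent in content (both use the non-Dirac hypothesis for strictness, and your equality case really does force a point mass), but your formula is more explicit. For Point \ref{aboveGraphs.point}, the paper argues via a Jordan-curve/closed-disk argument on the Riemann sphere, while you use openness of $J_{t}$ (nonvanishing derivative from $\left\vert J_{t}^{\prime}-1\right\vert<1$ on $\Delta_{t}$), properness from $J_{t}(\lambda_{0})\sim\lambda_{0}$ at infinity, and an open-and-closed argument in the connected region above the graph of $b_{t}$; this works, though the ``standard connectedness argument'' should be spelled out (the image avoids the graph of $b_{t}$ by injectivity on $\overline{\Delta}_{t}$, meets the region above it near infinity, and is closed in it by properness). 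Points \ref{atHomeo.point}, \ref{graphs.point}, and \ref{OmegaGraph.point} follow the paper essentially verbatim.

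One small imprecision in Point \ref{jInject.point}: equality in Cauchy--Schwarz does not by itself force $\mu$ to be a Dirac mass. It forces $1/(\lambda_{1}-x)$ to be proportional to $1/(\bar{\lambda}_{2}-x)$ for $\mu$-almost every $x$, and the case where the proportionality constant is $1$, i.e.\ $\lambda_{1}=\bar{\lambda}_{2}$, is compatible with a non-Dirac $\mu$ and must be excluded separately --- either, as in the paper, by noting it would make $\int\left\vert\lambda_{1}-x\right\vert^{-2}d\mu(x)$ equal to the negative number $-1/t$, or by observing that two distinct points of $\overline{\Delta}_{t}$ both lie in the closed upper half-plane and hence cannot be complex conjugates. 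This is a one-line fix, but as written that step of your injectivity argument has a gap.
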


Since $J_{t}(z)=2z-H_{t}(z)$ and $H_{t}(a_{0}+iv_{t}(a_{0}))$ is real, we see
that $a_{t}(a_{0})=2a_{0}-H_{t}(a_{0}+iv_{t}(a_{0}))$. Lemma 5 of
\cite{BianeConvolution} and Theorem 3.14 of \cite{HZ} show that $0<H_{t}%
^{\prime}(a_{0}+iv_{t}(a_{0})\leq2$, which means $0\leq a_{t}^{\prime}%
(a_{0})<2$. Thus, Point \ref{datDt.point} improves the result to
$0<a_{t}^{\prime}(a_{0})<2$.

The proof requires $\mu$ to have more than one point in its support in order
to prove $a_{t}^{\prime}(a_{0}) \neq0$. When $\mu=\delta_{0}$, it can be
computed that $J_{t}(z)=z-\frac{t}{z}$ and $a_{0}+iv_{t}(a_{0})$ is the upper
semicircle of radius $\sqrt{t}$. Therefore, $\operatorname{Re}[J_{t}%
(a_{0}+iv_{t}(a_{0}))]=0$ for all $a_{0}\in\Lambda_{t}\cap\mathbb{R}$, and its
derivative is constantly $0$ on $\Lambda_{t}\cap\mathbb{R}$.

The proof is similar to the proof in \cite{BianeConvolution} of similar
results about the map $H_{t}$.

\begin{proof}
Continuity of $J_{t}$ on $\overline{\Delta}_{t}$ follows from \cite[Lemma
3]{BianeConvolution}, which shows continuity of $G_{x_{0}}$ on $\overline
{\Delta}_{t}.$ To show injectivity of $J_{t}$, suppose, toward a
contradiction, that $J_{t}(z_{1})=J_{t}(z_{2})$, for some $z_{1}\neq z_{2}$ in
$\overline{\Delta}_{t}$. Then, using the definition (\ref{jtDef}) of $J_{t}$,
we have
\[
t(G_{\mu}(z_{2})-G_{\mu}(z_{1}))=z_{2}-z_{1}.
\]
This shows
\[
t\int_{\mathbb{R}}\frac{z_{1}-z_{2}}{(z_{1}-x)(z_{2}-x)}\,d\mu(x)=z_{2}%
-z_{1}.
\]
Since we are assuming that $z_{1}$ and $z_{2}$ are distinct, we can divide by
$z_{1}-z_{2}$ to obtain
\begin{equation}
\int_{\mathbb{R}}\frac{d\mu(x)}{(z_{1}-x)(z_{2}-x)}=-\frac{1}{t}.
\label{eq:IntProduct}%
\end{equation}

Since $z_{1},z_{2}\in\overline{\Delta}_{t}$, we have $T(z_{1})\leq1/t$ and
$T(z_{2})\leq1/t$. Thus, by the Cauchy--Schwarz inequality,
\[
\left\vert \int_{\mathbb{R}}\frac{d\mu(x)}{(z_{1}-x)(z_{2}-x)}\right\vert
^{2}\leq\int_{\mathbb{R}}\frac{d\mu(x)}{|z_{1}-x|^{2}}\int_{\mathbb{R}}%
\frac{d\mu(x)}{|z_{2}-x|^{2}}\leq\frac{1}{t^{2}}.
\]
By (\ref{eq:IntProduct}), we have equality in the above Cauchy--Schwarz
inequality. Therefore, there exists an $\alpha\in\mathbb{C}$ such that the
relation
\begin{equation}
\frac{1}{\bar{z}_{2}-x}=\frac{\alpha}{z_{1}-x}, \label{CSeq1}%
\end{equation}
or, equivalently,%
\begin{equation}
(\alpha-1)x=\alpha\bar{z}_{2}-z_{1} \label{CSeq2}%
\end{equation}
holds for $\mu$-almost every $x$. Since $\mu$ is assumed not to be a $\delta
$-measure, we must have $\alpha=1,$ or else $x$ would equal the constant value
$(\alpha\bar{z}_{2}-z_{1})/(\alpha-1)$ for $\mu$-almost every $x.$ With
$\alpha=1,$ we find that $z_{1}=\bar{z}_{2}.$ But now if we substitute
$z_{1}=\bar{z}_{2}$ into (\ref{eq:IntProduct}), we obtain
\[
\int_{\mathbb{R}}\frac{d\mu(x)}{|z_{1}-x|^{2}}=-\frac{1}{t}%
\]
which is impossible. This shows $z_{1}$ and $z_{2}$ cannot be distinct and
Point \ref{jInject.point} is established.

For Point \ref{datDt.point}, fix $a_{0}$ with $v_{t}(a_{0})>0$. We compute
that
\[
G_{x_{0}}^{\prime}(\lambda_{0})=-\int_{\mathbb{R}}\frac{d\mu(x)}{(\lambda
_{0}-x)^{2}}%
\]
so that
\begin{equation}
\left\vert G_{x_{0}}^{\prime}(a_{0}+iv_{t}(a_{0}))\right\vert \leq
\int_{\mathbb{R}}\frac{d\mu(x)}{(a_{0}-x)^{2}+v_{t}(a_{0})^{2}}=\frac{1}{t}.
\label{eq:CauchyBound}%
\end{equation}
We claim that this inequality must be strict. Otherwise, we would have
equality in the ``putting the absolute value inside the integral'' inequality.
This would mean, by the proof of Theorem 1.33 of \cite{Rudin}, that
\[
\frac{1}{(\lambda_{0}-x)^{2}}%
\]
would have the same phase for $\mu$-almost every $x$. But since $\lambda_{0}$
is in the upper half plane, the phase of $\lambda_{0}-x$ increases from $0$ to
$\pi$ as $x$ increases from $-\infty$ to $\infty$. Thus, the phase of
$1/(\lambda_{0}-x)^{2}$ decreases from $2\pi$ to $0$ as $x$ increases from
$-\infty$ to $\infty$. Therefore, $1/(\lambda_{0}-x)^{2}$ cannot have the same
phase $\mu$-almost every $x$ unless $\mu$ is a $\delta$-measure.

Now,
\[
\frac{d}{da_{0}}G_{x_{0}}(a_{0}+iv_{t}(a_{0}))=G_{x_{0}}^{\prime}(a_{0}%
+iv_{t}(a_{0}))\left(  1+i\frac{dv_{t}(a_{0})}{da_{0}}\right)  .
\]
Since (\ref{eq:CauchyBound}) is a strict inequality,
\begin{equation}
\left\vert \frac{d}{da_{0}}G_{x_{0}}(a_{0}+iv_{t}(a_{0}))\right\vert
^{2}<\frac{1}{t^{2}}\left(  1+\left(  \frac{dv_{t}(a_{0})}{da_{0}}\right)
^{2}\right)  . \label{eq:CauchyBound2}%
\end{equation}
Since
\[
\operatorname{Im}[G_{x_{0}}(a_{0}+iv_{t}(a_{0}))]=-v_{t}(a_{0})\int%
_{\mathbb{R}}\frac{d\mu(x)}{(a_{0}-x)^{2}+v_{t}(a_{0})^{2}}=-\frac{v_{t}%
(a_{0})}{t},
\]
we have
\[
\left(  \frac{d}{da_{0}}\operatorname{Im}[G_{x_{0}}(a_{0}+iv_{t}%
(a_{0}))]\right)  ^{2}=\frac{1}{t^{2}}\frac{dv_{t}(a_{0})}{a_{0}}%
\]
and (\ref{eq:CauchyBound2}) becomes
\[
\left(  \frac{d}{da_{0}}\operatorname{Re}[G_{x_{0}}(a_{0}+iv_{t}%
(a_{0}))]\right)  ^{2}<\frac{1}{t^{2}}.
\]
This shows, using the definition $a_{t}(a_{0})=\operatorname{Re}[J_{t}%
(a_{0}+iv_{t}(a_{0}))]$,
\[
a_{t}^{\prime}(a_{0})=1-t\frac{d}{da_{0}}\operatorname{Re}[G_{x_{0}}%
(a_{0}+iv_{t}(a_{0}))]\in(0,2),
\]
as claimed.

We now turn to Point \ref{atHomeo.point}. To show that the function
$a_{t}(a_{0})$ is strictly increasing with $a_{0},$ we use two observations.
First, by Point \ref{datDt.point}, $a_{t}$ is increasing at any point $a_{0}$
where $v_{t}(a_{0})>0.$ Second, when $v_{t}(a_{0})=0,$ we have%
\begin{equation}
a_{t}(a_{0})=a_{0}-t\int_{\mathbb{R}}\frac{1}{a_{0}-x}~d\mu(x).
\label{atRealCase}%
\end{equation}
We claim that the right-hand side of (\ref{atRealCase}) is an increasing
function of $a_{0}.$ Suppose that $a_{0}<a_{1}$ and $v_{t}(a_{0})=v_{t}%
(a_{1})=0$. We compute
\begin{equation}
a_{t}(a_{1})-a_{t}(a_{0})=(a_{1}-a_{0})\left(  1+t\int\frac{1}{(a_{1}%
-x)(a_{0}-x)}\,d\mu(x)\right)  . \label{atDiff}%
\end{equation}
By Cauchy--Schwarz inequality and (\ref{vtZero}),
\[
\left\vert \int\frac{1}{(a_{1}-x)(a_{0}-x)}\,d\mu(x)\right\vert ^{2}\leq
\int\frac{d\mu(x)}{(a_{1}-x)^{2}}\int\frac{d\mu(x)}{(a_{0}-x)^{2}}\leq\frac
{1}{t^{2}};
\]
the Cauchy--Schwarz inequality is indeed strict by the reasoning leading to
(\ref{CSeq1}) and (\ref{CSeq2}). This proves that the right-hand side of
(\ref{atDiff}) is positive and we have established our claim.

Consider any two points $a_{0}$ and $a_{1}$ with $a_{0}<a_{1}$; we wish to
show that $a_{t}(a_{0})<a_{t}(a_{1}).$

We consider four cases, corresponding to whether $v_{t}(a_{0})$ and
$v_{t}(a_{1})$ are zero or positive. If $v_{t}(a_{0})$ and $v_{t}(a_{1})$ are
both zero, we use (\ref{atRealCase}) and immediately conclude that
$a_{t}(a_{0})<a_{t}(a_{1}).$ If $v_{t}(a_{0})=0$ but $v_{t}(a_{1})>0,$ then
let $\alpha$ be the infimum of the interval $I$ around $a_{1}$ on which
$v_{t}$ is positive, so that $v_{t}(\alpha)=0$ and $a_{0}\leq\alpha.$ Then
$a_{t}(a_{0})\leq a_{t}(\alpha)$ by (\ref{atRealCase})\ and $a_{t}%
(\alpha)<a_{t}(a_{1})$ by the positivity of $a_{t}^{\prime}$ on $I.$ The
remaining cases are similar; the case where both $v_{t}(a_{0})$ and
$v_{t}(a_{1})$ are positive can be subdivided into two cases depending on
whether or not $a_{0}$ and $a_{1}$ are in the same interval of positivity of
$v_{t}.$

Finally, we show that $a_{t}$ maps $\mathbb{R}$ onto $\mathbb{R}.$ Since
$x_{0}$ is assumed to be bounded, the law $\mu$ of $x_{0}$ is compactly
supported. It then follows easily from the condition (\ref{vtZero}) for
$v_{t}$ to be zero that $v_{t}(a_{0})=0$ whenever $\left\vert a_{0}\right\vert
$ is large enough. Thus, for $\left\vert a_{0}\right\vert $ large, the formula
(\ref{atRealCase}) applies, and we can easily see that $\lim_{a_{0}%
\rightarrow-\infty}a_{t}(a_{0})=-\infty$ and $\lim_{a_{0}\rightarrow+\infty
}a_{t}(a_{0})=+\infty.$

For Point \ref{graphs.point}, it follows easily from Point \ref{atHomeo.point}
and the definition (\ref{atDef}) of $a_{t}$ that $J_{t}$ maps the graph of
$v_{t}$ to the graph of a function. When then note that (\ref{eq:btDef}) holds
when $v_{t}(a_{0})=0$---both sides are zero. To establish (\ref{eq:btDef})
when $v_{t}(a_{0})>0$, we compute that
\begin{align*}
\operatorname{Im}[J_{t}(a_{0}+iv_{t}(a_{0}))]  &  =v_{t}(a_{0}%
)-t\operatorname{Im}\int_{\mathbb{R}}\frac{1}{a_{0}+iv_{t}(a_{0})-x}~d\mu(x)\\
&  =v_{t}(a_{0})+tv_{t}(a_{0})\int_{\mathbb{R}}\frac{1}{(a_{0}-x)^{2}%
+v_{t}(a_{0})^{2}}~d\mu(x)\\
&  =2v_{t}(a_{0}),
\end{align*}
by the defining property (\ref{vtDef}) of $v_{t}.$

For Point \ref{aboveGraphs.point}, we note that the graph of $v_{t},$ together
with the point at infinity, forms a Jordan curve in the Riemann sphere, with
the region above the graph as the interior of the disk---and similarly with
$v_{t}$ replaced by $b_{t}.$ Since $J_{t}(\lambda_{0})$ tends to infinity as
$\lambda_{0}$ tends to infinity, $J_{t}$ defines a continuous map of the
closed disk bounded by $\mathrm{graph}(v_{t})\cup\{\infty\}$ to the closed
disk bounded by $\mathrm{graph}(b_{t})\cup\{\infty\},$ and this map is a
homeomorphism on the boundary. By an elementary topological argument, $J_{t}$
must map the closed disk \textit{onto} the closed disk.

Finally, for Point \ref{OmegaGraph.point}, we use the description of
$\Lambda_{t}$ in Proposition \ref{vt.prop} as the region bounded by the graphs
of $v_{t}$ and $-v_{t}.$ The complement of $\Lambda_{t}$ thus consists of the
region on or above the graph of $v_{t}$ or on or below the graph of $-v_{t}.$
By Point \ref{aboveGraphs.point} and the fact that $J_{t}$ commutes the
complex conjugation, $J_{t}$ will map the complement of $\Lambda_{t}$ to the
region on or above the graph of $b_{t}$ or on or below the graph of $-b_{t}.$
Thus, from Definition \ref{omegaT.def}, $\Omega_{t}$ will be the region
bounded by the graphs of $b_{t}$ and $-b_{t}.$
\end{proof}

\subsection{The method of Janik, Nowak, Papp, Wambach, and
Zahed\label{jnpwz.sec}}

We now discuss the work of Janik, Nowak, Papp, Wambach, and Zahed \cite{JNPWZ}
which gives a nonrigorous but illuminating method of computing the support of
the Brown measure of $x_{0}+i\sigma_{t}.$ (See especially Section V of
\cite{JNPWZ}.) This method does not say anything about the Brown measure
besides what its support should be. Furthermore, it is independent of the
method used by Jarosz and Nowak in \cite{JN1,JN2} and discussed in Section
\ref{jn.sec}.

Recall the definition of the Cauchy transform of an operator $A$ in
(\ref{GreensFunctionDef}). We note that if $\lambda$ is outside the spectrum
of $A,$ then we may safely put $\varepsilon=0$ in the function%
\[
S_{A}(\lambda,\varepsilon):=\tau\lbrack\log((A-\lambda)^{\ast}(A-\lambda
)+\varepsilon)].
\]
Using the formula (\ref{diffLog}) for the derivative of the trace of the
logarithm, we can easily compute that%
\[
\frac{\partial}{\partial\lambda}S_{A}(\lambda,0)=\tau\lbrack(\lambda
-A)^{-1}]=G_{A}(\lambda).
\]
But since $G_{A}(\lambda)$ depends holomorphically on $\lambda,$ we find that%
\begin{align*}
\Delta_{\lambda}S_{A}(\lambda,0)  &  =4\frac{\partial}{\partial\bar{\lambda}%
}\frac{\partial}{\partial\lambda}S_{A}(\lambda,0)\\
&  =4\frac{\partial}{\partial\bar{\lambda}}G_{A}(\lambda)\\
&  =0,
\end{align*}
so that the Brown measure is zero. This argument shows that the Brown measure
is zero outside the spectrum of $A.$

Now, in the case $A=x_{0}+i\sigma_{t},$ the authors of \cite{JNPWZ} attempt to
determine the maximum set on which the function
\[
\frac{\partial}{\partial\lambda}S(t,\lambda,0)
\]
remains holomorphic. We start with Biane's subordination function identity
(\ref{bianeIdentity}), which we rewrite as follows. Let $\sigma$ be a fixed
semicircular element, so that the law of $\sigma_{t}$ is the same as that of
$\sqrt{t}\sigma.$ Then set $u=\sqrt{t},$ so that (\ref{bianeIdentity}) reads
as
\[
\tau\lbrack\{\lambda+u^{2}G_{x_{0}}(\lambda)-(x_{0}+u\sigma)\}^{-1}]=G_{x_{0}%
}(\lambda).
\]
We then formally analytically continue to $u=i\sqrt{t},$ giving%
\[
\tau\lbrack\{\lambda-tG_{x_{0}}(\lambda)-(x_{0}+i\sqrt{t}\sigma)\}^{-1}%
]=G_{x_{0}}(\lambda).
\]
Thus,%
\[
G_{x_{0}+\sigma_{t}}(\lambda+tG_{x_{0}}(\lambda))=G_{x_{0}}(\lambda
)=G_{x_{0}+i\sigma_{t}}(\lambda-tG_{x_{0}}(\lambda)).
\]
In terms of the maps $H_{t}$ and $J_{t}$ defined in (\ref{htDef}) and
(\ref{jtDef}), respectively, we then have
\[
G_{x_{0}+i\sigma_{t}}(J_{t}(\lambda))=G_{x_{0}+\sigma_{t}}(H_{t}(\lambda))
\]
or%
\begin{equation}
G_{x_{0}+i\sigma_{t}}(J_{t}(H_{t}^{-1}(\lambda)))=G_{x_{0}+\sigma_{t}}%
(\lambda). \label{nowakID}%
\end{equation}
We also note that from the definitions (\ref{htDef}) and (\ref{jtDef}), we
have $J_{t}(\lambda)=2\lambda-H_{t}(\lambda),$ so that%
\begin{equation}
J_{t}(H_{t}^{-1}(z))=2H_{t}^{-1}(z)-z. \label{twoMapsId}%
\end{equation}

Then since the right-hand side of (\ref{nowakID}) is holomorphic on the whole
upper half-plane, the authors of \cite{JNPWZ} argue that the identity
(\ref{nowakID}) actually holds on the whole upper half-plane. If that claim
actually holds, we will have the identity
\begin{equation}
G_{x_{0}+i\sigma_{t}}(z)=G_{x_{0}+\sigma_{t}}(H_{t}(J_{t}^{-1}(z)))
\label{nowakID2}%
\end{equation}
for all $z$ in the range of $J_{t}\circ H_{t}^{-1},$ namely for all $z$ (in
the upper half-plane) outside the closure of $\Omega_{t}.$ An exactly parallel
argument then applies in the lower half-plane. The authors thus wish to
conclude that $G_{x_{0}+i\sigma_{t}}$ is defined and holomorphic on the
complement of $\overline{\Omega}_{t},$ which would show that the Brown measure
of $x_{0}+i\sigma_{t}$ is zero there.

We emphasize that the argument for (\ref{nowakID}) is rigorous for all
sufficiently large $\lambda,$ simply because the quantity $\tau\lbrack
(\lambda-A)^{-1}]$ depends holomorphically on both the complex number
$\lambda$ and the operator $A.$ But just because the right-hand side of the
identity extends holomorphically to the upper half-plane does not by itself
mean that the identity continues to hold on the whole upper half-plane. Thus,
the argument in \cite{JNPWZ} is not entirely rigorous. Nevertheless, it
certainly gives a natural explanation of how the domain $\Omega_{t}$ arises.

The identities (\ref{nowakID}) and (\ref{nowakID2}) already indicate a close
relationship between the operators $x_{0}+i\sigma_{t}$ and $x_{0}+\sigma_{t}.$
In Section \ref{pushforward.sec}, we will find an even closer relationship:
The push-forward of the Brown measure of $x_{0}+i\sigma_{t}$ under a certain
map $Q_{t}:\overline{\Omega}_{t}\rightarrow\mathbb{R}$ is precisely the law of
$x_{0}+\sigma_{t}.$ The map $Q_{t}$ is constructed as follows: It is the
unique map of $\overline{\Omega}_{t}$ to $\mathbb{R}$ that agrees with
$H_{t}\circ J_{t}^{-1}$ on $\partial\Omega_{t}$ and maps vertical segments in
$\overline{\Omega}_{t}$ to points in $\mathbb{R}.$

\section{Outside the domain\label{outside.sec}}

In this section, we show that the Brown measure of $x_{0}+i\sigma_{t}$ is zero
in the complement of the closure of the domain $\Omega_{t}$ in Definition
\ref{omegaT.def}. We outline our strategy in Section \ref{outsideOutline.sec}
and then give a rigorous argument in Section \ref{outsideRigorous.sec}.

\subsection{Outline\label{outsideOutline.sec}}

Our goal is to compute the Laplacian with respect to $\lambda$ of the
function
\[
s_{t}(\lambda)=\lim_{\varepsilon\rightarrow0^{+}}S(t,\lambda,\varepsilon).
\]
We use the Hamilton--Jacobi method of Proposition \ref{HJ.prop}, which gives
us a formula for $S(t,\lambda(t),\varepsilon(t)).$ Since (Proposition
\ref{solveODE.prop}) $\varepsilon(t)=\varepsilon_{0}(1-p_{0}t)^{2}$, we can
attempt to make $\varepsilon(t)$ approach 0 by letting $\varepsilon_{0}$
approach zero. This strategy, however, can only succeed if the lifetime of the
path remains at least $t$ in the limit as $\varepsilon_{0}\rightarrow0.$ Thus,
we must take $\lambda_{0}$ for which $T(\lambda_{0})\geq t,$ where $T$ is as
in Definition \ref{lifetimes.def}. We therefore consider $\lambda_{0}$ in
$\overline{\Lambda}_{t}^{c},$ where $\Lambda_{t}$ is as in Definition
\ref{lambdaT.def}.

If we formally put $\varepsilon_{0}=0$, then $\varepsilon(t)=0$, and, by
Proposition \ref{smallEpsilon0.prop}, we have
\begin{equation}
\lambda(t)=J_{t}(\lambda_{0}). \label{JtLambda0}%
\end{equation}
Now, by Proposition \ref{Jprops.prop}, $J_{t}$ maps $\Lambda_{t}^{c}$
injectively onto $\Omega_{t}^{c}.$ Thus, for any $\lambda\in\overline{\Omega
}_{t}^{c},$ we may choose $\lambda_{0}=J_{t}^{-1}(\lambda)$. Then, if we
\emph{formally} apply the Hamilton--Jacobi formula (\ref{firstHJ}) with
$\varepsilon_{0}=0$, we get
\[
S(t,\lambda,0)=\int_{\mathbb{R}}\log(|J_{t}^{-1}(\lambda)-x|^{2}%
)\,d\mu(x)-\frac{t}{4}(p_{a,0}^{2}-p_{b,0}^{2}),
\]
where, with $\varepsilon_{0}=0,$ the initial momenta in (\ref{initialMomenta})
may be computed as%
\begin{align*}
p_{a,0}  &  =2\int_{\mathbb{R}}\frac{(a_{0}-x)}{(a_{0}-x)^{2}+b_{0}^{2}}%
~d\mu(x)=2\operatorname{Re}\int_{\mathbb{R}}\frac{1}{\lambda_{0}-x}~d\mu(x)\\
p_{b,0}  &  =2\int_{\mathbb{R}}\frac{b_{0}}{(a_{0}-x)^{2}+b_{0}^{2}}%
~d\mu(x)=-2\operatorname{Im}\int_{\mathbb{R}}\frac{1}{\lambda_{0}-x}~d\mu(x).
\end{align*}

Thus,
\begin{equation}
S(t,\lambda,0)=\int_{\mathbb{R}}\log(|J_{t}^{-1}(\lambda)-x|^{2}%
)\,d\mu(x)-t\operatorname{Re}[G_{x_{0}}(J_{t}^{-1}(\lambda_{0}))^{2}],
\label{eq:OutsideS}%
\end{equation}
where%
\[
G_{x_{0}}(\lambda)=\tau((\lambda-x_{0})^{-1})=\int_{\mathbb{R}}\frac
{1}{\lambda-x}~d\mu(x).
\]
The right-hand side of (\ref{eq:OutsideS}) is the composition of a harmonic
function and a holomorphic function and is therefore harmonic. We thus wish to
conclude that Brown measure of $x_{0}+i\sigma_{t}$ is zero outside
$\overline{\Omega}_{t}.$

The difficulty with the preceding argument is that the function $S(t,\lambda
,\varepsilon)$ is only known ahead of time to be defined for $\varepsilon>0.$
Thus, the PDE in Theorem \ref{PDE.thm} is only known to hold when
$\varepsilon>0$ and the Hamilton--Jacobi formula is only valid when
$\varepsilon(t)$ remains positive. We are therefore not allowed to set
$\varepsilon_{0}=0$ in the Hamilton--Jacobi formula (\ref{firstHJ}). Now, if
$\lambda$ is outside the spectrum of $x_{0}+i\sigma_{t},$ then we can see that
$S(t,\lambda,\varepsilon)$ continues to make sense for $\varepsilon=0$ and
even for $\varepsilon$ slightly negative, and the PDE and Hamilton--Jacobi
formula presumably apply. But of course we do not know that every point in
$\overline{\Omega}_{t}$ is outside the spectrum of $x_{0}+i\sigma_{t}$; if we
did, an elementary property of the Brown measure would already tell us that
the Brown measure is zero there.

If, instead, we let $\varepsilon_{0}$ \textit{approach} zero from above, we
find that
\begin{equation}
\lim_{\varepsilon_{0}\rightarrow0^{+}}S(t,\lambda(t),\varepsilon
(t))=\int_{\mathbb{R}}\log(|J_{t}^{-1}(\lambda)-x|^{2})\,d\mu(x)-t\mathrm{Re}%
\,[G_{x_{0}}(J_{t}^{-1}(\lambda))^{2}]. \label{outsideLim}%
\end{equation}
Now, as $\varepsilon_{0}\rightarrow0^{+},$ we can see that $\lambda(t)$
approaches $\lambda$ and $\varepsilon(t)$ approaches 0. But there is still a
difficulty, because the function $s_{t}(\lambda)$ is defined as the limit of
$S(t,\lambda,\varepsilon)$ as $\varepsilon$ tends to zero \textit{with
}$\lambda$\textit{ fixed}. But on the left-hand side of (\ref{outsideLim}),
$\lambda=\lambda(t)$ is not fixed, because it depends on $\varepsilon_{0}.$ To
overcome this difficulty, we will use the inverse function theorem to show
that, for each $t>0,$ the function $S$ has an extension to a neighborhood of
$(t,\lambda,0)$ that is continuous in the $\lambda$- and $\varepsilon
$-variables. Thus, the limit of $S$ along \textit{any} path approaching
$(t,\lambda,0)$ is the same as the limit with $\lambda$ fixed and
$\varepsilon$ tending to zero.

\subsection{Rigorous treatment\label{outsideRigorous.sec}}

In this section, we establish the following rigorous version of
(\ref{eq:OutsideS}), which shows that the support of the Brown measure of
$x_{0}+i\sigma_{t}$ is contained in $\overline{\Omega}_{t}$. Recall that
$s_{t}(\lambda)$ is the limit of $S(t,\lambda,\varepsilon)$ as $\varepsilon$
approaches zero from above with $\lambda$ fixed.

\begin{theorem}
\label{thm:outside}If $\lambda$ is not in $\overline{\Omega}_{t},$ we have
\begin{equation}
s_{t}(\lambda)=\int_{\mathbb{R}}\log(|J_{t}^{-1}(\lambda)-x|^{2}%
)\,d\mu(x)-t\mathrm{Re}\,[G_{x_{0}}(J_{t}^{-1}(\lambda))^{2}] \label{stResult}%
\end{equation}
and $\Delta s_{t}(\lambda)=0$.
\end{theorem}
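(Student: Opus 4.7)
The plan is to rigorize the heuristic argument of Section \ref{outsideOutline.sec}. The Hamilton--Jacobi formula (\ref{firstHJ}) is only valid along trajectories with $\varepsilon(t)>0$, so I cannot simply set $\varepsilon_0=0$; instead I will use the inverse function theorem to manufacture, for every target $(\lambda,\varepsilon')$ with $\lambda\in\overline{\Omega}_t^c$ and $\varepsilon'>0$ sufficiently small, an honest initial condition $(\lambda_0',\varepsilon_0')$ with $\varepsilon_0'>0$ whose Hamilton--Jacobi flow lands exactly at $(\lambda,\varepsilon')$ at time $t$.

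First, I set up the time-$t$ map. Using Proposition \ref{solveODE.prop}, define
\[
\Phi_t(\lambda_0,\varepsilon_0)=(\lambda(t;\lambda_0,\varepsilon_0),\,\varepsilon(t;\lambda_0,\varepsilon_0)),
\]
whose second coordinate is $\varepsilon_0(1-p_0 t)^2$ with $p_0=p_0(\lambda_0,\varepsilon_0)$ given by the last integral in (\ref{initialMomenta}). For $\lambda_0\in\overline{\Lambda}_t^c$ we have $T(\lambda_0)>t$, so $\Phi_t$ extends smoothly to a neighborhood of $(\lambda_0,0)$ in $\mathbb{C}\times\mathbb{R}$ (allowing $\varepsilon_0$ of either sign). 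At $\varepsilon_0=0$, Proposition \ref{smallEpsilon0.prop} gives $\lambda(t)=J_t(\lambda_0)$, and the real Jacobian of $\Phi_t$ is block-triangular there: the $\lambda$-block is the real derivative of the holomorphic map $J_t$, which is invertible because the injectivity of $J_t$ on $\overline{\Delta}_t$ (Proposition \ref{Jprops.prop}, point \ref{jInject.point}) forces $J_t'(\lambda_0)\neq 0$; and $\partial\varepsilon(t)/\partial\varepsilon_0=(1-p_0 t)^2>0$ since $p_0=1/T(\lambda_0)<1/t$. The inverse function theorem then produces a local diffeomorphism whose inverse maps a neighborhood of $(\lambda,0)$ onto a neighborhood of $(\lambda_0,0)$ with sign-preserving second coordinate.

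Next I define
\[
\widetilde S(\lambda,\varepsilon'):=S(0,\lambda_0',\varepsilon_0')+tH_0(\lambda_0',\varepsilon_0'),\qquad (\lambda_0',\varepsilon_0')=\Phi_t^{-1}(\lambda,\varepsilon').
\]
For $\varepsilon'>0$ (hence $\varepsilon_0'>0$), Proposition \ref{HJ.prop} identifies $\widetilde S(\lambda,\varepsilon')$ with $S(t,\lambda,\varepsilon')$, while $\widetilde S$ extends continuously down to $\varepsilon'=0$. Letting $\varepsilon'\to 0^+$ with $\lambda$ fixed gives
\[
s_t(\lambda)=S(0,J_t^{-1}(\lambda),0)+tH_0(J_t^{-1}(\lambda),0).
\]
The identity $p_{a,0}-ip_{b,0}=2G_{x_0}(\lambda_0)$ at $\varepsilon_0=0$ (which holds because $\mathrm{supp}(\mu)\subset\overline{\Lambda}_t$, so $\lambda_0=J_t^{-1}(\lambda)\notin\mathrm{supp}(\mu)$) converts $\tfrac14(p_{a,0}^2-p_{b,0}^2)$ into $\mathrm{Re}[G_{x_0}(\lambda_0)^2]$, yielding (\ref{stResult}).

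Finally, $\Delta s_t=0$ on $\overline{\Omega}_t^c$ is immediate: $J_t^{-1}$ is holomorphic there, and for a locally chosen branch of $\log$ the right-hand side of (\ref{stResult}) is the real part of the holomorphic function $2\int_{\mathbb{R}}\log(J_t^{-1}(\lambda)-x)\,d\mu(x)-tG_{x_0}(J_t^{-1}(\lambda))^2$. The main obstacle I anticipate is verifying that $\Phi_t$ is a local diffeomorphism at $\varepsilon_0=0$---the $\varepsilon_0$-transversality fails precisely on $\partial\Lambda_t$, where $p_0=1/t$---together with confirming that the inverse-function-theorem neighborhood is uniform enough to justify the one-sided limit $\varepsilon'\to 0^+$ at fixed $\lambda$.
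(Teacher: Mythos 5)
Your route is essentially the paper's: the time-$t$ flow map $\Phi_t$ is the map $V_t$ of Section \ref{outsideRigorous.sec}, your block-triangular Jacobian computation (invertibility of the $J_t$-block via injectivity of the holomorphic map, plus $(1-tp_0)^2>0$ from $T(\lambda_0)>t$) is Lemma \ref{outsideIVT.lem}, your $\widetilde S=\mathrm{HJ}\circ\Phi_t^{-1}$ with identification for $\varepsilon'>0$ and the one-sided limit at fixed $\lambda$ is Proposition \ref{outsideRegularity.prop} and the proof of Theorem \ref{thm:outside}, and the conversion $p_{a,0}-ip_{b,0}=2G_{x_0}(\lambda_0)$ together with the harmonicity argument is identical. (The sign discrepancy between your $S(0)+tH_0$ and the paper's (\ref{HJdef}) is immaterial at $\varepsilon_0=0$ and your version is the one consistent with (\ref{firstHJ}).)

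There is one step you assert rather than prove, and your stated justification for it is not sufficient: the smooth extension of $\Phi_t$ (i.e.\ of the initial momenta (\ref{initialMomenta})) to a full neighborhood of $(\lambda_0,0)$, allowing $\varepsilon_0\le 0$ and real $\lambda_0$, does not follow from $T(\lambda_0)>t$ alone. That condition only gives finiteness of $p_0$ at $\varepsilon_0=0$; the derivatives of $p_0,p_{a,0},p_{b,0}$ involve integrals of $((a_0-x)^2+b_0^2+\varepsilon_0)^{-2}$ against $\mu$, which can diverge at a real $\lambda_0$ lying in $\mathrm{supp}(\mu)$ even when $\int(a_0-x)^{-2}d\mu(x)<\infty$ (the paper's $\mu=3x^2\,dx$ example shows exactly this phenomenon, and shows the needed statement fails for $\Lambda_t^c$ in place of $\overline{\Lambda}_t^c$). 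What you actually need — and later invoke parenthetically as ``$\mathrm{supp}(\mu)\subset\overline{\Lambda}_t$'' to get $p_{a,0}-ip_{b,0}=2G_{x_0}(\lambda_0)$ and the holomorphy of $G_{x_0}\circ J_t^{-1}$ — is that every point of $\overline{\Lambda}_t^c$ has a neighborhood disjoint from $\mathrm{supp}(\mu)$. This is true but not obvious; the paper isolates it as Lemma \ref{notIntersect.lem} and proves it by a Fubini argument (if an interval $(\alpha,\beta)\subset\overline{\Lambda}_t^c\cap\mathbb{R}$, then $\int_\alpha^\beta\int(a_0-x)^{-2}d\mu(x)\,da_0\le(\beta-\alpha)/t<\infty$ forces $\mu((\alpha,\beta))=0$). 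Supply this lemma and your argument closes; everything else in your proposal matches the paper's proof.
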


The theorem will follow from the argument in Section \ref{outsideOutline.sec},
once the following regularity result is established.

\begin{proposition}
\label{outsideRegularity.prop}Fix a time $t>0$ and a point $\lambda^{\ast}%
\in\overline{\Omega}_{t}^{c}.$ Then the function $(\lambda,\varepsilon)\mapsto
S(t,\lambda,\varepsilon)$ extends to a real analytic function defined in a
neighborhood of $(\lambda^{\ast},0)$ inside $\mathbb{C}\times\mathbb{R}.$
\end{proposition}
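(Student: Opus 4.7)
The plan is to invert the time-$t$ Hamiltonian flow on the initial data $(\lambda_0,\varepsilon_0)$ and read off the extension from the first Hamilton--Jacobi formula (\ref{firstHJ}). Set $\lambda_0^* := J_t^{-1}(\lambda^*)$, which exists in $\overline{\Lambda}_t^c$ because Proposition \ref{Jprops.prop} exhibits $J_t$ as a homeomorphism of $\overline{\Lambda}_t^c$ onto $\overline{\Omega}_t^c$; in particular $T(\lambda_0^*)>t$. One also observes that $\lambda_0^*\notin\mathrm{supp}(\mu)$: this is immediate when $\operatorname{Im}\lambda_0^*\ne 0$, and for real $\lambda_0^*$ it follows from the strict inequality $T(\lambda_0^*)>t>0$ together with the fact that any point of the support of $\mu$ (for the measures occurring here) satisfies $\int d\mu(x)/(a-x)^2=\infty$.

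Define the flow map $F(\lambda_0,\varepsilon_0):=(\lambda(t),\varepsilon(t))$ by the closed-form expressions of Proposition \ref{solveODE.prop} with initial momenta (\ref{initialMomenta}). Because $\lambda_0^*\notin\mathrm{supp}(\mu)$, the denominator $(a_0-x)^2+b_0^2+\varepsilon_0$ stays uniformly bounded below on $\mathrm{supp}(\mu)$ on some open neighborhood $U$ of $(\lambda_0^*,0)$ in $\mathbb{R}^3$ that allows slightly negative $\varepsilon_0$, and continuity of $p_0$ combined with $T(\lambda_0^*)>t$ keeps $1-p_0t$ bounded away from zero on $U$. Hence $F$ is real analytic on $U$, and on the slice $\{\varepsilon_0=0\}$ it reduces to $F(\lambda_0,0)=(J_t(\lambda_0),0)$.

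Next I compute the real $3\times 3$ Jacobian of $F$ at $(\lambda_0^*,0)$ in the coordinates $(a_0,b_0,\varepsilon_0)$. Since $\varepsilon(t)=\varepsilon_0(1-p_0t)^2$ vanishes identically on $\{\varepsilon_0=0\}$, its $a_0$- and $b_0$-partials vanish at $(\lambda_0^*,0)$, so the Jacobian is block upper-triangular with top-left $2\times 2$ block equal to the real Jacobian of the holomorphic map $J_t$ at $\lambda_0^*$ (determinant $|J_t'(\lambda_0^*)|^2$) and bottom-right entry $(1-t/T(\lambda_0^*))^2>0$. The bound
\[
|G_{x_0}'(\lambda_0^*)|\le\int_{\mathbb{R}}\frac{d\mu(x)}{|\lambda_0^*-x|^2}=\frac{1}{T(\lambda_0^*)}<\frac{1}{t}
\]
forces $J_t'(\lambda_0^*)=1-tG_{x_0}'(\lambda_0^*)\ne 0$, so the full determinant is strictly positive. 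The inverse function theorem then produces a real analytic local inverse $F^{-1}$ on a neighborhood of $(\lambda^*,0)$.

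Finally, define
\[
\widetilde{S}(\lambda,\varepsilon):=S(0,\lambda_0,\varepsilon_0)+tH_0(\lambda_0,\varepsilon_0),\qquad(\lambda_0,\varepsilon_0):=F^{-1}(\lambda,\varepsilon),
\]
where $H_0$ is the Hamiltonian (\ref{theHamiltonian}) evaluated on the initial data with momenta from (\ref{initialMomenta}). Since $\lambda_0^*\notin\mathrm{supp}(\mu)$, both $S(0,\cdot,\cdot)$ and $H_0$ are real analytic near $(\lambda_0^*,0)$, so $\widetilde{S}$ is real analytic on a neighborhood of $(\lambda^*,0)$. For $\varepsilon>0$ in this neighborhood, the Hamilton--Jacobi formula (\ref{firstHJ}) gives $\widetilde{S}(\lambda,\varepsilon)=S(t,\lambda,\varepsilon)$, yielding the required extension. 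The main obstacle is the Jacobian invertibility at $\varepsilon_0=0$: both ingredients---$J_t'(\lambda_0^*)\ne 0$ and positivity of the $\varepsilon$-block---rely essentially on the \emph{strict} inequality $T(\lambda_0^*)>t$, which is exactly why the statement is restricted to the open set $\overline{\Omega}_t^c$ rather than $\Omega_t^c$.
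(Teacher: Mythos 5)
Your overall strategy is the same as the paper's: extend the time-$t$ flow map $(a_0,b_0,\varepsilon_0)\mapsto(a(t),b(t),\varepsilon(t))$ past $\varepsilon_0=0$, check that its Jacobian at $(J_t^{-1}(\lambda^*),0)$ is block upper-triangular with blocks governed by $J_t'$ and $(1-tp_0)^2$, invert by the inverse function theorem, and define the extension by the first Hamilton--Jacobi formula; that is exactly the paper's pair Lemma \ref{outsideIVT.lem} plus Proposition \ref{outsideRegularity.prop}.

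However, there is a genuine gap in the step where you place $\lambda_0^*=J_t^{-1}(\lambda^*)$ outside $\mathrm{supp}(\mu)$ when $\lambda_0^*$ is real. You invoke the ``fact'' that every point $a$ of $\mathrm{supp}(\mu)$ satisfies $\int_{\mathbb{R}}d\mu(x)/(a-x)^2=\infty$, so that finiteness of the integral (equivalently $T(\lambda_0^*)>t$) would force $\lambda_0^*\notin\mathrm{supp}(\mu)$. That statement is false for general compactly supported $\mu$, and the paper itself records a counterexample in the remark after Lemma \ref{notIntersect.lem}: for $\mu=3x^2\,dx$ on $[0,1]$ (one of the paper's worked examples), the point $a=0$ lies in $\mathrm{supp}(\mu)$ yet $\int d\mu(x)/x^2=3<\infty$, so $0\in\Lambda_t^c$ for small $t$ while $0\in\mathrm{supp}(\mu)$. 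The pointwise bound at $\lambda_0^*$ alone therefore cannot rule out membership in the support. The correct argument, which is the content of the paper's Lemma \ref{notIntersect.lem}, uses that $\lambda_0^*$ lies in the \emph{open} set $\overline{\Lambda}_t^{\,c}$: one gets a whole interval $(\alpha,\beta)\ni\lambda_0^*$ on which $\int d\mu(x)/(a_0-x)^2\le 1/t$, integrates this bound in $a_0$ over $(\alpha,\beta)$, and applies Fubini to conclude $\mu((\alpha,\beta))=0$, hence $\lambda_0^*\notin\mathrm{supp}(\mu)$ with a neighborhood avoiding the support. Since your entire construction (real analyticity of the momenta for $\varepsilon_0$ near $0$ and slightly negative, and of $S(0,\cdot,\cdot)$ and $H_0$ near $(\lambda_0^*,0)$) rests on this separation from the support, the argument does not close as written without this lemma or an equivalent substitute. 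A smaller omission: before invoking (\ref{firstHJ}) to identify $\widetilde S$ with $S(t,\cdot,\cdot)$ for $\varepsilon>0$, you should note that the preimage under your flow map necessarily has $\varepsilon_0>0$ (clear from $\varepsilon(t)=\varepsilon_0(1-p_0t)^2$) and lifetime $1/p_0>t$ (by continuity of $p_0$ near $(\lambda_0^*,0)$), so the Hamilton--Jacobi hypotheses hold on all of $[0,t]$; this is routine but is the reason the identification is legitimate.
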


We will need the following preparatory result.

\begin{lemma}
\label{notIntersect.lem}If $\lambda_{0}$ is not in $\overline{\Lambda}_{t}$,
there is a neighborhood of $\lambda_{0}$ in $\overline{\Lambda}_{t}^{c}$ that
does not intersect $\mathrm{supp}(\mu)$.
\end{lemma}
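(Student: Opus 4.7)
The plan is to reduce the statement to a single inclusion about supports. Both $\overline{\Lambda}_t$ and $\mathrm{supp}(\mu)$ are closed, so their complements intersect in an open set; if I can show $\mathrm{supp}(\mu)\subseteq\overline{\Lambda}_t$, then $\overline{\Lambda}_t^c$ itself is an open neighborhood of the hypothesized $\lambda_0$ that is automatically disjoint from $\mathrm{supp}(\mu)$, which is exactly what is asked. Conversely, failure of this inclusion would make the lemma fail at any $\lambda_0\in\mathrm{supp}(\mu)\cap\overline{\Lambda}_t^c$. So the substantive content is the inclusion $\mathrm{supp}(\mu)\subseteq\overline{\Lambda}_t$.

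Since $\mathrm{supp}(\mu)\subseteq\mathbb{R}$, I only need to handle real points. By Proposition \ref{vt.prop}, $\Lambda_t\cap\mathbb{R}$ is exactly the set of $a'\in\mathbb{R}$ with $\int_{\mathbb{R}} d\mu(x)/(a'-x)^2>1/t$, and the function $v_t$ is continuous. Thus $\overline{\Lambda}_t\cap\mathbb{R}$ contains the closure of $\{a':v_t(a')>0\}$, and it suffices to show that each $a\in\mathrm{supp}(\mu)$ is a limit of real points $a'$ with $\int_{\mathbb{R}}d\mu(x)/(a'-x)^2>1/t$; equivalently, I need to rule out the existence of an open interval around $a$ on which this integral is uniformly $\leq 1/t$.

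I would establish the latter by contradiction. Assuming the bound $\int_{\mathbb{R}} d\mu(x)/(a'-x)^2\leq 1/t$ held for every $a'\in I:=(a-r,a+r)$ with $a\in\mathrm{supp}(\mu)$, I would integrate in $a'$ over $I$ and swap the order of integration via Tonelli's theorem (the integrand $(a'-x)^{-2}$ is nonnegative and measurable), obtaining
\[
\int_{\mathbb{R}}\left(\int_I \frac{da'}{(a'-x)^2}\right)d\mu(x)\leq \frac{2r}{t}.
\]
For every $x\in I$ the inner integral equals $+\infty$ because of the non-integrable singularity at $a'=x$, so finiteness of the right-hand side forces $\mu(I)=0$. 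This contradicts $a\in\mathrm{supp}(\mu)\cap I$, completing the argument.

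I do not anticipate any real obstacle: the proof relies only on the continuity of $v_t$ (already established in Proposition \ref{vt.prop}) and an elementary Tonelli swap that exploits positivity of the kernel. The only subtlety is the book-keeping in the first paragraph that identifies the correct reduction from the two-part conclusion (neighborhood in $\overline{\Lambda}_t^c$ plus disjointness from $\mathrm{supp}(\mu)$) to the single inclusion $\mathrm{supp}(\mu)\subseteq\overline{\Lambda}_t$.
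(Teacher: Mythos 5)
Your proof is correct and is essentially the paper's argument: the decisive step --- integrating the bound $\int_{\mathbb{R}}d\mu(x)/(a'-x)^{2}\le 1/t$ over an interval $I$, swapping the order of integration by Tonelli, and using the divergence of $\int_{I}da'/(a'-x)^{2}$ for $x\in I$ to force $\mu(I)=0$, contradicting $a\in\mathrm{supp}(\mu)$ --- is exactly the computation in the paper's proof. The only difference is packaging: you state it as the global inclusion $\mathrm{supp}(\mu)\subseteq\overline{\Lambda}_{t}$, whereas the paper argues locally at the given $\lambda_{0}$ (disposing of non-real $\lambda_{0}$ first), an immaterial distinction.
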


The result of this lemma does not hold if we replace $\overline{\Lambda}%
_{t}^{c}$ by $\Lambda_{t}^{c}$. As a counter-example, if $\mu=3x^{2}\,dx$ on
$[0,1],$ then using the criterion (\ref{LambdaTIntersectR}) for $\Lambda
_{t}\cap\mathbb{R},$ we find that $0\in\Lambda_{t}^{c}$ for small enough $t,$
but $0\in\mathrm{supp}(\mu)$.

\begin{proof}
It is clear that the statement of this lemma holds unless $\lambda_{0}%
\in\mathbb{R}$ since $x_{0}$ is self-adjoint.

Consider, then, a point $\lambda_{0}\in\overline{\Lambda}_{t}^{c}%
\cap\mathbb{R}.$ Choose an interval $(\alpha,\beta)$ around $\lambda_{0}$
contained in $\overline{\Lambda}_{t}^{c}\cap\mathbb{R}.$ We claim that
$\mu((\alpha,\beta))$ must be zero. To see, note that since the points in
$(\alpha,\beta)$ are outside $\Lambda_{t},$ we have (Definition
\ref{lambdaT.def})%
\[
\int_{\mathbb{R}}\frac{1}{(a_{0}-x)^{2}}\,d\mu(x)\leq\frac{1}{t}%
\]
for all $a_{0}\in(\alpha,\beta)$. If we integrate the above integral with
respect to the Lebesgue measure in $a_{0}$, we have
\[
\int_{\alpha}^{\beta}\int_{\mathbb{R}}\frac{1}{(a_{0}-x)^{2}}\,d\mu
(x)\,da_{0}<\infty.
\]
We may then reverse the order of integration and restrict the integral with
respect to $\mu$ to $(\alpha,\beta)$ to get
\begin{equation}
\int_{\alpha}^{\beta}\int_{\alpha}^{\beta}\frac{1}{(a_{0}-x)^{2}}%
\,da_{0}\,d\mu(x)<\infty. \label{eq:vt0contra}%
\end{equation}
But
\[
\int_{\alpha}^{\beta}\frac{1}{(a_{0}-x)^{2}}\,da_{0}=\infty
\]
for all $x\in(\alpha,\beta)$. Thus, the only way (\ref{eq:vt0contra}) can hold
is if $\mu((\alpha,\beta)=0.$
\end{proof}

We now work toward the proof of Proposition \ref{outsideRegularity.prop}. In
light of the formulas for the solution path in Proposition \ref{solveODE.prop}%
, we consider the map $V_{t}$ given by $V_{t}(a_{0},b_{0},\varepsilon
_{0})=(a_{t},b_{t},\varepsilon_{t}),$ where
\begin{align*}
a_{t}(a_{0},b_{0},\varepsilon_{0})  &  =a_{0}-\frac{t}{2}p_{a,0}\\
b_{t}(a_{0},b_{0},\varepsilon_{0})  &  =b_{0}+\frac{t}{2}p_{b,0}\\
\varepsilon_{t}(a_{0},b_{0},\varepsilon_{0})  &  =\varepsilon_{0}%
(1-p_{0}t)^{2}.
\end{align*}
This map is initially defined for $\varepsilon_{0}>0,$ which guarantees that
the integrals (\ref{initialMomenta}) defining $p_{a,0}$ and $p_{b,0}$ are
convergent, even if $b_{0}=0.$ But if $\lambda_{0}$ is in $\overline{\Lambda
}_{t}^{c},$ Lemma \ref{notIntersect.lem} guarantees that $\lambda_{0}$ is
outside the closed support of $\mu,$ so that the integrals are convergent when
$\varepsilon_{0}=0$ and even when $\varepsilon_{0}$ is slightly negative.
Thus, for any $\lambda_{0}\in\overline{\Lambda}_{t}^{c}$, we can extend
$V_{t}$ to a neighborhood of $(\lambda_{0},0)$ using the same formula. We note
that when $\varepsilon_{0}=0,$ we have
\begin{equation}
a_{t}(a_{0},b_{0},0)+ib_{t}(a_{0},b_{0},0)=J_{t}(a_{0}+ib_{0}),
\label{atPlusIbt}%
\end{equation}
as in (\ref{JtLambda0}).

\begin{lemma}
\label{outsideIVT.lem}If $\lambda_{0}$ is not in $\overline{\Lambda}_{t},$ the
Jacobian matrix of $V_{t}$ at $(\lambda_{0},0)$ is invertible.
\end{lemma}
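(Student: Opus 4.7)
The plan is to write the Jacobian of $V_{t}$ at $(\lambda_{0},0)$ explicitly and exploit its block-triangular structure together with the hypothesis $T(\lambda_{0})>t$. First, since $\lambda_{0}\notin\overline{\Lambda}_{t}$, Lemma \ref{notIntersect.lem} gives a neighborhood of $\lambda_{0}$ disjoint from $\mathrm{supp}(\mu)$, so all integrals in (\ref{initialMomenta}) defining $p_{a,0}$, $p_{b,0}$, and $p_{0}$ depend real-analytically on $(a_{0},b_{0},\varepsilon_{0})$ in a neighborhood of $(\lambda_{0},0)$, and at $\varepsilon_{0}=0$ one has $a_{t}+ib_{t}=J_{t}(\lambda_{0})$ by (\ref{atPlusIbt}).

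Next, I would compute the Jacobian. The third component $\varepsilon_{t}=\varepsilon_{0}(1-p_{0}t)^{2}$ carries an overall factor $\varepsilon_{0}$, so $\partial\varepsilon_{t}/\partial a_{0}$ and $\partial\varepsilon_{t}/\partial b_{0}$ both vanish at $\varepsilon_{0}=0$, while $\partial\varepsilon_{t}/\partial\varepsilon_{0}=(1-p_{0}t)^{2}$ there. Thus the Jacobian matrix is block triangular at $(\lambda_{0},0)$:
\[
\det DV_{t}(\lambda_{0},0)=(1-p_{0}t)^{2}\cdot\det\!\begin{pmatrix}\partial a_{t}/\partial a_{0} & \partial a_{t}/\partial b_{0}\\ \partial b_{t}/\partial a_{0} & \partial b_{t}/\partial b_{0}\end{pmatrix}\Bigg|_{\varepsilon_{0}=0}.
\]
For the scalar factor, the assumption $\lambda_{0}\notin\overline{\Lambda}_{t}$ means $T(\lambda_{0})>t$, which by Definition \ref{lifetimes.def} is exactly $p_{0}=\int d\mu(x)/((a_{0}-x)^{2}+b_{0}^{2})<1/t$; hence $1-p_{0}t>0$.

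For the $2\times2$ block, at $\varepsilon_{0}=0$ the map $(a_{0},b_{0})\mapsto(a_{t},b_{t})$ is, by (\ref{atPlusIbt}), the real and imaginary parts of the holomorphic function $J_{t}(z)=z-tG_{x_{0}}(z)$ on the neighborhood of $\lambda_{0}$ provided by Lemma \ref{notIntersect.lem}. The determinant of the $2\times2$ block therefore equals $|J_{t}'(\lambda_{0})|^{2}=|1-tG_{x_{0}}'(\lambda_{0})|^{2}$. To see this is nonzero, I would apply the Cauchy--Schwarz-type bound
\[
|G_{x_{0}}'(\lambda_{0})|=\left|\int_{\mathbb{R}}\frac{d\mu(x)}{(\lambda_{0}-x)^{2}}\right|\leq\int_{\mathbb{R}}\frac{d\mu(x)}{|\lambda_{0}-x|^{2}}=\int_{\mathbb{R}}\frac{d\mu(x)}{(a_{0}-x)^{2}+b_{0}^{2}}<\frac{1}{t},
\]
where the last strict inequality is again the hypothesis $\lambda_{0}\notin\overline{\Lambda}_{t}$. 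Therefore $|tG_{x_{0}}'(\lambda_{0})|<1$, so $J_{t}'(\lambda_{0})\neq 0$ and the $2\times 2$ block is invertible.

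Combining the two factors, $\det DV_{t}(\lambda_{0},0)=(1-p_{0}t)^{2}|1-tG_{x_{0}}'(\lambda_{0})|^{2}>0$. The main (minor) subtlety is ensuring the strictness of the modulus bound on $G_{x_{0}}'(\lambda_{0})$; here the hypothesis $T(\lambda_{0})>t$ is already a strict inequality, so the crude modulus-inside-the-integral estimate suffices and no separate appeal to the non-$\delta$-measure assumption is needed.
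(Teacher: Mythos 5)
Your proposal is correct, and its skeleton coincides with the paper's: the same block upper-triangular form of $DV_{t}$ at $(\lambda_{0},0)$, the same vanishing of $\partial\varepsilon_{t}/\partial a_{0}$ and $\partial\varepsilon_{t}/\partial b_{0}$, the same factor $(1-tp_{0})^{2}>0$ coming from $T(\lambda_{0})>t$, and the same identification, via (\ref{atPlusIbt}) and Lemma \ref{notIntersect.lem}, of the upper-left $2\times2$ block with the real Jacobian of the holomorphic map $J_{t}$. The one genuine difference is how that $2\times2$ block is shown to be invertible: the paper appeals to Point \ref{jInject.point} of Proposition \ref{Jprops.prop} (injectivity of $J_{t}$ on $\overline{\Lambda}_{t}^{c}$), so that the complex derivative of an injective holomorphic map cannot vanish at $\lambda_{0}$, whereas you estimate $|G_{x_{0}}'(\lambda_{0})|\leq\int_{\mathbb{R}}d\mu(x)/|\lambda_{0}-x|^{2}<1/t$ directly, giving $J_{t}'(\lambda_{0})=1-tG_{x_{0}}'(\lambda_{0})\neq0$ and determinant $|J_{t}'(\lambda_{0})|^{2}>0$. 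Your route is more elementary and self-contained---it bypasses the injectivity result (whose proof uses Cauchy--Schwarz together with Assumption \ref{notDirac.assumption}) and yields a quantitative lower bound $|J_{t}'(\lambda_{0})|\geq1-t|G_{x_{0}}'(\lambda_{0})|>0$---while the paper's route simply reuses machinery it has already established for other purposes. One caveat you share with the paper: both arguments take for granted that $\lambda_{0}\notin\overline{\Lambda}_{t}$ forces the \emph{strict} inequality $\int_{\mathbb{R}}d\mu(x)/((a_{0}-x)^{2}+b_{0}^{2})<1/t$, although membership in the complement of $\Lambda_{t}$ only gives $\leq$ directly; equality is excluded because any point where the integral equals $1/t$ lies in $\overline{\Lambda}_{t}$ (for $b_{0}\neq0$ since then $|b_{0}|=v_{t}(a_{0})$ and the open vertical segment below it lies in $\Lambda_{t}$; for $b_{0}=0$ because $a\mapsto\int_{\mathbb{R}}d\mu(x)/(a-x)^{2}$ is strictly convex off $\mathrm{supp}(\mu)$, so nearby real points lie in $\Lambda_{t}$). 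Since the paper asserts this strictness without comment, it is not a gap specific to your proof, but your closing remark that the strictness is simply the hypothesis is slightly too quick.
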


\begin{proof}
If we vary $a_{0}$ or $b_{0}$ with $\varepsilon_{0}$ held equal to $0,$ then
$\varepsilon$ remains equal to zero, so that
\[
\frac{\partial\varepsilon_{t}}{\partial a_{0}}(\lambda_{0},0)=\frac
{\partial\varepsilon_{t}}{\partial b_{0}}(\lambda_{0},0)=0.
\]
Meanwhile, from the formula for $\varepsilon_{t},$ we obtain
\[
\frac{\partial\varepsilon_{t}}{\partial\varepsilon_{0}}(\lambda_{0}%
,0)=(1-tp_{0})^{2}.
\]
Thus, using (\ref{atPlusIbt}), we find that the Jacobian matrix of $V$ at
$(a_{0},b_{0},0)$ has the form
\[
\left(
\begin{array}
[c]{cc}%
K & \ast\\
0 & (1-tp_{0})^{2}%
\end{array}
\right)  ,
\]
where $K$ is the $2\times2$ Jacobian matrix of the map $J_{t}$.

Since $\lambda_{0}\in\overline{\Lambda}_{t}^{c},$ we have $T(\lambda
_{0})=1/p_{0}>1/t,$ so that $1-tp_{0}>0.$ Furthermore, since $J_{t}$ is
injective on $\overline{\Lambda}_{t}^{c},$ its complex derivative must be
nonzero at $\lambda_{0},$ so that $K$ is invertible. We can then see that the
Jacobian matrix of $V_{t}$ at $(t,\lambda_{0},0)$ has nonzero determinant.
\end{proof}

We are now ready for the proof of our regularity result.

\begin{proof}
[Proof of Propositon \ref{outsideRegularity.prop}]Define a function
$\mathrm{HJ}$ by the right-hand side of the first Hamilton--Jacobi formula
(\ref{firstHJ}), namely,%
\begin{equation}
\mathrm{HJ}(a_{0},b_{0},\varepsilon_{0},t)=S(0,\lambda_{0},\varepsilon
_{0})-t\left[  \frac{1}{4}(p_{a,0}^{2}-p_{b,0}^{2})-\varepsilon_{0}p_{0}%
^{2}\right]  . \label{HJdef}%
\end{equation}
Now take $\lambda^{\ast}\in\overline{\Omega}_{t}^{c}$ and let $\lambda
_{0}^{\ast}=J_{t}^{-1}(\lambda^{\ast}),$ so that $\lambda_{0}^{\ast}%
\in\overline{\Lambda}_{t}^{c}.$ By Lemma \ref{outsideIVT.lem} and the inverse
function theorem, $V_{t}$ has an analytic inverse in a neighborhood $U$ of
$(\lambda^{\ast},0).$ By shrinking $U$ if necessary, we can assume that the
$\lambda_{0}$-component of $V^{-1}(\lambda,\varepsilon)$ lies in
$\overline{\Lambda}_{t}^{c}$ for all $(\lambda,\varepsilon)$ in $U.$ We now
claim that for each fixed $t>0,$ the map%
\begin{equation}
(\lambda,\varepsilon)\mapsto\mathrm{HJ}\circ V_{t}^{-1}(\lambda,\varepsilon)
\label{Sextension}%
\end{equation}
gives the desired analytic extension of $S(t,\cdot,\cdot)$ to a neighborhood
of $(\lambda^{\ast},0).$

We first note that $\mathrm{HJ}\circ V_{t}^{-1}$ is smooth, where we use Lemma
\ref{notIntersect.lem} to guarantee that the momenta in the definition of
$\mathrm{HJ}$ are well defined. We then argue that for all $(\lambda
,\varepsilon)$ in $U$ with $\varepsilon>0,$ the value of $\mathrm{HJ}\circ
V_{t}^{-1}(\lambda,\varepsilon)$ agrees with $S(t,\lambda,\varepsilon).$ To
see this, note first that if $(\lambda,\varepsilon)\in U$ has $\varepsilon>0,$
then the $\varepsilon_{0}$-component of $V_{t}^{-1}(\lambda,\varepsilon)$ must
be positive, as is clear from the formula for $\varepsilon_{t}(a_{0}%
,b_{0},\varepsilon_{0}).$ Since, also, the $\lambda_{0}$-component of
$V_{t}^{-1}(\lambda,\varepsilon)$ is in $\overline{\Lambda}_{t}^{c},$ the
small-$\varepsilon_{0}$ lifetime of the path is at least $t,$ so that when
$\varepsilon_{0}>0,$ the lifetime is greater than $t.$ Thus, for
$(\lambda,\varepsilon)$ in $U$ with $\varepsilon>0,$ the first
Hamilton--Jacobi formula (\ref{firstHJ}) tells us that, indeed, $S(t,\lambda
,\varepsilon)=\mathrm{HJ}(V_{t}^{-1}(\lambda,\varepsilon)).$
\end{proof}

We now come to the proof of our main result.

\begin{proof}
[Proof of Theorem \ref{thm:outside}]Once we know that (\ref{Sextension}) gives
an analytic extension of $S(t,\cdot,\cdot),$ we conclude that the function
$s_{t}$ defined as%
\[
s_{t}(\lambda)=\lim_{\varepsilon\rightarrow0}S(t,\lambda,\varepsilon)
\]
can be computed as%
\begin{equation}
s_{t}(\lambda)=\mathrm{HJ}\circ V_{t}^{-1}(\lambda,0). \label{stFormula}%
\end{equation}
The point of this observation is that because $\mathrm{HJ}\circ V_{t}^{-1}$ is
analytic (in particular, continuous), we can compute $s_{t}(\lambda)$ by
taking the limit of $\mathrm{HJ}\circ V_{t}^{-1}(\delta,\varepsilon)$ along
\textit{any} path ending at $(\lambda,0),$ rather than having to fix $\lambda$
and let $\varepsilon$ tend to zero.

Fix a point $\lambda$ in $\overline{\Omega}_{t}^{c}$ and let $\lambda
_{0}=J_{t}^{-1}(\lambda)$, so that $T(\lambda_{0})\geq t.$ Then for any
$\varepsilon_{0}>0,$ the lifetime of the path with initial conditions
$(\lambda_{0},\varepsilon_{0})$ will be greater than $t$ and the first
Hamilton--Jacobi formula (\ref{firstHJ}) tells us that
\[
S(t,\lambda(t),\varepsilon(t))=S(0,\lambda_{0},\varepsilon_{0})-t\left[
\frac{1}{4}(p_{a,0}^{2}-p_{b,0}^{2})-\varepsilon_{0}p_{0}^{2}\right]  .
\]
As $\varepsilon_{0}\rightarrow0$, we find that $\lambda(t)\rightarrow
J_{t}(\lambda_{0})=\lambda$ and $\varepsilon(t)\rightarrow0.$ Thus, by
(\ref{stFormula}) and the continuity of $\mathrm{HJ}\circ V_{t}^{-1},$ we have%
\[
s_{t}(\lambda)=S(0,\lambda_{0},0)-t\lim_{\varepsilon_{0}\rightarrow0}\left[
\frac{1}{4}(p_{a,0}^{2}-p_{b,0}^{2})-\varepsilon_{0}p_{0}^{2}\right]  ,
\]
which gives the claimed expression (\ref{stResult}).

Now, if $\lambda$ is outside of $\overline{\Omega}_{t},$ then $J_{t}%
^{-1}(\lambda)$ is outside of $\bar{\Lambda}_{t},$ which means (Lemma
\ref{notIntersect.lem}) that $J_{t}^{-1}(\lambda)$ is outside the support of
the measure $\mu.$ It is then easy to see that $s_{t}$ is a composition of a
harmonic function and a holomorphic function, which is harmonic.
\end{proof}

\section{Inside the domain\label{inside.sec}}

\subsection{Outline}

In Section \ref{outside.sec}, we computed the Brown measure in the complement
of $\overline{\Omega}_{t}$ and found that it is zero there. Our strategy was
to apply the Hamilton--Jacobi formulas with $\lambda_{0}$ in the complement of
$\overline{\Lambda}_{t}$ and $\varepsilon_{0}$ chosen to be very small, so
that $\lambda(t)$ is in the complement of $\overline{\Omega}_{t}$ and
$\varepsilon(t)$ is also very small. If, on the other hand, we take
$\lambda_{0}$ inside $\Lambda_{t}$, then (by definition) $T(\lambda_{0})<t,$
meaning that the small-$\varepsilon_{0}$ lifetime of the path is less than
$t.$ Thus, for $\lambda_{0}\in\Lambda_{t}$ and $\varepsilon_{0}$ small, the
Hamilton--Jacobi formulas are not applicable at time $t.$

In this section, then, we will use a different strategy. We recall from
Proposition \ref{solveODE.prop} that $\varepsilon(t)=\varepsilon_{0}%
(1-p_{0}t)^{2}.$ Thus, an alternative way to make $\varepsilon(t)$ small is to
take $\varepsilon_{0}>0$ and arrange for $p_{0}$ to be close to $1/t.$ Thus,
for each point $\lambda$ in $\Omega_{t},$ we will try to find $\lambda_{0}%
\in\Lambda_{t}$ and $\varepsilon_{0}>0$ so that $p_{0}=1/t$ and $\lambda
(t)=\lambda.$ (If $p_{0}=1/t$ then the solution to the system of ODEs blows up
at time $t,$ so that technically we are not allowed to apply the
Hamilton--Jacobi formulas at time $t.$ But we will gloss over this point for
now and return to it in Section \ref{insideTechnical.sec}.)

Once we have understood how to choose $\lambda_{0}$ and $\varepsilon_{0}$ as
functions of $\lambda\in\Omega_{t},$ we will then apply the Hamilton--Jacobi
method to compute the Brown measure inside $\Omega_{t}.$ Specifically, we will
use the second Hamilton--Jacobi formula (\ref{secondHJ}) to compute the first
derivatives of $S(t,\lambda,0)$ with respect to $a$ and $b.$ We then compute
the second derivatives to get the density of the Brown measure.

\subsubsection{Mapping onto $\Omega_{t}$}

We first describe how to choose $\lambda_{0}$ and $\varepsilon_{0}>0$ as
functions of $\lambda\in\Omega_{t}$ so that $\lambda(t)=\lambda$ and
$\varepsilon(t)=0.$ If $a_{0}+ib_{0}\in\Lambda_{t},$ then $\left\vert
b_{0}\right\vert <v_{t}(a_{0}).$ Then from the defining property (\ref{vtDef})
of the function $v_{t},$ we see that if we take%
\begin{equation}
\varepsilon_{0}=\varepsilon_{0}^{t}(a_{0}):=v_{t}(a_{0})^{2}-b_{0}^{2},
\label{epsilon0fromA0}%
\end{equation}
then $\varepsilon_{0}$ is positive and plugging this value of $\varepsilon
_{0}$ into the formula (\ref{initialMomenta}) for $p_{0}$ gives%
\[
p_{0}=\int_{\mathbb{R}}\frac{1}{(a_{0}-x)^{2}+v_{t}(a_{0})^{2}}~d\mu
(x)=\frac{1}{t},
\]
as desired.

It remains to see how to choose $\lambda_{0}$ so that (with $\varepsilon_{0}$
given by (\ref{epsilon0fromA0})) we will have $\lambda(t)=\lambda.$ Since
$p_{0}=1/t,$ Proposition \ref{solveODEspecial.prop} applies:
\begin{align}
a(t)  &  =t\int_{\mathbb{R}}\frac{1}{(a_{0}-x)^{2}+v_{t}(a_{0})^{2}}%
~d\mu(x)\label{p1condition}\\
b(t)  &  =2b_{0}. \label{bEquals}%
\end{align}
If we want $\lambda(t)$ to equal $\lambda=a+ib,$ then (\ref{bEquals})
immediately tells us that we should choose $b_{0}=b/2.$ We will show in
Section \ref{sect:surjectivity} that (\ref{p1condition}) can be solved for
$a_{0}$ as a function of $a$ and $t$; we use the notation $a_{0}^{t}(a)$ for
the solution.

\begin{summary}
\label{surject.summary}For all $\lambda=a+ib\in\Omega_{t},$ the following
procedure shows how to choose $\lambda_{0}=a_{0}+ib_{0}\in\Lambda_{t}$ and
$\varepsilon_{0}>0$ so that, with these initial conditions, we will have
$\lambda(t)=\lambda$ and $\varepsilon(t)=0.$ First, we use the condition%
\[
\int_{\mathbb{R}}\frac{1}{(a_{0}-x)^{2}+v_{t}(a_{0})^{2}}~d\mu(x)=\frac{1}{t}%
\]
to determine $v_{t}$ as a function of $a_{0}.$ Second, use the condition
\[
\int_{\mathbb{R}}\frac{x}{(a_{0}-x)^{2}+v_{t}(a_{0})^{2}}~d\mu(x)=\frac{a}{t}%
\]
to determine $a_{0}$ as a function $a_{0}^{t}$ of $a.$ Then we take
\begin{align*}
b_{0}  &  =b/2\\
\varepsilon_{0}  &  =v_{t}(a_{0}^{t}(a))^{2}-b_{0}^{2}.
\end{align*}

\end{summary}

\subsubsection{Computing the Brown measure}

Using the choices for $\lambda_{0}$ and $\varepsilon_{0}$ in Summary
\ref{surject.summary}, we then apply the second Hamilton--Jacobi formula
(\ref{secondHJ}). Since $\lambda(t)=\lambda$ and $\varepsilon(t)=0$ and
$p_{b}$ is a constant of motion,%
\[
\frac{\partial S}{\partial b}(t,\lambda,0)=p_{b}(t)=p_{b,0}.
\]
But since, by (\ref{pb0}), $p_{b,0}=2b_{0}p_{0},$ we obtain%
\[
\frac{\partial S}{\partial b}(t,\lambda,0)=2b_{0}p_{0}=\frac{b}{t},
\]
since we are assuming that $p_{0}=1/t.$

Similarly,%
\begin{align*}
\frac{\partial S}{\partial a}(t,\lambda,0)  &  =p_{a}(t)\\
&  =p_{a,0}\\
&  =2a_{0}p_{0}-2p_{1}.\\
&  =\frac{2a_{0}^{t}(a)}{t}-\frac{2a}{t},
\end{align*}
where we have used (\ref{p1condition}) and the formula (\ref{pa0}) for
$p_{a,0}.$

\begin{conclusion}
\label{sDerivatives.conclusion}The preceding argument suggests that for
$\lambda=a+ib\in\Omega_{t},$ we should have%
\begin{align*}
\frac{\partial s_{t}}{\partial a}  &  =\frac{2}{t}(a_{0}^{t}(a)-a)\\
\frac{\partial s_{t}}{\partial b}  &  =\frac{b}{t}.
\end{align*}
If this is correct, then the density of the Brown measure in $\Omega_{t}$ is
readily computed as%
\[
\frac{1}{4\pi}\left(  \frac{\partial^{2}S}{\partial a^{2}}+\frac{\partial
^{2}S}{\partial b^{2}}\right)  (t,\lambda,0)=\frac{1}{2\pi t}\left(
\frac{da_{0}^{t}(a)}{da}-\frac{1}{2}\right)  ,
\]
as claimed in Theorem \ref{intro.thm}. In particular, the density of the Brown
measure in $\Lambda_{t}$ would be independent of $b=\operatorname{Im}\lambda.$
\end{conclusion}

\subsubsection{Technical issues\label{insideTechnical.sec}}

The preceding argument is not rigorous, since the Hamilton--Jacobi formulas
are only known to hold as long as $\varepsilon(s)$ remains positive for all
$0\leq s\leq t.$ That is to say, if $\varepsilon(t)=0$ then we are not allowed
to use the formulas at time $t.$ We can try to work around this point by
letting $\varepsilon_{0}$ \textit{approach} the value $\varepsilon_{0}%
^{t}(\lambda_{0}):=v_{t}(a_{0})^{2}-b_{0}^{2}$ in (\ref{epsilon0fromA0}) from
above. Then we have a situation similar to the one in (\ref{outsideLim}),
namely%
\begin{align}
\lim_{\varepsilon_{0}\rightarrow\varepsilon_{0}^{t}(\lambda_{0})^{+}}%
\frac{\partial S}{\partial a}(t,\lambda(t),\varepsilon(t))  &  =\frac{2}%
{t}(a_{0}^{t}(a)-a)\label{InsideLim1}\\
\lim_{\varepsilon_{0}\rightarrow\varepsilon_{0}^{t}(\lambda_{0})^{+}}%
\frac{\partial S}{\partial b}(t,\lambda(t),\varepsilon(t))  &  =\frac{b}{t},
\label{InsideLim2}%
\end{align}
where%
\[
\lim_{\varepsilon_{0}\rightarrow\varepsilon_{0}^{t}(\lambda_{0})}%
\lambda(t)=\lambda;\quad\lim_{\varepsilon_{0}\rightarrow\varepsilon_{0}%
^{t}(\lambda_{0})}\varepsilon(t)=0.
\]

But the Brown measure is computed by first evaluating the limit%
\[
s_{t}(\lambda):=\lim_{\varepsilon\rightarrow0^{+}}S(t,\lambda,\varepsilon),
\]
where the limit is taken as $\varepsilon\rightarrow0$ \textit{with }$\lambda
$\textit{\ fixed}, and then taking the distributional Laplacian with respect
to $\lambda.$ Since $\lambda(t)$ is not fixed in (\ref{InsideLim1}) and
(\ref{InsideLim2}), it is not clear that these limits are actually computing
$\partial s_{t}/\partial a$ and $\partial s_{t}/\partial b.$ The main
technical challenge of this section is, therefore, to establish enough
regularity of $S$ near $(t,\lambda,\varepsilon)$ to verify that $\partial
s_{t}/\partial a$ and $\partial s_{t}/\partial b$ are actually given by the
right-hand sides of (\ref{InsideLim1}) and (\ref{InsideLim2}).

\subsection{Surjectivity\label{sect:surjectivity}}

In this section, we show that the procedure in Summary \ref{surject.summary}
actually gives a continuous map of $\Lambda_{t}$ onto $\Omega_{t}.$ Given any
$\lambda_{0}\in\Lambda_{t}$, choose $\varepsilon_{0}=\varepsilon_{0}%
^{t}(\lambda_{0})$ as in (\ref{epsilon0fromA0}), so that
\[
\lim_{s\rightarrow t}\varepsilon(s)=0.
\]
Then define
\[
U_{t}(\lambda_{0})=\lim_{s\rightarrow t}\lambda(s).
\]
By Proposition \ref{solveODEspecial.prop}, we have
\[
U_{t}(a_{0}+ib_{0})=a_{t}(a_{0})+2ib_{0}%
\]
where
\begin{equation}
a_{t}(a_{0})=t\int_{\mathbb{R}}\frac{1}{(a_{0}-x)^{2}+v_{t}(a_{0})^{2}}%
\,d\mu(x). \label{atFormulas}%
\end{equation}
Since we assume $\lambda_{0}\in\Lambda_{t},$ we have $v_{t}(a_{0})>0$ and we
therefore have an alternative formula:%
\begin{equation}
a_{t}(a_{0})=\operatorname{Re}[J_{t}(a_{0}+iv_{t}(a_{0}))]. \label{atAlt}%
\end{equation}
It is a straightforward computation to check that the right-hand sides of
(\ref{atFormulas}) and (\ref{atAlt}) agree, using that the identity
(\ref{vtDef}) holds when $v_{t}(a_{0})>0.$

The main result in this section is stated in the following theorem. We remind
the reader of the definition (\ref{jtDef}) of the map $J_{t}.$

\begin{theorem}
\label{thm:lambda0tolambda}The following results hold.

\begin{enumerate}
\item The map $U_{t}$ extends continuously to $\overline{\Lambda}_{t}.$ This
extension is the unique continuous map of $\overline{\Lambda}_{t}$ into
$\overline{\Omega}_{t}$ that (a) agrees with $J_{t}$ on $\partial\Lambda_{t}$
and (b) maps each vertical segment in $\overline{\Lambda}_{t}$ linearly to a
vertical segment in $\overline{\Omega}_{t}.$

\item The map $U_{t}$ is a homeomorphism from $\Lambda_{t}$ onto $\Omega_{t}$.
\end{enumerate}
\end{theorem}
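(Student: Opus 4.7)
The plan is to exploit the explicit formula $U_t(a_0+ib_0)=a_t(a_0)+2ib_0$ on $\Lambda_t$ and reduce all assertions of the theorem to properties of $a_t$ and $v_t$ already established in Proposition~\ref{Jprops.prop}. First, since Point~\ref{atHomeo.point} tells us $a_t$ is continuous on all of $\mathbb{R}$, the same formula immediately extends to a continuous map on $\overline{\Lambda}_t$. To verify that this extension agrees with $J_t$ on the upper boundary $\{a_0+iv_t(a_0)\}$, I would invoke the calculation in the proof of Point~\ref{graphs.point} giving $\operatorname{Im}[J_t(a_0+iv_t(a_0))]=2v_t(a_0)$, combined with the definition $a_t(a_0)=\operatorname{Re}[J_t(a_0+iv_t(a_0))]$; together they force $J_t(a_0+iv_t(a_0))=a_t(a_0)+2iv_t(a_0)$, which matches $U_t$ at $b_0=v_t(a_0)$. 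The lower boundary follows from complex conjugation. Uniqueness of such an extension is immediate: any continuous map into $\overline{\Omega}_t$ that agrees with $J_t$ on $\partial\Lambda_t$ and is linear in $b_0$ on each vertical segment must be the linear interpolant between $J_t(a_0-iv_t(a_0))$ and $J_t(a_0+iv_t(a_0))$, which is precisely $a_t(a_0)+2ib_0$.

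For the homeomorphism claim, my plan is to view $U_t$ as the product map $(a_0,b_0)\mapsto(a_t(a_0),2b_0)$. By Point~\ref{atHomeo.point} of Proposition~\ref{Jprops.prop}, $a_t$ is a homeomorphism of $\mathbb{R}$ onto $\mathbb{R}$, so this product is a homeomorphism of $\mathbb{R}^2$ onto itself; in particular $U_t$ is injective and bicontinuous between its domain and image. Identifying the image as $\Omega_t$ is where the earlier work pays off: the identity $b_t(a_t(a_0))=2v_t(a_0)$ from Point~\ref{graphs.point} converts the defining inequality $|b_0|<v_t(a_0)$ for $\Lambda_t$ into $|2b_0|<b_t(a_t(a_0))$, which by the description of $\Omega_t$ in Point~\ref{OmegaGraph.point} is exactly the condition for $a_t(a_0)+2ib_0$ to lie in $\Omega_t$. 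Surjectivity onto $\Omega_t$ then uses that $a_t$ maps $\mathbb{R}$ onto $\mathbb{R}$: given $a+ib\in\Omega_t$, take $a_0=a_t^{-1}(a)$ and $b_0=b/2$.

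I do not anticipate a substantive analytic obstacle here, since the heavy lifting has already been done in Proposition~\ref{Jprops.prop}. The one ingredient worth emphasizing is the strict monotonicity of $a_t$ used in the product-map argument, which is not automatic and relies essentially on the standing assumption that $\mu$ is not a $\delta$-measure; this input enters earlier through the strict Cauchy--Schwarz inequality in Point~\ref{datDt.point}. Granted that, the present theorem is essentially a bookkeeping exercise that assembles the explicit formula for $U_t$, the boundary identity $J_t(a_0+iv_t(a_0))=a_t(a_0)+2iv_t(a_0)$, and the surjectivity and monotonicity of $a_t$ into the stated conclusion.
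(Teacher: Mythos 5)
Your proposal is correct and follows essentially the same route as the paper's proof: both rest on Proposition \ref{Jprops.prop} (continuity and strict monotonicity of $a_{t}$, the boundary identity $\operatorname{Im}J_{t}(a_{0}\pm iv_{t}(a_{0}))=\pm2v_{t}(a_{0})$, and the identification of $\partial\Omega_{t}$ as the graph of $b_{t}$), with uniqueness obtained from linearity on vertical segments. Your product-map formulation $(a_{0},b_{0})\mapsto(a_{t}(a_{0}),2b_{0})$ merely spells out the step the paper dismisses as ``it follows easily.''
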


Most of what we need to prove the theorem is already in Proposition
\ref{Jprops.prop}.

\begin{proof}
Proposition \ref{Jprops.prop} showed that the right-hand side of (\ref{atAlt})
is continuous for all $a_{0}\in\mathbb{R}.$ Using this formula for $a_{t},$ we
see that $U_{t}$ actually extends continuously to the whole complex plane. It
is then a simple computation to check that
\[
\operatorname{Im}J_{t}(a_{0}\pm iv_{t}(a_{0}))=\pm2v_{t}(a_{0}).
\]
This formula, together with (\ref{atAlt}), shows that $U_{t}$ agrees with
$J_{t}$ for all points in $\partial\Lambda_{t}$ having nonzero imaginary
parts. Then points in $\partial\Lambda_{t}$ on the real axis are limits of
points in $\partial\Lambda_{t}$ with nonzero imaginary parts. Thus, $U_{t}$
indeed agrees with $J_{t}$ on $\partial\Lambda_{t}.$ Also $U_{t}$ is linear on
each vertical segment. Since $\Lambda_{t}$ is bounded by the graphs of $v_{t}$
and $-v_{t},$ it is easy to see that $U_{t}$ is the \textit{unique} map with
these two properties.

By Proposition \ref{Jprops.prop}, $\Omega_{t}$ is bounded by the graphs
$b_{t}$ and $-b_{t},$ where the graph of $b_{t}$ is the image of the graph of
$v_{t}$ under $J_{t}.$ From this result, it follows easily that $U_{t}$ is a homeomorphism.
\end{proof}

We conclude this section by giving bounds on the real parts of points in
$\Omega_{t}$, in terms of the law $\mu$ of $x_{0}.$

\begin{proposition}
\label{prop:OmegatBound} Let
\[
M=\sup~\mathrm{supp}(\mu),\quad m=\inf~\mathrm{supp}(\mu).
\]
Then
\[
m<\inf(\Omega_{t}\cap\mathbb{R})\quad\mathrm{and}\quad\sup(\Omega_{t}%
\cap\mathbb{R})<M.
\]
In particular, every point $\lambda$ in $\overline{\Omega}_{t}$ has
$m<\operatorname{Re}\lambda<M.$
\end{proposition}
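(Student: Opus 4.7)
The plan is to reduce everything to a uniform bound on $a_t(a_0)$ over $a_0 \in \Lambda_t \cap \mathbb{R}$. First I would observe that $\Omega_t \cap \mathbb{R}$ coincides with $a_t(\Lambda_t \cap \mathbb{R})$: by Theorem \ref{intro.thm} we have $\Omega_t = \{a+ib : |b|<b_t(a)\}$, and the identity $b_t(a_t(a_0))=2v_t(a_0)$ from Proposition \ref{Jprops.prop} shows that $a \in \Omega_t$ if and only if $a = a_t(a_0)$ for some $a_0$ with $v_t(a_0)>0$, i.e.\ for some $a_0 \in \Lambda_t\cap\mathbb{R}$. So it is enough to bound $a_t$ strictly between $m$ and $M$ uniformly on $\Lambda_t\cap\mathbb{R}$ and pass to the closure.

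Next, combining the two defining identities (\ref{p0EqIntro}) and (\ref{p1EqIntro})---which say $\int d\mu(x)/((a_0-x)^2+v_t(a_0)^2) = 1/t$ and $a_t(a_0) = t\int x\, d\mu(x)/((a_0-x)^2+v_t(a_0)^2)$---yields the pair of clean identities
\[
M - a_t(a_0)\ =\ t\int_{\mathbb{R}} \frac{M-x}{(a_0-x)^{2}+v_t(a_0)^{2}}\, d\mu(x),\quad a_t(a_0) - m\ =\ t\int_{\mathbb{R}} \frac{x-m}{(a_0-x)^{2}+v_t(a_0)^{2}}\, d\mu(x).
\]
Both integrands are non-negative on $\mathrm{supp}(\mu)\subseteq[m,M]$, which gives the non-strict bounds $m\le a_t(a_0)\le M$ for free. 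To upgrade them to uniform strict bounds, I would observe that $\Lambda_t$ is bounded (because $\int d\mu(x)/|\lambda_0-x|^2 \to 0$ as $|\lambda_0|\to\infty$, by compactness of $\mathrm{supp}(\mu)$) and that $v_t(a_0)^2\le t$ directly from its defining equation. Hence a single finite constant $C=C(t,\mu)$ bounds $(a_0-x)^2+v_t(a_0)^2$ from above for all $a_0\in\Lambda_t\cap\mathbb{R}$ and $x\in\mathrm{supp}(\mu)$, giving
\[
M - a_t(a_0)\ \ge\ \frac{t(M-\bar{x})}{C},\quad a_t(a_0) - m\ \ge\ \frac{t(\bar{x}-m)}{C},\quad \bar{x}:=\int x\,d\mu.
\]
Assumption \ref{notDirac.assumption} (that $\mu$ is not a $\delta$-measure) forces $\bar{x}\in(m,M)$, so both lower bounds are strictly positive and independent of $a_0$. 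This already gives $\inf(\Omega_t\cap\mathbb{R})>m$ and $\sup(\Omega_t\cap\mathbb{R})<M$.

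For the ``in particular'' clause, any $\lambda \in \overline{\Omega}_t$ is a limit of points $\lambda_n = a_n + ib_n \in \Omega_t$; each $a_n$ itself lies in $\Omega_t\cap\mathbb{R}$ because $|b_n|<b_t(a_n)$ forces $b_t(a_n)>0$. The uniform two-sided bounds from the previous step pass to the limit to yield $m<\mathrm{Re}\,\lambda<M$. The only substantive point in the entire argument is the appeal to the non-delta hypothesis to promote $\bar{x}\in[m,M]$ to $\bar{x}\in(m,M)$; given the structural results already established for $a_t$, $v_t$, and $\Lambda_t$, there is no real obstacle beyond this.
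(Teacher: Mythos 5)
Your argument is correct. It rests on the same basic observation as the paper's proof---that $a_t(a_0)=t\int x\,d\mu(x)/((a_0-x)^2+v_t(a_0)^2)$ is a weighted average of $x$ against a kernel whose total mass is $1/t$, so that $m\le a_t(a_0)\le M$, with strictness coming from Assumption \ref{notDirac.assumption}---but the execution is different. The paper exploits the monotonicity of $a_t$ (Point \ref{atHomeo.point} of Proposition \ref{Jprops.prop}): it sets $\tilde a_0=\sup(\Lambda_t\cap\mathbb{R})$, notes $v_t(\tilde a_0)=0$ and $\int d\mu(x)/(\tilde a_0-x)^2\le 1/t$, writes $\sup(\Omega_t\cap\mathbb{R})=a_t(\tilde a_0)=t\int x\,d\mu(x)/(\tilde a_0-x)^2\le M$, and invokes the non-$\delta$ hypothesis for strictness; only the single endpoint is examined. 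You instead bound $a_t$ \emph{uniformly} on the interior $\Lambda_t\cap\mathbb{R}$, using the exact identities $\int d\mu/((a_0-x)^2+v_t^2)=1/t$ and the rewriting $M-a_t(a_0)=t\int (M-x)\,d\mu(x)/((a_0-x)^2+v_t(a_0)^2)$, together with boundedness of $\Lambda_t$ and $v_t^2\le t$, to get the quantitative gap $t(M-\bar x)/C$. Your route costs a little more bookkeeping but buys two things: an explicit uniform margin (which also makes the passage to $\overline{\Omega}_t$ in the ``in particular'' clause immediate), and it avoids evaluating the integral formula for $a_t$ at the degenerate endpoint where $v_t=0$---a point at which the paper's equality $a_t(\tilde a_0)=t\int x\,d\mu(x)/(\tilde a_0-x)^2$ tacitly uses that $\int d\mu(x)/(\tilde a_0-x)^2$ actually equals $1/t$ there, a small continuity point your version never needs. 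The one step you should make explicit is the identification $\Omega_t\cap\mathbb{R}=a_t(\Lambda_t\cap\mathbb{R})$, which you correctly justify via $b_t(a_t(a_0))=2v_t(a_0)$ and the fact that $a_t$ is a strictly increasing bijection of $\mathbb{R}$; with that, the proof is complete.
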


\begin{proof}
Let $\tilde{a}_{0}=\sup(\Lambda_{t}\cap\mathbb{R})$. Then $v_{t}(\tilde{a}%
_{0})=0$, which means (Proposition \ref{vt.prop}) that
\[
\int_{\mathbb{R}}\frac{d\mu(x)}{(\tilde{a}_{0}-x)^{2}}\leq\frac{1}{t}.
\]
Then
\[
\sup(\Omega_{t}\cap\mathbb{R})=a_{t}(\tilde{a}_{0})=t\int_{\mathbb{R}}%
\frac{x\,d\mu(x)}{(\tilde{a}_{0}-x)^{2}}\leq M.
\]
Because of our standing assumption that $\mu$ is not a $\delta$-measure, this
inequality is strict. The inequality for $\inf(\Omega_{t}\cap\mathbb{R})$ can
be proved similarly.
\end{proof}

\subsection{Regularity\label{inside.IVT}}

Define a function $\tilde{S}$ by
\[
\tilde{S}(t,\lambda,z)=S(t,\lambda,z^{2})
\]
for $z>0$.

\begin{proposition}
\label{insideRegularity.prop}Fix a time $t>0$ and a point $\lambda^{\ast}%
\in\Omega_{t}.$ Then the function $(\lambda,z)\mapsto\tilde{S}(t,\lambda,z)$
extends to a real analytic function defined in a neighborhood of
$(\lambda^{\ast},0)$ inside $\mathbb{C}\times\mathbb{R}.$
\end{proposition}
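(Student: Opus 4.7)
The strategy mirrors the proof of Proposition \ref{outsideRegularity.prop}, but adapted to the fact that at the target point $\varepsilon(t)$ vanishes \emph{quadratically} in the initial data (since $\varepsilon(t)=\varepsilon_0(1-tp_0)^2$ and we need $p_0=1/t$). The natural remedy is to use $z_0:=\sqrt{\varepsilon_0}$ as the initial variable, so the corresponding terminal variable $z(t):=z_0(1-tp_0)$ vanishes simply. I would therefore introduce
\[
\tilde V_t(a_0,b_0,z_0):=\bigl(a(t),\,b(t),\,z_0(1-tp_0)\bigr),
\]
where $a(t),b(t)$ are as in Proposition \ref{solveODE.prop} and $p_0,p_{a,0},p_{b,0}$ are the integrals of (\ref{initialMomenta}) evaluated with $\varepsilon_0=z_0^2$. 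For $\lambda^\ast=a^\ast+ib^\ast\in\Omega_t$, Summary \ref{surject.summary} supplies the point $(a_0^\ast,b_0^\ast,z_0^\ast):=\bigl(a_0^t(a^\ast),\,b^\ast/2,\,\sqrt{v_t(a_0^\ast)^2-(b^\ast)^2/4}\bigr)$; since $\lambda^\ast\in\Omega_t$ forces $|b^\ast|<2v_t(a_0^\ast)$, we get $z_0^\ast>0$, and by construction $\tilde V_t(a_0^\ast,b_0^\ast,z_0^\ast)=(\lambda^\ast,0)$. Because $z_0^\ast>0$, the denominator $(a_0-x)^2+b_0^2+z_0^2$ is bounded away from $0$ for $x\in\operatorname{supp}\mu$ in a neighborhood of $(a_0^\ast,b_0^\ast,z_0^\ast)$, so $\tilde V_t$ is real-analytic there.

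\textbf{Jacobian.} The core technical step is to show $D\tilde V_t(a_0^\ast,b_0^\ast,z_0^\ast)$ is invertible. Two algebraic simplifications drive the computation. First, $p_0$ and $p_1$ depend on $b_0,z_0$ only through $b_0^2+z_0^2$, so $\partial_{b_0}(\cdot)=(b_0/z_0)\partial_{z_0}(\cdot)$; the row operation \emph{``add $(b_0/z_0)$ times the $z(t)$-row to the $b(t)$-row''}, combined with the identity $1+tp_0=2$ at $p_0=1/t$, collapses the middle row to $(0,2,0)$. Writing $h(x):=\bigl((a_0-x)^2+b_0^2+z_0^2\bigr)^{-1}$ and setting $I_2:=\int h^2\,d\mu$, $I_{2,a}:=\int(a_0-x)^2h^2\,d\mu$, $L:=\int(a_0-x)h^2\,d\mu$, one checks that $a_0I_2-\int xh^2\,d\mu=L$ and $\partial_{a_0}p_1-a_0\partial_{a_0}p_0=2I_{2,a}$. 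Expanding along the cleaned middle row and simplifying the remaining $2\times 2$ minor reduces the determinant to
\[
\det D\tilde V_t(a_0^\ast,b_0^\ast,z_0^\ast)=8(z_0^\ast)^2t^2\bigl(I_2I_{2,a}-L^2\bigr).
\]

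\textbf{Main obstacle and completion.} The hard part is the sign of $I_2I_{2,a}-L^2$. Cauchy--Schwarz applied to $(a_0-x)h$ and $h$ gives $L^2\le I_2I_{2,a}$, with equality iff $a_0-x$ is $\mu$-a.s.\ constant; Assumption \ref{notDirac.assumption} rules this out, so the determinant is strictly positive. The inverse function theorem then yields a real-analytic inverse $\tilde V_t^{-1}$ on a neighborhood $U$ of $(\lambda^\ast,0)$ in $\mathbb{C}\times\mathbb{R}$. I would finally define
\[
\tilde S_{\mathrm{ext}}(\lambda,z):=\mathrm{HJ}\bigl(\tilde V_t^{-1}(\lambda,z),t\bigr),
\]
reusing $\mathrm{HJ}$ from (\ref{HJdef}) with $\varepsilon_0=z_0^2$, and verify agreement with $\tilde S$ on the $z>0$ part of $U$: for such $(\lambda,z)$, continuity forces the preimage $(a_0,b_0,z_0)=\tilde V_t^{-1}(\lambda,z)$ to have $z_0>0$, so $1-tp_0=z/z_0>0$ and $p_0<1/t$. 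The Hamiltonian flow then survives up to time $t$ with $\varepsilon(s)=z_0^2(1-sp_0)^2>0$ throughout, so the first Hamilton--Jacobi formula (\ref{firstHJ}) yields $\tilde S_{\mathrm{ext}}(\lambda,z)=S(t,\lambda,z^2)=\tilde S(t,\lambda,z)$, giving the desired real-analytic extension.
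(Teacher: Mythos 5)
Your proposal is correct and follows essentially the same route as the paper: define the terminal-data map with third coordinate $\sqrt{\varepsilon(t)}$, check its Jacobian at the initial point supplied by Summary \ref{surject.summary} is invertible via a Cauchy--Schwarz argument that is strict by Assumption \ref{notDirac.assumption}, and define the extension as $\mathrm{HJ}$ composed with the local inverse, verifying agreement with $S(t,\lambda,z^2)$ for $z>0$ because then $p_0<1/t$ and the Hamilton--Jacobi formula applies. The only difference is cosmetic: you use $z_0=\sqrt{\varepsilon_0}$ as the initial variable while the paper keeps $\varepsilon_0$ (the paper's map $W_t$), and since $\varepsilon_0^t(\lambda_0)>0$ the two parametrizations are analytically equivalent; your determinant $8(z_0^\ast)^2t^2(I_2I_{2,a}-L^2)$ matches the paper's $4t^3\sqrt{\varepsilon_0}\,(q_2+v_t(a_0)^2q_0)(q_0q_2-q_1^2)$ after the chain-rule factor $2z_0$ and the identity $p_0=1/t$.
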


Once the proposition is established, the function $s_{t}(\lambda
):=\lim_{\varepsilon\rightarrow0^{+}}S(t,\lambda,\varepsilon)$ can be computed
as $s_{t}(\lambda)=\tilde{S}(t,\lambda,0).$ Since $\tilde{S}(t,\lambda,z)$ is
smooth in $\lambda$ and $z,$ we can compute $s_{t}$ (or any of its
derivatives) at $\lambda^{\ast}$ by evaluating $\tilde{S}(t,\lambda,z)$ (or
any of its derivatives) along any path where $\lambda\rightarrow\lambda^{\ast
}$ and $z\rightarrow0.$ Thus, Proposition \ref{insideRegularity.prop} will
allow us to make rigorous the argument leading to
\ref{sDerivatives.conclusion}. Specifically, we will be able to conclude that
the left-hand sides of (\ref{InsideLim1}) and (\ref{InsideLim2}) are actually
equal to $\partial s_{t}/\partial a$ and $\partial s_{t}/\partial b,$ respectively.

\begin{remark}
The function $S$ itself does not have a smooth extension of the same sort that
$\tilde{S}$ does. Indeed, since $\sqrt{\varepsilon}p_{\varepsilon}$ is a
constant of motion, the second Hamilton--Jacobi formula (\ref{secondHJ}) tells
us that $\partial S/\partial\varepsilon$ must blow up like $1/\sqrt
{\varepsilon}$ as we approach $(t,\lambda^{\ast},0)$ along a solution of the
system (\ref{HamSystem}). The same reasoning tells us that the extended
$\tilde{S}$ does \emph{not} satisfy $\tilde{S}(t,\lambda,z)=S(t,\lambda
,z^{2})$ for $z<0$. Indeed, since $\sqrt{\varepsilon}p_{\varepsilon}$ is a
constant of motion, $\frac{\partial\tilde{S}}{\partial z}(t,\lambda
,z)=2\sqrt{\varepsilon}\frac{\partial S}{\partial\varepsilon}(t,\lambda
,z^{2})$ has a nonzero limit as $z\rightarrow0$. Thus, $\tilde{S}$ cannot have
a smooth extension that is even in $z$.
\end{remark}

To prove Proposition \ref{insideRegularity.prop}, we will use a strategy
similar to the one in Section \ref{outsideRigorous.sec}. For each $t>0,$ we
define a map
\[
W_{t}(a_{0},b_{0},\varepsilon_{0})=(a_{t},b_{t},z_{t})
\]
by
\begin{align*}
a_{t}  &  =a(t,a_{0},b_{0},\varepsilon_{0})\\
b_{t}  &  =b(t,a_{0},b_{0},\varepsilon_{0})\\
z_{t}  &  =\sqrt{\varepsilon(t,a_{0},b_{0},\varepsilon_{0})}%
\end{align*}
where $a$, $b$, $\varepsilon$ are defined as in Proposition
\ref{solveODE.prop}. The last component $z_{t}$ can be expressed explicitly
as
\[
z_{t}=\sqrt{\varepsilon_{0}}\left(  1-tp_{0}\right)  .
\]

The map $W_{t}$ is initially defined only for%
\[
\varepsilon_{0}>\varepsilon_{0}^{t}(\lambda_{0}):=v_{t}(a_{0})^{2}-b_{0}^{2}.
\]
This condition guarantees that $p_{0}<1/t,$ so that the lifetime of the path
is greater than $t.$ But for each $t>0$ and $\lambda_{0}\in\Lambda_{t},$ we
can extend $W_{t}$ to a neighborhood of $(a_{0},b_{0},\varepsilon_{0}%
^{t}(\lambda_{0}))$, simply by using the same formulas. We note that if
$\varepsilon_{0}>\varepsilon_{0}^{t}(\lambda_{0}),$ then $p_{0}<1/t$ so that
$z_{t}>0$; and if $\varepsilon_{0}<\varepsilon_{0}^{t}(\lambda_{0})$ then
$p_{0}>1/t$ so that $z_{t}<0.$

\begin{lemma}
\label{insideIVT.lem}For all $t>0$ and $\lambda_{0}=a_{0}+ib_{0}\in\Lambda
_{t},$ the Jacobian of $W_{t}$ at $(a_{0},b_{0},\varepsilon_{0}^{t}%
(\lambda_{0}))$ is invertible, where $\varepsilon_{0}^{t}(\lambda_{0}%
)=v_{t}(a_{0})^{2}-b_{0}^{2}.$
\end{lemma}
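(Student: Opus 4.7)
The plan is to change variables so the structure of $DW_t$ becomes visible, then use a harmonicity identity and Cauchy--Schwarz. Since $p_0$ and $p_{a,0}$ depend on $(a_0,b_0,\varepsilon_0)$ only through $a_0$ and $u := b_0^2 + \varepsilon_0$, and $p_{b,0} = 2b_0 p_0$, I would first replace $\varepsilon_0$ by $u$; this substitution has Jacobian one, so it preserves the determinant. In the new coordinates,
\[
a_t = a_0 - \tfrac{t}{2} p_{a,0}(a_0,u), \quad b_t = b_0(1 + t p_0(a_0,u)), \quad z_t = \sqrt{u - b_0^2}\,(1 - t p_0(a_0,u)),
\]
and the hypothesis $\varepsilon_0 = v_t(a_0)^2 - b_0^2$ becomes $u = v_t(a_0)^2$, at which point $p_0 = 1/t$, so $z_t = 0$ and $b_t = 2b_0$.

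Write $A = \partial p_{a,0}/\partial a_0$, $B = \partial p_{a,0}/\partial u = \partial p_0/\partial a_0$, and $C = \partial p_0/\partial u < 0$, all evaluated at the special point. By inspection $\partial a_t/\partial b_0 = 0$ (no $b_0$-dependence) and $\partial z_t/\partial b_0 = 0$ (the only $b_0$-dependence sits in $\sqrt{u - b_0^2}$, which is multiplied by $1 - tp_0 = 0$), while $\partial b_t/\partial b_0 = 1 + tp_0 = 2$. Expanding $\det(DW_t)$ along the $b_0$-column collapses it to a $2\times 2$ determinant, giving
\[
\det(DW_t) = -2t\sqrt{\varepsilon_0^*}\bigl[\,C - \tfrac{t}{2}(AC - B^2)\,\bigr], \qquad \varepsilon_0^* := v_t(a_0)^2 - b_0^2 > 0.
\]

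To conclude I would use two ingredients. First, the function $\tilde\phi(a_0,y) := -\int \log((a_0-x)^2 + y^2)\,d\mu(x) = -2\int \log|z - x|\,d\mu(x)$, with $z = a_0 + iy$, is harmonic on $\{y > 0\}$ because $\mathrm{supp}(\mu)\subset\mathbb{R}$; converting harmonicity to the variable $u = y^2$ produces the identity $A = -2 p_0 - 4 u C$, which at the special point reads $A = -2/t - 4 u^* C$ with $u^* := v_t(a_0)^2$. Substituting reduces the bracket to $2\bigl[\,C + u^* t C^2 + \tfrac{t}{4} B^2\,\bigr]$. Second, Cauchy--Schwarz applied to $f(x) = (a_0-x)/((a_0-x)^2 + u^*)$ and $g(x) = 1/((a_0-x)^2 + u^*)$ in $L^2(\mu)$ yields $(B/2)^2 \le (p_0 + u^* C)(-C) = -C/t - u^* C^2$; the inequality is strict unless $a_0 - x$ is $\mu$-a.e.\ constant, which Assumption~\ref{notDirac.assumption} rules out. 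Hence $C + u^* t C^2 + \tfrac{t}{4} B^2 < 0$, and $\det(DW_t) > 0$. The main obstacle is spotting the harmonicity identity; without it the sign of $C - \tfrac{t}{2}(AC - B^2)$ is not manifest, but once the identity is in hand, the remaining sign analysis is a one-line application of Cauchy--Schwarz.
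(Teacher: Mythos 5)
Your proof is correct, and its skeleton is the same as the paper's: compute the Jacobian of $W_{t}$ at the point where $p_{0}=1/t$, reduce the sign of the determinant to a strict Cauchy--Schwarz inequality, and invoke Assumption \ref{notDirac.assumption} for strictness. Where you differ is in the bookkeeping, and your version is arguably cleaner. The paper writes out the full $3\times3$ matrix in terms of $q_{0}=\int((a_{0}-x)^{2}+v_{t}(a_{0})^{2})^{-2}d\mu$, $q_{1}$, $q_{2}$, performs a column operation, and obtains $\det DW_{t}=4t^{3}\sqrt{\varepsilon_{0}}(q_{2}+v_{t}(a_{0})^{2}q_{0})(q_{0}q_{2}-q_{1}^{2})$; you instead pass to the variable $u=b_{0}^{2}+\varepsilon_{0}$ (a unimodular substitution, so the determinant is unchanged), which makes the $b_{0}$-column collapse to the single entry $2$ at the critical point and reduces everything to a $2\times2$ minor with no column manipulation. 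Your harmonicity identity $A=-2p_{0}-4uC$ for the logarithmic potential is precisely the paper's relation (\ref{p:in.q}), $p_{0}=q_{2}+v_{t}(a_{0})^{2}q_{0}$, in disguise, since $A=2(uq_{0}-q_{2})$ and $C=-q_{0}$; and your Cauchy--Schwarz step, $(B/2)^{2}\le(p_{0}+u^{*}C)(-C)$ with equality only for a $\delta$-measure, is literally $q_{1}^{2}\le q_{0}q_{2}$, the paper's final step. Indeed your expression $-4t\sqrt{\varepsilon_{0}^{*}}\bigl[C+tu^{*}C^{2}+\tfrac{t}{4}B^{2}\bigr]$ equals $4t^{2}\sqrt{\varepsilon_{0}^{*}}(q_{0}q_{2}-q_{1}^{2})$, which matches the paper's determinant after using $q_{2}+v_{t}(a_{0})^{2}q_{0}=p_{0}=1/t$. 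The trade-off: your route makes the degeneration structure at $1-tp_{0}=0$ transparent and avoids a messy expansion, at the cost of importing the harmonicity observation; the paper's route is a direct computation whose every entry is explicit. Both are complete proofs.
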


\begin{proof}
We introduce the notations
\[%
\begin{split}
q_{0}  &  =\int_{\mathbb{R}}\frac{d\mu(x)}{((a_{0}-x)^{2}+v_{t}(a_{0}%
)^{2})^{2}}\\
q_{1}  &  =\int_{\mathbb{R}}\frac{(a_{0}-x)\,d\mu(x)}{((a_{0}-x)^{2}%
+v_{t}(a_{0})^{2})^{2}}\\
q_{2}  &  =\int_{\mathbb{R}}\frac{(a_{0}-x)^{2}\,d\mu(x)}{((a_{0}-x)^{2}%
+v_{t}(a_{0})^{2})^{2}}.
\end{split}
\]
Note that $q_{0}>0$ and $q_{2}>0$. When $\varepsilon_{0}=\varepsilon_{0}%
^{t}(\lambda_{0})$, we can write $p_{0}$ in terms of $q_{0}$ and $q_{2}$ as
\begin{equation}
p_{0}=q_{2}+v_{t}(a_{0})^{2}q_{0}. \label{p:in.q}%
\end{equation}

We now compute the Jacobian matrix of $W_{t}$ at the point $(\lambda
_{0},\varepsilon_{0}^{t}(\lambda_{0})),$ with $\lambda_{0}\in\Lambda_{t}.$
Using the formulas (\ref{initialMomenta}) for $p_{a,0}$ and $p_{b,0},$ we can
compute that
\[%
\begin{array}
[c]{lll}%
\frac{\partial p_{a,0}}{\partial a_{0}}=2(-q_{2}+v_{t}(a_{0})^{2}q_{0}) &
\frac{\partial p_{a,0}}{\partial b_{0}}=-4b_{0}q_{1} & \frac{\partial p_{a,0}%
}{\partial\varepsilon_{0}}=-2q_{1}\\[3pt]%
\frac{\partial p_{0}}{\partial a_{0}}=-2q_{1} & \frac{\partial p_{0}}{\partial
b_{0}}=-2b_{0}q_{0} & \frac{\partial p_{0}}{\varepsilon_{0}}=-q_{0}%
\end{array}
.
\]
The Jacobian of $W_{t}$ at $(a_{0},b_{0},\varepsilon_{0}^{t}(\lambda_{0}))$
then has the following form,
\[
DW_{t}=%
\begin{pmatrix}
t(\frac{1}{t}+q_{2}-v_{t}(a_{0})^{2}q_{0}) & 2tb_{0}q_{1} & tq_{1}\\
-2tb_{0}q_{1} & t(\frac{1}{t}+p_{0}-2b_{0}^{2}q_{0}) & -tb_{0}q_{0}\\
2t\sqrt{\varepsilon_{0}}q_{1} & 2t\sqrt{\varepsilon_{0}}b_{0}q_{0} &
\frac{1-tp_{0}}{2\sqrt{\varepsilon_{0}}}+t\sqrt{\varepsilon_{0}}q_{0}%
\end{pmatrix}
.
\]
Since $\varepsilon_{0}=\varepsilon_{0}^{t}(\lambda_{0}),$ we have $1/t=p_{0}$,
and by (\ref{p:in.q}), the $(1,1)$-entry can be simplified to $2tq_{2}$. The
Jacobian matrix $DW_{t}$ then simplifies to
\[
DW_{t}=2t%
\begin{pmatrix}
q_{2} & b_{0}q_{1} & \frac{1}{2}q_{1}\\[3pt]%
-b_{0}q_{1} & (q_{2}+(v_{t}(a_{0})^{2}-b_{0}^{2})q_{0}) & -\frac{1}{2}%
b_{0}q_{0}\\[3pt]%
\sqrt{\varepsilon_{0}}q_{1} & \sqrt{\varepsilon_{0}}b_{0}q_{0} & \frac{1}%
{2}\sqrt{\varepsilon_{0}}q_{0}%
\end{pmatrix}
.
\]

We compute the determinant of $DW_{t}$ by first adding $-2b_{0}$ times the
third column to the second column and then using a cofactor expansion along
the second column. The result is
\[
\det DW_{t}=4t^{3}\sqrt{\varepsilon_{0}}(q_{2}+v_{t}(a_{0})^{2}q_{0}%
)(q_{2}q_{0}-q_{1}^{2})
\]
Now, by the Cauchy--Schwarz inequality,
\[
q_{0}q_{2}-q_{1}^{2}\geq0
\]
and it cannot be an equality unless $\mu$ is a $\delta$-measure. Therefore, we
conclude that $\det DW_{t}$ is positive, establishing the proposition.
\end{proof}

\begin{proof}
[Proof of Proposition \ref{insideRegularity.prop}]The proof is extremely
similar to the proof of Proposition \ref{outsideRegularity.prop}; the desired
extension is given by the map%
\[
(\lambda,z)\mapsto\mathrm{HJ}(W_{t}^{-1}(\lambda,z)),
\]
where $\mathrm{HJ}$ is the Hamilton--Jacobi function in (\ref{HJdef}). Take
$z>0$ and let $(\mu_{0},\delta_{0})=W_{t}^{-1}(\lambda,z).$ Then we must have
$\delta_{0}>\varepsilon_{0}^{t}(\mu_{0})$ or else $z_{t}(\mu_{0},\delta
_{0})=z$ would be negative. Thus, the lifetime of the path will be greater
than $t$ and the Hamilton--Jacobi formula will apply. Thus, the
Hamilton--Jacobi formula (\ref{firstHJ}) shows that $\mathrm{HJ}(W_{t}%
^{-1}(\lambda,z))$ agrees with $\tilde{S}(t,\lambda,z)$ for $z>0.$
\end{proof}

\subsection{Computing the Brown measure\label{computeBrown.sec}}

Using Proposition \ref{insideRegularity.prop}, we can show that the left-hand
sides of (\ref{InsideLim1})\ and (\ref{InsideLim2}) are actually equal to
$\partial s_{t}/\partial a$ and $\partial s_{t}/\partial b.$

\begin{corollary}
\label{cor:stDeri}For any $\lambda=a+ib$ in $\Omega_{t}$ we have
\[
\frac{\partial s_{t}}{\partial a}=\frac{2}{t}(a_{0}^{t}(a)-a),\quad
\frac{\partial s_{t}}{\partial b}=\frac{b}{t},
\]
so that
\[
\Delta s_{t}(\lambda)=\frac{2}{t}\left(  \frac{da_{0}^{t}(a)}{da}-\frac{1}%
{2}\right)  .
\]

\end{corollary}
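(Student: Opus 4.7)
The plan is to rigorize the heuristic calculation in the preamble (Conclusion \ref{sDerivatives.conclusion} and the limits (\ref{InsideLim1})--(\ref{InsideLim2})) by combining the smooth extension from Proposition \ref{insideRegularity.prop} with the second Hamilton--Jacobi formula (\ref{secondHJ}). Fix $\lambda = a+ib \in \Omega_t$ and let $\lambda_0 = a_0 + ib_0 = U_t^{-1}(\lambda) \in \Lambda_t$, where $U_t$ is the homeomorphism of Theorem \ref{thm:lambda0tolambda}; in particular $b_0 = b/2$ and $a_0 = a_0^t(a)$. Set $\varepsilon_0^* = v_t(a_0)^2 - b_0^2 > 0$, so that with initial data $(\lambda_0, \varepsilon_0^*)$ one has $p_0 = 1/t$ and the trajectory reaches $(\lambda,0)$ in the limit $s \to t^-$.

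For each $\delta > 0$ run the Hamiltonian flow with initial data $(\lambda_0, \varepsilon_0^* + \delta)$; since $\varepsilon_0^* + \delta > \varepsilon_0^*(\lambda_0)$, one has $p_0 < 1/t$, so the lifetime exceeds $t$ and (\ref{secondHJ}) applies at time $t$. Using that $p_a$ and $p_b$ are constants of motion, together with (\ref{pb0}) and (\ref{pa0}),
\begin{align*}
\frac{\partial S}{\partial a}(t, \lambda(t), \varepsilon(t)) &= p_{a,0}(\delta) = 2 a_0 p_0(\delta) - 2 p_1(\delta), \\
\frac{\partial S}{\partial b}(t, \lambda(t), \varepsilon(t)) &= p_{b,0}(\delta) = 2 b_0 p_0(\delta),
\end{align*}
where $p_0(\delta), p_1(\delta)$ are the initial momenta computed from $(\lambda_0, \varepsilon_0^* + \delta)$. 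As $\delta \to 0^+$, $p_0(\delta) \to 1/t$ and $p_1(\delta) \to a/t$ by (\ref{p1condition}), while $\lambda(t) \to \lambda$ and $\varepsilon(t) \to 0$.

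The delicate point, which is precisely the issue flagged in Section \ref{insideTechnical.sec}, is that the Brown measure requires $\partial s_t/\partial a$ and $\partial s_t/\partial b$ at $\lambda$, and $s_t$ is defined as the limit of $S(t,\lambda,\varepsilon)$ with $\lambda$ \emph{held fixed}. Here I resolve this using Proposition \ref{insideRegularity.prop}: the function $\tilde{S}(t,\lambda,z) = S(t,\lambda,z^2)$ extends real-analytically to a neighborhood of $(\lambda,0)$, so $s_t(\lambda) = \tilde{S}(t,\lambda,0)$ and the partial derivatives $\partial s_t/\partial a, \partial s_t/\partial b$ agree with the derivatives of the smooth extension $\tilde{S}(t,\cdot,0)$ evaluated at $\lambda$. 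Since those derivatives are continuous in $(\lambda, z)$, their value at $(\lambda, 0)$ can be computed as the limit along \emph{any} path ending there --- in particular along the path $(\lambda(t), z(t))$ parametrized by $\delta$. This yields
\[
\frac{\partial s_t}{\partial a}(\lambda) = \frac{2 a_0^t(a)}{t} - \frac{2a}{t}, \qquad \frac{\partial s_t}{\partial b}(\lambda) = \frac{2 b_0}{t} = \frac{b}{t}.
\]

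The Laplacian formula then follows by differentiating: $\partial^2 s_t/\partial a^2 = (2/t)(da_0^t/da - 1)$ and $\partial^2 s_t/\partial b^2 = 1/t$, so
\[
\Delta s_t(\lambda) = \frac{2}{t}\frac{da_0^t}{da} - \frac{2}{t} + \frac{1}{t} = \frac{2}{t}\left(\frac{da_0^t(a)}{da} - \frac{1}{2}\right).
\]
The main obstacle is the regularity reduction justifying that the limit along the trajectory equals the partial derivative of $s_t$; everything else is a direct substitution using the constants-of-motion structure of the flow and the defining equations for $v_t$ and $a_0^t$.
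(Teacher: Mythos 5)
Your proposal is correct and follows essentially the same route as the paper: it rigorizes the limits (\ref{InsideLim1})--(\ref{InsideLim2}) by invoking the real-analytic extension $\tilde{S}$ of Proposition \ref{insideRegularity.prop}, so that $s_{t}(\lambda)=\tilde{S}(t,\lambda,0)$ and the derivatives may be evaluated along the Hamiltonian trajectory, exactly as in the paper's proof of Corollary \ref{cor:stDeri}. The explicit $\delta$-parametrization of the approach to $(\lambda_{0},\varepsilon_{0}^{t}(\lambda_{0}))$ and the use of the constants of motion with (\ref{pb0}), (\ref{pa0}) match the paper's argument in substance.
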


\begin{proof}
Fix $t>0$ and $\lambda^{\ast}\in\Omega_{t}.$ By Proposition
\ref{insideRegularity.prop}, the function%
\[
s_{t}(\lambda):=\lim_{\varepsilon\rightarrow0^{+}}S(t,\lambda,\varepsilon)
\]
may be computed, for $\lambda$ in a neighborhood of $\lambda^{\ast},$ as%
\[
s_{t}(\lambda)=\tilde{S}(t,\lambda,0).
\]
Since $\tilde{S}$ is smooth, we can evaluate $\tilde{S}$ or any of its
derivatives at $(t,\lambda,0)$ by taking limits along any path we choose with
$t$ fixed. Thus, the left-hand sides of (\ref{InsideLim1}) and
(\ref{InsideLim2}) are actually equal to $\partial s_{t}/\partial a$ and
$\partial s_{t}/\partial b.$ The formula for $\Delta s_{t}$ then follows by
taking second derivatives with respect to $a$ and to $b$ and simplifying.
\end{proof}

We now establish our main result, a formula for the Brown measure of
$x_{0}+i\sigma_{t}.$

\begin{theorem}
\label{thm:main} The open set $\Omega_{t}$ is a set of full measure for the
Brown measure of $x_{0}+i\sigma_{t}.$ Inside $\Omega_{t},$ the Brown measure
is absolutely continuous with a strictly positive density $w_{t}$ given by
\begin{equation}
w_{t}(\lambda)=\frac{1}{2\pi t}\left(  \frac{da_{0}^{t}(a)}{da}-\frac{1}%
{2}\right)  ,\quad\lambda=a+ib. \label{eq:density}%
\end{equation}
Since $w_{t}(\lambda)$ is independent of $b,$ we see that $w_{t}$ is constant
along the vertical segments inside $\Omega_{t}.$
\end{theorem}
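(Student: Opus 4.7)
The plan is to combine the two main ingredients already proven. Theorem~\ref{thm:outside} gives $\Delta s_t \equiv 0$ on the open set $\overline{\Omega}_t^c$, so the Brown measure $\tfrac{1}{4\pi}\Delta s_t$ vanishes there. Corollary~\ref{cor:stDeri} evaluates $\Delta s_t$ on the open set $\Omega_t$ and, after dividing by $4\pi$, produces exactly the candidate density $w_t$ in~(\ref{eq:density}), which is manifestly independent of $b = \operatorname{Im}\lambda$.

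For strict positivity of $w_t$: if $\lambda = a+ib \in \Omega_t$ then $|b| < b_t(a) = 2 v_t(a_0^t(a))$ by Proposition~\ref{Jprops.prop}(\ref{graphs.point}), so $v_t(a_0^t(a)) > 0$, and Point~(\ref{datDt.point}) of the same proposition gives $0 < a_t'(a_0^t(a)) < 2$. Since $a_0^t$ is locally inverse to the strictly increasing map $a_t$, we have $(a_0^t)'(a) > 1/2$, so $w_t(\lambda) > 0$.

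It remains to show that $\Omega_t$ has full Brown measure, which amounts to showing that the distributional Laplacian of $s_t$ carries no singular component on the Jordan curve $\partial \Omega_t$. The plan is to show that $s_t$ is continuous and $C^1$ across $\partial \Omega_t$, so that the distributional and pointwise Laplacians agree and the Lebesgue-null set $\partial \Omega_t$ carries no mass. The gradient from inside is given by Corollary~\ref{cor:stDeri}, while from outside the second Hamilton--Jacobi formula~(\ref{secondHJ}), applied to the extension of Proposition~\ref{outsideRegularity.prop}, yields $\partial_a s_t = p_{a,0}$ and $\partial_b s_t = p_{b,0}$ evaluated at $\varepsilon_0 = 0$ with $\lambda_0 = J_t^{-1}(\lambda)$. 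At a boundary point $\lambda = J_t(a_0 + i v_t(a_0))$, the defining identity~(\ref{vtDef}) for $v_t$ simplifies the initial momenta~(\ref{initialMomenta}) to $p_{b,0} = 2 v_t(a_0)/t = b/t$ and $p_{a,0} = 2(a_0 - a_t(a_0))/t = 2(a_0^t(a) - a)/t$, matching the interior values exactly.

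The main obstacle will be this last step: rigorously upgrading pointwise matching of one-sided derivatives into genuine $C^1$ regularity of $s_t$ across $\partial\Omega_t$, in a form strong enough to legitimize the integration by parts that kills any singular boundary contribution to the distributional Laplacian. Concretely, this requires gluing the two real-analytic extensions provided by Propositions~\ref{outsideRegularity.prop} and~\ref{insideRegularity.prop} along the analytic arcs making up $\partial \Omega_t$, checking continuity of $s_t$ itself (using that both extensions agree on the boundary and that $s_t$ is in any case subharmonic), and handling the endpoints on the real axis where $v_t$ vanishes.
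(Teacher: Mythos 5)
Your computation of the density inside $\Omega_{t}$ (via Corollary \ref{cor:stDeri}), the positivity argument from Point \ref{datDt.point} of Proposition \ref{Jprops.prop}, and the vanishing outside $\overline{\Omega}_{t}$ (Theorem \ref{thm:outside}) all match the paper. The genuine gap is the full-measure step, which you reduce to proving that $s_{t}$ is $C^{1}$ across $\partial\Omega_{t}$ and then integrating by parts to exclude singular mass on the boundary. You correctly identify this as the main obstacle, but it is not merely a routine technicality left to the reader: the regularity machinery you would need degenerates exactly on $\partial\Omega_{t}$. Proposition \ref{outsideRegularity.prop} is proved only for $\lambda^{\ast}\in\overline{\Omega}_{t}^{c}$ and Proposition \ref{insideRegularity.prop} only for $\lambda^{\ast}\in\Omega_{t}$, and in both cases the inverse function theorem is applied at points where, on the boundary, the relevant Jacobians become singular: in Lemma \ref{insideIVT.lem} one has $\det DW_{t}=4t^{3}\sqrt{\varepsilon_{0}}\,(q_{2}+v_{t}(a_{0})^{2}q_{0})(q_{0}q_{2}-q_{1}^{2})$ with $\varepsilon_{0}=\varepsilon_{0}^{t}(\lambda_{0})\rightarrow0$ as $\lambda_{0}\rightarrow\partial\Lambda_{t}$, and in Lemma \ref{outsideIVT.lem} the factor $(1-tp_{0})^{2}$ vanishes when $T(\lambda_{0})=t$. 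So matching the one-sided boundary values of $\nabla s_{t}$ (which you compute correctly) does not by itself yield a two-sided $C^{1}$ extension; an entirely new argument near $\partial\Omega_{t}$ would be required. Moreover, even granting $C^{1}$ gluing along the open analytic arcs of $\partial\Omega_{t}$, your plan says nothing concrete about the points where $b_{t}$ vanishes (where the arcs meet the real axis): a subharmonic function can perfectly well have atoms in its Riesz measure at isolated points, so these must be excluded by some argument, and none is offered.

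The paper avoids all of this by a mass-accounting argument rather than boundary regularity: in Theorem \ref{push.thm} it takes the Brown measure $\rho_{t}$ of $x_{0}+c_{t}$, known from \cite{HZ} to be a probability measure on $\Lambda_{t}$ with an explicit density, and checks that its push-forward under the homeomorphism $U_{t}:\Lambda_{t}\rightarrow\Omega_{t}$ has density exactly $w_{t}$, i.e.\ it coincides with the already-computed restriction of $\mathrm{Brown}(x_{0}+i\sigma_{t})$ to $\Omega_{t}$. Hence the interior mass is already $1$, and no mass can sit on $\partial\Omega_{t}$ (curves, endpoints, or atoms). If you want to salvage your route, you either need to carry out a genuinely new boundary analysis (in the spirit of Section 7.4 of \cite{DHKBrown}, as the paper itself remarks), or replace it by the normalization argument via \cite{HZ}.
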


\begin{proof}
Corollary \ref{cor:stDeri} shows that in $\Omega_{t},$ the Brown measure of
$x_{0}+i\sigma_{t}$ has a density given by (\ref{eq:density}). It then follows
from Point \ref{datDt.point} of Proposition \ref{Jprops.prop} that
\[
\frac{da_{0}^{t}(a)}{da}>\frac{1}{2},
\]
showing that $w_{t}$ is positive in $\Omega_{t}$. It remains to show that
$\Omega_{t}$ is a set of full Brown measure. Since the Brown measure is zero
outside $\overline{\Omega}_{t},$ we see that $\Omega_{t}$ will have full
measure provided that the boundary of $\Omega_{t}$ has measure zero. While it
may be possible to prove this directly using the strategy in Section 7.4 of
\cite{DHKBrown}, we instead use the approach used in \cite{HZ}.

In Theorem \ref{push.thm}, we will consider a probability measure $\rho_{t}$
on $\Lambda_{t}.$ We will then show that the push-forward of $\rho_{t}$ under
the map $U_{t}:\Lambda_{t}\rightarrow\Omega_{t}$ agrees with $\mathrm{Brown}%
(x_{0}+i\sigma_{t})$ on $\Omega_{t}.$ Since the preimage of $\Omega_{t}$ under
$U_{t}$ is $\Lambda_{t}$ and $\rho_{t}(\Lambda_{t})=1,$ we see that the
$\mathrm{Brown}(x_{0}+i\sigma_{t})$ assigns full measure to $\Omega_{t}.$
\end{proof}

\section{Two results about push-forwards of the Brown
measure\label{pushforward.sec}}

In this section, we show how $\mathrm{Brown}(x_{0}+i\sigma_{t})$ is related to
two other measure by means of pushing forward under appropriate maps. To
motivate one of our results, let us consider the case that $x_{0}%
=\tilde{\sigma}_{s}$, a semicircular element of variance $s$ freely
independent of $\sigma_{t}.$ It is known (see \cite[Example 5.3]{BL} and
Section \ref{sect:ellipticLaw}) that in this case, the Brown measure of
$\tilde{\sigma}_{s}+i\sigma_{t}$ is uniformly distributed on an ellipse. It
follows that the distribution of $\operatorname{Re}\lambda$ with respect to
$\mathrm{Brown}(\tilde{\sigma}_{s}+i\sigma_{t})$ is semicircular---which is
the same (up to scaling by a constant) as the distribution of $\tilde{\sigma
}_{s}+\sigma_{t}.$ (See Figure \ref{ellipse.fig}.)

Point 2 of Theorem \ref{push.thm} generalizes the preceding result to the case
of arbitrary $x_{0},$ in which the map $\lambda\mapsto const.\operatorname{Re}%
\lambda$ is replaced by a certain map $Q_{t}:\overline{\Omega}_{t}%
\rightarrow\mathbb{R}.$ When the distribution of $x_{0}$ is semicircular,
$Q_{t}(\lambda)$ is a multiple of the real part of $\lambda,$ as in
(\ref{QtSemicircular}).%

\begin{figure}[ptb]%
\centering
\includegraphics[
height=2.2364in,
width=4.8282in
]%
{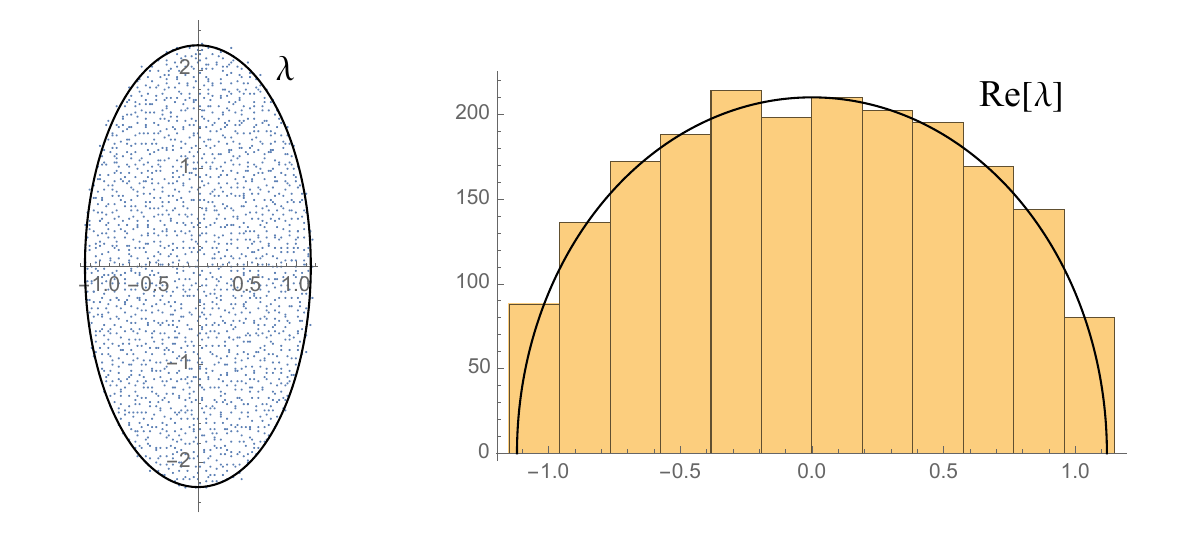}%
\caption{A random matrix approximation to the Brown measure of $x_{0}%
+i\sigma_{t}$ when $x_{0}$ is semicircular (left) and the distribution of the
real parts of the eigenvalues (right).}%
\label{ellipse.fig}%
\end{figure}

Recall that $c_{t}$ denotes the circular Brownian motion. We will make use of
the map $U_{t}:\overline{\Lambda}_{t}\rightarrow\overline{\Omega}_{t}$
described in Section \ref{sect:surjectivity}, and another map $Q_{t}%
:\overline{\Omega}_{t}\rightarrow\mathbb{R}$ which we now define. Recall from
Sections \ref{bianeResult.sec} and \ref{lambdaDomain.sec} that the inverse of
the map $J_{t}$ takes $\partial\Omega_{t}$ to $\partial\Lambda_{t}$ and that
the map $H_{t}$ takes $\partial\Lambda_{t}$ to $\mathbb{R},$ so that
$H_{t}\circ J_{t}^{-1}$ takes $\partial\Omega_{t}$ to $\mathbb{R}.$

\begin{definition}
Let $Q_{t}:\overline{\Omega}_{t}\rightarrow\mathbb{R}$ be the unique map that
agrees with $H_{t}\circ J_{t}^{-1}$ on $\partial\Omega_{t}$ and maps vertical
segments in $\overline{\Omega}_{t}$ to points in $\mathbb{R}.$
\end{definition}

The map $Q_{t}$ is visualized in Figure \ref{qmap.fig}. In the case that
$x_{0}$ is semicircular with variance $s,$ one can easily use the computations
in Section \ref{sect:ellipticLaw} to show that
\begin{equation}
Q_{t}(a+ib)=\frac{s+t}{s}a. \label{QtSemicircular}%
\end{equation}

In general, we may compute $Q_{t}$ more explicitly as follows. We first map
$a+ib$ to the point $a+ib_{t}(a)$ on $\partial\Omega_{t}.$ Next, we compute
\[
J_{t}^{-1}(a+ib_{t}(a))=a_{0}^{t}(a)+iv_{t}(a_{0}^{t}(a)).
\]
Next, we use the identity $H(J_{t}^{-1}(z))=2J_{t}^{-1}(z)-z$ in
(\ref{twoMapsId}). Finally, we recall that $H_{t}\circ J_{t}^{-1}$ is
real-valued on $\partial\Omega_{t}.$ Thus,%
\begin{align*}
Q_{t}(a+ib)  &  =\operatorname{Re}\{2J_{t}^{-1}[a+ib_{t}(a)]-(a+ib_{t}(a))\}\\
&  =2a_{0}^{t}(a)-a.
\end{align*}

\begin{theorem}
\label{push.thm}The following results hold.

\begin{enumerate}
\item The push-forward of the Brown measure of $x_{0}+c_{t}$ under the map
$U_{t}$ in Theorem \ref{thm:lambda0tolambda} is the Brown measure of
$x_{0}+i\sigma_{t}$.

\item The push-forward of the Brown measure of $x_{0}+i\sigma_{t}$ under the
map $Q_{t}$ is the law of $x_{0}+\sigma_{t}$.
\end{enumerate}
\end{theorem}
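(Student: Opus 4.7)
The proof naturally splits: Part 1 requires identifying densities and is the substantive step, while Part 2 follows from Part 1 by composition with a map already constructed in \cite{HZ}.

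For Part 1, the plan is to exploit the explicit density formulas for both Brown measures. By \cite{HZ}, $\mathrm{Brown}(x_{0}+c_{t})$ is supported in $\overline{\Lambda}_{t}$ and is absolutely continuous on $\Lambda_{t}$ with a density $\rho_{t}(a_{0}+ib_{0})$ that is constant along vertical segments and expressible in terms of $v_{t}$ and the derivatives of the associated subordination data. By Theorem \ref{thm:lambda0tolambda}, the map $U_{t}(a_{0}+ib_{0})=a_{t}(a_{0})+2ib_{0}$ is a homeomorphism from $\overline{\Lambda}_{t}$ onto $\overline{\Omega}_{t}$, smooth on the interior, with Jacobian determinant $2\,a_{t}'(a_{0})$. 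Thus $(U_{t})_{\ast}\rho_{t}$ has density
\[
\frac{\rho_{t}(a_{0}^{t}(a)+ib/2)}{2\,a_{t}'(a_{0}^{t}(a))}
\]
at $\lambda=a+ib\in\Omega_{t}$, using that $a_{0}^{t}$ is the inverse of $a_{t}$ (Summary \ref{surject.summary}). Setting this equal to $w_{t}(a+ib)=\frac{1}{2\pi t}((a_{0}^{t})'(a)-1/2)$ from Theorem \ref{thm:main}, and substituting $a_{t}'(a_{0}^{t}(a))=1/(a_{0}^{t})'(a)$, produces a candidate formula for $\rho_{t}$ that one verifies matches the density of $\mathrm{Brown}(x_{0}+c_{t})$ from \cite{HZ}. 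The boundary is handled by noting that $\partial\Omega_{t}$ is the graph of $\pm b_{t}$ (hence two-dimensional Lebesgue null) and that both Brown measures are absolutely continuous on the interiors of their supports, so the boundary carries no mass on either side.

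For Part 2, the plan is to factor $Q_{t}$ through the map already constructed in \cite{HZ}. In \cite[Prop.~3.14]{HZ}, a map $\Psi_{t}:\overline{\Lambda}_{t}\rightarrow\mathbb{R}$ is produced that agrees with $H_{t}$ on $\partial\Lambda_{t}$, collapses each vertical segment in $\overline{\Lambda}_{t}$ to a single point, and whose push-forward sends $\mathrm{Brown}(x_{0}+c_{t})$ to the law of $x_{0}+\sigma_{t}$. The composition $\Psi_{t}\circ U_{t}^{-1}:\overline{\Omega}_{t}\rightarrow\mathbb{R}$ agrees with $H_{t}\circ J_{t}^{-1}$ on $\partial\Omega_{t}$ (since $U_{t}=J_{t}$ on $\partial\Lambda_{t}$) and collapses vertical segments in $\overline{\Omega}_{t}$ to points (since $U_{t}^{-1}$ carries vertical segments to vertical segments). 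The uniqueness clause in the definition of $Q_{t}$ then forces $Q_{t}=\Psi_{t}\circ U_{t}^{-1}$, and Part 1 together with \cite[Prop.~3.14]{HZ} yields
\[
(Q_{t})_{\ast}\mathrm{Brown}(x_{0}+i\sigma_{t})=(\Psi_{t})_{\ast}\mathrm{Brown}(x_{0}+c_{t})=\mathrm{law}(x_{0}+\sigma_{t}).
\]

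The main obstacle is the density verification in Part 1: the formula for $\rho_{t}$ from \cite{HZ} is written in coordinates adapted to $\Lambda_{t}$, while $w_{t}$ is written through $a_{0}^{t}$ on $\Omega_{t}$, and reconciling them requires the chain rule together with the defining relations (\ref{p0EqIntro})--(\ref{p1EqIntro}) for $v_{t}$ and $a_{0}^{t}$. Because both densities are only known to be absolutely continuous, a pointwise identification of densities, rather than an analytic-continuation or moment-matching argument, appears to be the cleanest route.
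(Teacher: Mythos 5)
Your route is essentially the paper's: Part 1 is done by pushing the explicit density of $\mathrm{Brown}(x_{0}+c_{t})$ from \cite{HZ} forward under $U_{t}(a_{0}+ib_{0})=a_{t}(a_{0})+2ib_{0}$ (Jacobian $2a_{t}'(a_{0})$) and matching it against $w_{t}$ on $\Omega_{t}$, and Part 2 is exactly the paper's factorization $Q_{t}=\Psi_{t}\circ U_{t}^{-1}$ forced by the uniqueness clause, followed by the \cite{HZ} push-forward result for $\Psi_{t}$. The density reconciliation you leave as "to be verified" is a short chain-rule computation and does work out: the \cite{HZ} density $\frac{1}{\pi t}\bigl(1-\frac{1}{2}\frac{da_{t}}{da_{0}}\bigr)\,da_{0}\,db_{0}$ becomes $\frac{1}{2\pi t}\bigl(\frac{da_{0}^{t}}{da}-\frac{1}{2}\bigr)\,da\,db$ under $a=a_{t}(a_{0})$, $b=2b_{0}$, using $a_{t}'(a_{0}^{t}(a))=1/(a_{0}^{t})'(a)$.

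The one genuine gap is your treatment of the boundary. Absolute continuity of $\mathrm{Brown}(x_{0}+i\sigma_{t})$ on the \emph{open} set $\Omega_{t}$ does not exclude a singular piece sitting on $\partial\Omega_{t}$, and the statement that $\partial\Omega_{t}$ carries no Brown mass is not available as an input here: in the paper, the proof of Theorem \ref{thm:main} explicitly defers the claim that $\Omega_{t}$ has full measure to Theorem \ref{push.thm}, so invoking it at this point would be circular. The repair is mass accounting rather than regularity. Since \cite{HZ} shows $\rho_{t}=\mathrm{Brown}(x_{0}+c_{t})$ is a probability measure concentrated on the open set $\Lambda_{t}$, and $U_{t}(\Lambda_{t})=\Omega_{t}$, the push-forward $(U_{t})_{\ast}\rho_{t}$ is concentrated on $\Omega_{t}$; the density identity on $\Omega_{t}$ (Corollary \ref{cor:stDeri} and Theorem \ref{thm:main} for the density, which are independent of the full-measure claim) then gives $\mathrm{Brown}(x_{0}+i\sigma_{t})(\Omega_{t})=\int_{\Omega_{t}}w_{t}\,da\,db=\rho_{t}(\Lambda_{t})=1$. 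Because $\mathrm{Brown}(x_{0}+i\sigma_{t})$ is itself a probability measure supported in $\overline{\Omega}_{t}$ (Theorem \ref{thm:outside}), it therefore assigns no mass to $\partial\Omega_{t}$ and coincides with $(U_{t})_{\ast}\rho_{t}$ everywhere. With that substitution your argument is complete and agrees with the paper's proof, including Part 2, which then goes through verbatim.
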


\begin{proof}
By Theorem 3.9 in \cite{HZ}, the Brown measure $\rho_{t}$ of $x_{0}+c_{t}$ can
be written as
\begin{align*}
d\rho_{t}  &  =\frac{1}{\pi t}\left(  1-\frac{t}{2}\frac{d}{da_{0}}%
\int_{\mathbb{R}}\frac{x\,d\mu(x)}{(a_{0}-x)^{2}+v_{t}(a_{0})^{2}}\right)
~da_{0}\,db_{0}\\
&  =\frac{1}{\pi t}\left(  1-\frac{1}{2}\frac{da_{t}}{da_{0}}\right)
~da_{0}\,db_{0}%
\end{align*}
for $a_{0}+ib_{0}\in\Lambda_{t}$. Now, under the map $U_{t},$ we have
$a=a_{t}(a_{0})$ and $b=2b_{0}.$ Thus,
\begin{align*}
d\rho_{t}  &  =\frac{1}{\pi t}\left(  1-\frac{1}{2}\frac{da_{t}}{da_{0}%
}\right)  ~\frac{da_{0}^{t}}{da}da\,\frac{db}{2}\\
&  =\frac{1}{2\pi t}\left(  \frac{da_{0}^{t}}{da}-\frac{1}{2}\right)  ~da~db
\end{align*}
for $a+ib\in\Omega_{t}$. This last expression in the formula for the
restriction of the Brown measure to $\Omega_{t}.$

Since $\rho_{t}$ is a probability measure on $\Lambda_{t},$ we find that the
Brown measure of $x_{0}+i\sigma_{t}$ assigns mass 1 to $\Omega_{t}$, as noted
in the proof of Theorem \ref{thm:main}. Thus, there is no mass of
$\mathrm{Brown}(x_{0}+i\sigma_{t})$ anywhere else and the pushforward of
$\rho_{t}$ under $U_{t}$ is precisely $\mathrm{Brown}(x_{0}+i\sigma_{t}).$

To prove Point 2, we consider the unique map $\Psi_{t}:\overline{\Lambda}%
_{t}\rightarrow\mathbb{R}$ that agrees with $H_{t}$ on $\partial\Lambda_{t}$
and is constant along vertical segments in $\Lambda_{t}.$ Then $Q_{t}=\Psi
_{t}\circ U_{t}^{-1}$. (Both $Q_{t}$ and $H_{t}\circ U_{t}^{-1}$ agree with
$\Psi_{t}\circ J_{t}^{-1}$ on $\partial\Omega_{t}$ and are constant along
vertical segments inside $\overline{\Omega}_{t}.$) By Point 1, the
push-forward of the Brown measure of $x_{0}+i\sigma_{t}$ under $U_{t}^{-1}$ is
the Brown measure $\rho_{t}$ of $x_{0}+c_{t}$. By Theorem 3.13 of \cite{HZ},
the push-forward of the Brown measure $\rho_{t}$ by $\Psi_{t}$ of $x_{0}%
+c_{t}$ is the law of $x_{0}+\sigma_{t}$ and Point 2 follows.
\end{proof}

\section{The method of Jarosz and Nowak\label{jn.sec}}

\subsection{The formula for the Brown measure}

We now describe a different approach to computing the Brown measure of
$x_{0}+i\sigma_{t},$ developed by Jarosz and Nowak in the physics literature
\cite{JN1,JN2}. As discussed in the introduction, the method is not rigorous
as written, but could conceivably be made rigorous using the general framework
developed by Belinschi, Mai, and Speicher in \cite{BMS}. (See also related
results in \cite{BSS}.) We emphasize, however, that (so far as we know) no
explicit computation of the case of $x_{0}+i\sigma_{t}$ has been made using
the framework in \cite{BMS}.

Jarosz and Nowak work with an operator of the form $H_{1}+iH_{2},$ where
$H_{1}$ and $H_{2}$ are arbitrary freely independent elements. Then on p.
10118 of \cite{JN2}, they present an algorithm by which the \textquotedblleft
nonholomorphic Green's function\textquotedblright\ of $H_{1}+iH_{2}$ may be
computed. In the notation of this paper, the nonholomorphic Green's function
is the function $\partial s_{t}/\partial\lambda,$ so that the Brown measure
may be computed by taking the $\bar{\lambda}$-derivative:%
\[
\frac{1}{\pi}\frac{\partial}{\partial\bar{\lambda}}\frac{\partial s_{t}%
}{\partial\lambda}=\frac{1}{4\pi}\Delta_{\lambda}s_{t}(\lambda).
\]

Examples are presented in Section 6.1 of \cite{JN1} in which $H_{2}$ is
semicircular and $H_{2}$ has various different distributions. We now work out
their algorithm in detail in the case that $H_{1}=x_{0}$ is an
\textit{arbitrary} self-adjoint element and $H_{2}=\sigma_{t}.$

We refer to \cite{JN1,JN2} for the framework used in the algorithm, involving
\textquotedblleft quaternionic Green's functions.\textquotedblright\ We
present only the final algorithm for computation, described in Eqs. (75)--(79)
of \cite{JN2}, and we specialize to the case $H_{1}=x_{0}$ and $H_{2}%
=\sigma_{t}.$ The algorithm, adapted to our notation, is as follows. We fix a
complex number $\lambda=a+ib.$ Then we introduce three unknown quantities,
complex numbers $g$ and $g^{\prime}$ and a real number $m.$ These are supposed
to satisfy three equations:%
\begin{align}
B_{x_{0}}(g)  &  =a+\frac{m}{g}\label{algorithm1}\\
B_{\sigma_{t}}(g^{\prime})  &  =b+\frac{1-m}{g^{\prime}}\label{algorithm2}\\
\left\vert g\right\vert  &  =\left\vert g^{\prime}\right\vert ,
\label{algorithm3}%
\end{align}
where $B_{x_{0}}$ and $B_{\sigma_{t}}$ are the \textquotedblleft Blue's
functions,\textquotedblright\ that is, the inverse functions of the Cauchy
transforms of $x_{0}$ and $\sigma_{t},$ respectively. We are supposed to solve
these equations for $g$ and $g^{\prime}$ as functions of $a$ and $b$. Once
this is done, we have%
\begin{equation}
\frac{\partial s_{t}}{\partial\lambda}(a,b)=\operatorname{Re}%
g-i\operatorname{Re}g^{\prime}. \label{algorithm4}%
\end{equation}
Since $\partial/\partial\lambda=(\partial/\partial a-i~\partial/\partial
b)/2,$ (\ref{algorithm4}) may be written equivalently as%
\begin{align}
\frac{\partial s_{t}}{\partial a}(a,b)  &  =2\operatorname{Re}g;\label{dSda}\\
\frac{\partial s_{t}}{\partial b}  &  =2\operatorname{Re}g^{\prime}.
\label{dSdb}%
\end{align}
That is to say, the real parts of $g$ and $g^{\prime}$ determine the
derivatives of $s_{t}$ with respect to $a$ and $b,$ respectively.

\begin{proposition}
\label{JNmethod1.prop}The Jarosz--Nowak method when applied to $x_{0}%
+i\sigma_{t}$ gives the following result. We try to solve the equation%
\[
g=G_{x_{0}}(a+t\bar{g})
\]
for $g$ as a function of $a$ and $t,$ with the solution denoted $g_{t}(a).$
Then, inside the support of the Brown measure, its density $\rho_{t}$ is a
function of $a$ and $t$ only, namely%
\begin{equation}
\rho_{t}(a)=\frac{1}{4\pi}\left(  \frac{1}{t}+2\frac{d}{da}\operatorname{Re}%
g_{t}(a)\right)  . \label{rhoFormula}%
\end{equation}

\end{proposition}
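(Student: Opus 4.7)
The plan is to unfold the Jarosz--Nowak algorithm explicitly in the case $H_2 = \sigma_t$, extracting the simple equation $g = G_{x_0}(a + t\bar g)$ from the three-equation system. First I would compute the Blue function of $\sigma_t$. Since the $R$-transform of a free semicircular element of variance $t$ is $R_{\sigma_t}(g) = tg$, its Blue function is $B_{\sigma_t}(g') = tg' + 1/g'$. Feeding this into equation (\ref{algorithm2}) and clearing fractions yields
\[
m = bg' - t(g')^2.
\]

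Next, I would use the reality of $m$ to constrain $g'$. Writing $g' = x + iy$, the imaginary part of the right-hand side is $y(b - 2tx)$, which must vanish. Inside the support of the Brown measure we expect $\operatorname{Im}(g') \neq 0$, since otherwise the Jarosz--Nowak Green's function is locally holomorphic in $\lambda$, as in the $\varepsilon$-free discussion of Section \ref{jnpwz.sec}; I would therefore select the branch $\operatorname{Re}(g') = b/(2t)$. Substituting this back and simplifying gives $m = t(x^2 + y^2) = t|g'|^2$, and the constraint $|g| = |g'|$ from (\ref{algorithm3}) upgrades this to $m = t|g|^2$. Plugging $m/g = t\bar g$ into (\ref{algorithm1}) then collapses the three-equation system to the single equation $g = G_{x_0}(a + t\bar g)$, which is the defining equation for $g_t(a)$. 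Because $b$ has dropped out entirely, $g_t$ is genuinely a function of $a$ alone, matching the $b$-independence of the Brown density established in Theorem \ref{thm:main}.

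For the density I would combine (\ref{dSda}) with the identity $\operatorname{Re}(g') = b/(2t)$ substituted into (\ref{dSdb}). These give $\partial s_t/\partial a = 2\operatorname{Re} g_t(a)$, hence $\partial^2 s_t/\partial a^2 = 2\,(d/da)\operatorname{Re} g_t(a)$, and $\partial s_t/\partial b = b/t$, hence $\partial^2 s_t/\partial b^2 = 1/t$. Adding and dividing by $4\pi$ produces (\ref{rhoFormula}). The main obstacle in this plan is interpretive rather than algebraic: the Jarosz--Nowak recipe itself does not prescribe which of the branches $y = 0$ or $\operatorname{Re}(g') = b/(2t)$ is appropriate at a given $\lambda$, so the identification of the latter with $\Omega_t$ must be verified a posteriori, by checking that the two branches agree on $\partial\Omega_t$ and that the resulting density matches (\ref{eq:density}) from Theorem \ref{thm:main}.
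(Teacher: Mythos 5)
Your proposal is correct and follows essentially the same route as the paper: compute $B_{\sigma_t}(g')=1/g'+tg'$, use equation (\ref{algorithm2}) to force $\operatorname{Re}g'=b/(2t)$ and $m=t|g'|^2$ (you derive this from the reality of $m$, the paper from the complex roots of the quadratic $t(g')^2-bg'+m=0$ --- the same computation under the same implicit assumption $\operatorname{Im}g'\neq 0$), then use (\ref{algorithm3}) to reduce (\ref{algorithm1}) to $g=G_{x_0}(a+t\bar g)$, and finally combine (\ref{dSda}) and (\ref{dSdb}) to get (\ref{rhoFormula}). Your closing remark about the branch choice being verified a posteriori against Theorem \ref{thm:main} is consistent with the paper's acknowledgment that the Jarosz--Nowak method is heuristic, and the agreement is exactly what Proposition \ref{JNmethod2.prop} establishes.
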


\begin{proof}
It is known that the Blue's function of $\sigma_{t}$ is given by
$B_{\sigma_{t}}(g^{\prime})=1/g^{\prime}+tg^{\prime}.$ (This statement is
equivalent to saying that the $R$-transform of $\sigma_{t}$ is given by
$R(z)=tz,$ as in \cite[Example 3.4.4]{VDN}.) Plugging this expression into
(\ref{algorithm2}) and simplifying, we obtain a quadratic equation:%
\[
t(g^{\prime})^{2}-bg^{\prime}+m=0,
\]
whose roots are%
\begin{equation}
g^{\prime}=\frac{b\pm\sqrt{b^{2}-4mt}}{2t}. \label{roots}%
\end{equation}
Assuming (as Jarosz and Nowak implicitly do) that these roots are complex, we
find that
\begin{equation}
\operatorname{Re}(g^{\prime})=\frac{b}{2t}. \label{ReRoot}%
\end{equation}
Thus, without even using (\ref{algorithm1}) or (\ref{algorithm3}), we find by
(\ref{dSdb}) that%
\[
\frac{\partial s_{t}}{\partial b}=\frac{b}{t},
\]
which agrees with what we found in Corollary \ref{cor:stDeri}.

Meanwhile, assuming still that the roots in (\ref{roots}) are complex, we find
that
\begin{align*}
\left\vert g^{\prime}\right\vert ^{2}  &  =\frac{1}{4t^{2}}(b^{2}+4mt-b^{2})\\
&  =\frac{m}{t}.
\end{align*}
Then (\ref{algorithm3}) says that $\left\vert g\right\vert ^{2}=\left\vert
g^{\prime}\right\vert ^{2}=m/t,$ so that $m=t\left\vert g\right\vert ^{2}.$
Thus, after replacing $m$ by $t\left\vert g\right\vert ^{2},$
(\ref{algorithm1}) becomes%
\[
B_{x_{0}}(g)=a+t\bar{g}.
\]
Since $B_{x_{0}}$ is the inverse function to $G_{x_{0}},$ this equation may be
rewritten as%
\begin{equation}
g=G_{x_{0}}(a+t\bar{g}). \label{EqnForG}%
\end{equation}
We hope that this equation will implicitly determine the complex number $g$ as
a function of $a$ (and $t$). We therefore write $g$ as $g_{t}(a).$

We then substitute the expression for $g$ into (\ref{dSda}), giving%
\[
\frac{\partial s_{t}}{\partial a}=2\operatorname{Re}g_{t}(a).
\]
The density $\rho_{t}$ of the Brown measure is then computed as
\begin{align*}
\rho_{t}(a,b)  &  =\frac{1}{4\pi}\left(  \frac{\partial^{2}s_{t}}{\partial
b^{2}}+\frac{\partial^{2}s_{t}}{\partial a^{2}}\right) \\
&  =\frac{1}{4\pi}\left(  \frac{1}{t}+2\frac{d}{da}\operatorname{Re}%
g_{t}(a)\right)  ,
\end{align*}
as claimed.
\end{proof}

\begin{proposition}
\label{JNmethod2.prop}In the Jarosz--Nowak method, the quantity
$\operatorname{Re}g_{t}(a)$ may be computed as%
\[
\operatorname{Re}g_{t}(a)=\frac{1}{t}(a_{0}^{t}(a)-a),
\]
where the function $a_{0}^{t}$ is as in Summary \ref{surject.summary}. Thus,
the formula (\ref{rhoFormula}) for the Brown measure in the Jarosz--Nowak
method agrees with what we found in Theorem \ref{thm:main}.
\end{proposition}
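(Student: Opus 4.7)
The strategy is to decompose $g_t(a)$ into real and imaginary parts, expand the defining equation $g = G_{x_0}(a + t\bar g)$ using the explicit formula for the Cauchy transform, and match the resulting conditions with the defining equations of $a_0^t(a)$ and $v_t$ from Summary \ref{surject.summary} and Proposition \ref{vt.prop}.

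First I would write $g = \alpha + i\beta$ with $\alpha,\beta\in\mathbb{R}$, so that $a + t\bar g = (a + t\alpha) - it\beta$. Setting $a_0 := a + t\alpha$ and expanding $G_{x_0}(z) = \int_{\mathbb{R}} (z-x)^{-1}\,d\mu(x)$ with $z = a_0 - it\beta$, the real and imaginary parts of the identity $g = G_{x_0}(a + t\bar g)$ become
\begin{align*}
\alpha &= \int_{\mathbb{R}} \frac{a_0 - x}{(a_0 - x)^2 + t^2\beta^2}\, d\mu(x),\\
\beta &= t\beta \int_{\mathbb{R}} \frac{1}{(a_0 - x)^2 + t^2\beta^2}\, d\mu(x).
\end{align*}
Inside $\Omega_t$ the relevant branch must satisfy $\beta \neq 0$ (otherwise the Brown measure would have no density, contradicting Theorem \ref{thm:main}), so dividing the second equation by $\beta$ gives
\[
\int_{\mathbb{R}} \frac{d\mu(x)}{(a_0 - x)^2 + (t|\beta|)^2} = \frac{1}{t},
\]
which by Proposition \ref{vt.prop} forces $t|\beta| = v_t(a_0)$. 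Substituting this back into the real-part equation, using the preceding identity together with $\alpha = (a_0 - a)/t$, and rearranging yields
\[
\int_{\mathbb{R}} \frac{x\, d\mu(x)}{(a_0 - x)^2 + v_t(a_0)^2} = \frac{a}{t},
\]
which is exactly the second equation defining $a_0^t(a)$ in Summary \ref{surject.summary}. The uniqueness of the pair $(v_t(a_0), a_0)$ established in Section \ref{sect:surjectivity} then forces $a_0 = a_0^t(a)$, and hence $\operatorname{Re} g_t(a) = \alpha = (a_0^t(a) - a)/t$.

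For the agreement of densities, substituting this expression into the formula (\ref{rhoFormula}) produces
\[
\rho_t(a) = \frac{1}{4\pi}\left(\frac{1}{t} + \frac{2}{t}\left(\frac{d a_0^t(a)}{da} - 1\right)\right) = \frac{1}{2\pi t}\left(\frac{d a_0^t(a)}{da} - \frac{1}{2}\right),
\]
which matches the density obtained in Theorem \ref{thm:main}. The main subtlety is the branch selection implicit in the Jarosz--Nowak algorithm: their assumption that the roots in (\ref{roots}) are complex corresponds to insisting $\beta \neq 0$, and on the complex root for $g$ this translates into choosing $a_0 = a_0^t(a)$ among possibly several solutions of the pair of integral equations; the uniqueness result from Section \ref{sect:surjectivity} is what ensures this choice is unambiguous. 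Points with $\beta = 0$ would correspond to $a \notin \Omega_t \cap \mathbb{R}$, where the Brown measure vanishes and no density is being computed.
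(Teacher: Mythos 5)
Your proposal is correct and follows essentially the same route as the paper's proof: you take real and imaginary parts of $g=G_{x_{0}}(a+t\bar{g})$, use the imaginary part (with $\operatorname{Im}g\neq0$) to identify $t\left\vert \operatorname{Im}g\right\vert =v_{t}(a_{0})$ with $a_{0}=a+t\operatorname{Re}g$, and then use the real part to recover the defining equation for $a_{0}^{t}(a)$, exactly as in the paper. The explicit appeal to the uniqueness of the pair $(v,a_{0})$ and the final substitution into (\ref{rhoFormula}) are fine additions that the paper leaves implicit.
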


\begin{proof}
The imaginary part of the equation (\ref{EqnForG}) for $g$ says that%
\begin{align*}
\operatorname{Im}g  &  =\operatorname{Im}\int_{\mathbb{R}}\frac{d\mu
(x)}{a+t\bar{g}-x}\\
&  =t\operatorname{Im}g\int_{\mathbb{R}}\frac{d\mu(x)}{(a+t\operatorname{Re}%
g-x)^{2}+t^{2}(\operatorname{Im}g)^{2}}.
\end{align*}
Thus, at least when $\operatorname{Im}g\neq0,$ we get%
\begin{equation}
\int\frac{d\mu(x)}{(a+t\operatorname{Re}g-x)^{2}+t^{2}(\operatorname{Im}%
g)^{2}}=\frac{1}{t}. \label{Pequals}%
\end{equation}
We may now apply the equation (\ref{vtDef}) that defines the function $v_{t}$
with $a$ replaced by $a+t\operatorname{Re}g,$ giving
\begin{equation}
t\operatorname{Im}g=\pm v_{t}(a+t\operatorname{Re}g). \label{tImg}%
\end{equation}

We now look at the real part of (\ref{EqnForG}):%
\begin{align*}
\operatorname{Re}g  &  =\operatorname{Re}\int_{\mathbb{R}}\frac{d\mu
(x)}{a+t\bar{g}-x}\\
&  =(a+t\operatorname{Re}g)\int_{\mathbb{R}}\frac{1}{(a+t\operatorname{Re}%
g-x)^{2}+t^{2}(\operatorname{Im}g)^{2}}~d\mu(x)\\
&  -\int_{\mathbb{R}}\frac{x}{(a+t\operatorname{Re}g-x)^{2}+t^{2}%
(\operatorname{Im}g)^{2}}~d\mu(x).
\end{align*}
Using (\ref{Pequals}) and (\ref{tImg}) this equation simplifies to
\begin{equation}
a=t\int_{\mathbb{R}}\frac{x}{(a+t\operatorname{Re}g-x)^{2}+v_{t}%
(a+t\operatorname{Re}g)^{2}}~d\mu(x). \label{aEq}%
\end{equation}

Now, if we let%
\begin{equation}
a_{0}=a+t\operatorname{Re}g, \label{JNa0}%
\end{equation}
then (\ref{aEq}) is just the equation for $a$ in terms of $a_{0}$ that we
found in our Hamilton--Jacobi analysis (Proposition \ref{solveODEspecial.prop}%
). Thus,%
\[
\operatorname{Re}g=\frac{1}{t}(a_{0}-a),
\]
as claimed.
\end{proof}

\subsection{The support of the Brown measure}

We now examine the condition for the boundary of the support of the Brown
measure, as given in Eq. (80) of \cite{JN2}:%
\begin{equation}
(\operatorname{Re}g)^{2}+(\operatorname{Re}g^{\prime})^{2}=\left\vert
g\right\vert ^{2}. \label{JNbdy}%
\end{equation}

\begin{proposition}
\label{JNmethod3.prop}The condition for a point $a+ib$ to be on the boundary
in the Jarosz--Nowak method is that
\[
b=2v_{t}(a_{0}^{t}(a)).
\]
Such points are precisely the boundary points of our domain $\Omega_{t}.$
\end{proposition}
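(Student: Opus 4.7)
The plan is to substitute the quantities already computed in Propositions \ref{JNmethod1.prop} and \ref{JNmethod2.prop} into the Jarosz--Nowak boundary condition \eqref{JNbdy} and then reconcile the result with the description of $\partial\Omega_t$ obtained in Proposition \ref{Jprops.prop}. The argument is essentially algebraic; nothing new about the PDE or the Hamilton--Jacobi flow is needed.

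First I would rewrite \eqref{JNbdy} using $|g|^2=(\operatorname{Re}g)^2+(\operatorname{Im}g)^2$. Cancelling $(\operatorname{Re}g)^2$ on both sides reduces the condition to
\[
(\operatorname{Re}g')^2=(\operatorname{Im}g)^2,\qquad\text{i.e. } |\operatorname{Re}g'|=|\operatorname{Im}g|.
\]
From the proof of Proposition \ref{JNmethod1.prop}, in particular \eqref{ReRoot}, we have $\operatorname{Re}g'=b/(2t)$, while from \eqref{tImg} combined with Proposition \ref{JNmethod2.prop} (which identifies $a+t\operatorname{Re}g$ with $a_0^t(a)$) we have
\[
t\operatorname{Im}g=\pm v_t(a_0^t(a)).
\]
Substituting these two identities into $|\operatorname{Re}g'|=|\operatorname{Im}g|$ and multiplying by $2t$ gives the claimed equation
\[
|b|=2v_t(a_0^t(a)).
\]

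Next I would identify this set of points with $\partial\Omega_t$. By Summary \ref{surject.summary} (or equivalently by the explicit formula for $a_t$ in Proposition \ref{solveODEspecial.prop}), the function $a_0^t$ is constructed so that $a_t(a_0^t(a))=a$. Combined with Point \ref{graphs.point} of Proposition \ref{Jprops.prop}, which states $b_t(a_t(a_0))=2v_t(a_0)$, this yields
\[
b_t(a)=b_t\bigl(a_t(a_0^t(a))\bigr)=2v_t(a_0^t(a)).
\]
Therefore the Jarosz--Nowak boundary condition $|b|=2v_t(a_0^t(a))$ is precisely $|b|=b_t(a)$, which by Point \ref{OmegaGraph.point} of Proposition \ref{Jprops.prop} is exactly the equation defining $\partial\Omega_t$.

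The only subtlety, which is minor, is the sign ambiguity in \eqref{tImg}: the two choices correspond to the two branches $b=\pm b_t(a)$ of the boundary and to the symmetry of the Brown measure under complex conjugation, so both are legitimate and together account for the full boundary. Apart from this bookkeeping, the proof is a direct algebraic reduction assembling the formulas already proved in Propositions \ref{JNmethod1.prop} and \ref{JNmethod2.prop} and Point \ref{graphs.point} of Proposition \ref{Jprops.prop}.
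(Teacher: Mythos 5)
Your proposal is correct and follows essentially the same route as the paper: cancel $(\operatorname{Re}g)^{2}$ in (\ref{JNbdy}), substitute $\operatorname{Re}g'=b/(2t)$ from (\ref{ReRoot}) and $t\operatorname{Im}g=\pm v_{t}(a_{0}^{t}(a))$ from (\ref{tImg}) and (\ref{JNa0}), and then identify the resulting condition $b=2v_{t}(a_{0}^{t}(a))$ with $\partial\Omega_{t}$ via Point \ref{graphs.point} of Proposition \ref{Jprops.prop}. Your slightly more explicit unwinding of the boundary identification (through $a_{t}(a_{0}^{t}(a))=a$ and Point \ref{OmegaGraph.point}) and your remark on the $\pm$ sign are fine elaborations of the same argument.
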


\begin{proof}
We cancel $(\operatorname{Re}g)^{2}$ from both sides of (\ref{JNbdy}), leaving
us with%
\[
(\operatorname{Re}g^{\prime})^{2}=(\operatorname{Im}g)^{2}.
\]
Now, we have found in (\ref{ReRoot}) that $\operatorname{Re}g^{\prime}=b/(2t)$
and in (\ref{tImg}) that $\operatorname{Im}g=\pm v_{t}(a+t\operatorname{Re}%
g)/t.$ But in (\ref{JNa0}), we have identified $a+t\operatorname{Re}g$ with
$a_{0}^{t}(a).$ Thus, the condition (\ref{JNbdy}) for the boundary reads%
\[
\frac{b}{2t}=\pm\frac{v_{t}(a_{0}^{t}(a))}{t},
\]
or $b=2v_{t}(a_{0}^{t}(a)),$ which is the condition for the boundary of
$\Omega_{t}$ (Point \ref{graphs.point} of Proposition \ref{Jprops.prop}).
\end{proof}

\section{Examples\label{examples.sec}}

In this section, we compute three examples, in which the law of $x_{0}$ is
semicircular, Bernoulli, or uniform. Additional examples, computed by a
different method, were previously worked out by Jarosz and Nowak in
\cite[Section 6.1]{JN1}.

We also mention that we can take $x_{0}$ to have the form $x_{0}=y_{0}%
+\tilde{\sigma}_{s},$ where $\tilde{\sigma}_{s}$ is another semicircular
Brownian motion and $y_{0},$ $\tilde{\sigma}_{s},$ and $\sigma_{t}$ are all
freely independent. Thus, our results allow one to determine the Brown measure
for the sum of the elliptic element $\tilde{\sigma}_{s}+i\sigma_{t}$ and the
freely independent self-adjoint element $y_{0}$. The details of this analysis
will appear elsewhere.

\subsection{The elliptic law\label{sect:ellipticLaw}}

In our first example, the law $\mu$ of $x_{0}$ is a semicircular distribution
with variance $s.$ Then $x_{0}+i\sigma_{t}$ has the form of an
\textbf{elliptic element} $\tilde{\sigma}_{s}+i\sigma_{t}$, where
$\tilde{\sigma}_{\cdot}$ and $\sigma_{\cdot}$ are two freely independent
semicircular Brownian motions. The associated \textquotedblleft elliptical
law,\textquotedblright\ in various forms, has been studied extensively going
back to the work of Girko \cite{GirkoElliptic}. The elliptical case was worked
out by Jarosz and Nowak in \cite[Section 3.6]{JN2}. The Brown measure of
$\tilde{\sigma}_{s}+i\sigma_{t}$ was also computed by Biane and Lehner
\cite[Example 5.3]{BL} by a different method. We include this example as a
simple demonstration of the effectiveness of our method.

\begin{theorem}
\label{thm:EllipticLaw}The Brown measure of $\tilde{\sigma}_{s}+i\sigma_{t}$
is supported in the closure of the ellipse centered at the origin with
semi-axes $2s/\sqrt{s+t}$ and $2t/\sqrt{s+t}$. The density of the Brown
measure is constant
\[
\frac{1}{4\pi}\left(  \frac{1}{s}+\frac{1}{t}\right)
\]
in the domain.
\end{theorem}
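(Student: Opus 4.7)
The plan is to apply Theorem \ref{thm:main} directly with $\mu$ taken to be the semicircular law of variance $s$, using the well-known Cauchy transform
\[
G_{\mu}(z)=\frac{z-\sqrt{z^{2}-4s}}{2s},
\]
with the branch of the square root chosen so that $\operatorname{Im}\sqrt{z^{2}-4s}>0$ for $z$ in the upper half-plane.

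First, I would compute $v_{t}(a_{0})$ using the identity $\int d\mu(x)/|z-x|^{2}=-\operatorname{Im}G_{\mu}(z)/\operatorname{Im}z$, so the defining relation (\ref{vtDef}) becomes $\operatorname{Im}G_{\mu}(a_{0}+iv)=-v/t$. Writing $w=\sqrt{z^{2}-4s}=\operatorname{Re}w+i\operatorname{Im}w$ and using $w^{2}=z^{2}-4s$ with $z=a_{0}+iv_{t}(a_{0})$, a short calculation from $\operatorname{Im}w\cdot\operatorname{Re}w=a_{0}v$ and $\operatorname{Im}w=v(t+2s)/t$ (the latter obtained from $\operatorname{Im}G_{\mu}=-v/t$) yields
\[
\operatorname{Im}w=\frac{t+2s}{t}\,v_{t}(a_{0}),\qquad \operatorname{Re}w=\frac{t}{t+2s}\,a_{0}.
\]

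Next, I would solve for $a_{0}^{t}(a)$ using the second equation of Summary \ref{surject.summary}. Writing $p_{1}=a_{0}p_{0}-\operatorname{Re}G_{\mu}(z)$ with $p_{0}=1/t$ and using $\operatorname{Re}G_{\mu}(z)=(a_{0}-\operatorname{Re}w)/(2s)$, the equation $p_{1}=a/t$ collapses, after substituting $\operatorname{Re}w=a_{0}t/(t+2s)$, to the remarkably linear relation
\[
a_{0}^{t}(a)=\frac{t+2s}{2s}\,a.
\]
Consequently $da_{0}^{t}/da=(t+2s)/(2s)$ is constant, and the density formula in Theorem \ref{thm:main} immediately gives
\[
w_{t}(\lambda)=\frac{1}{2\pi t}\Bigl(\frac{t+2s}{2s}-\frac{1}{2}\Bigr)=\frac{s+t}{4\pi st}=\frac{1}{4\pi}\Bigl(\frac{1}{s}+\frac{1}{t}\Bigr),
\]
independent of both $a$ and $b$, as claimed.

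Finally, I would identify the domain $\Omega_{t}$. By Point \ref{graphs.point} of Proposition \ref{Jprops.prop}, $b_{t}(a)=2v_{t}(a_{0}^{t}(a))$, so the ellipse will appear from the relation between $v_{t}(a_{0})$ and $a_{0}$. Using $(\operatorname{Re}w)^{2}-(\operatorname{Im}w)^{2}=a_{0}^{2}-v_{t}^{2}-4s$ and substituting the explicit values of $\operatorname{Re}w,\operatorname{Im}w$ above, one obtains the quadratic constraint
\[
\Bigl(\frac{a_{0}t}{t+2s}\Bigr)^{2}+v_{t}(a_{0})^{2}=\frac{t^{2}}{s+t}.
\]
Substituting $a_{0}=(t+2s)a/(2s)$ and $v_{t}=b/2$ converts this into
\[
\frac{a^{2}}{\bigl(2s/\sqrt{s+t}\bigr)^{2}}+\frac{b^{2}}{\bigl(2t/\sqrt{s+t}\bigr)^{2}}=1,
\]
which is precisely the ellipse claimed in the theorem. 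The only real obstacle is the algebra of separating real and imaginary parts of $\sqrt{z^{2}-4s}$, but since everything reduces to a single quadratic equation, the computation is elementary; one could alternatively shortcut it via the Jarosz--Nowak formulation in Proposition \ref{JNmethod2.prop}, which directly gives the quadratic $sg^{2}-(a+t\bar{g})g+1=0$ for $g=g_{t}(a)$.
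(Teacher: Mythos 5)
Your computation is correct and reaches all three conclusions (the ellipse, the linear relation $a_{0}^{t}(a)=\frac{2s+t}{2s}a$, and the constant density $\frac{1}{4\pi}(\frac{1}{s}+\frac{1}{t})$) by a route that differs in its mechanics from the paper's. The paper also applies Theorem \ref{thm:main}, but it gets the geometry by explicitly inverting Biane's map: it computes $H_{t}^{-1}(z)=\frac{(2s+t)z+t\sqrt{z^{2}-4(s+t)}}{2(s+t)}$, parametrizes $\partial\Lambda_{t}$ by the real variable $q=H_{t}(a_{0}+iv_{t}(a_{0}))\in[-2\sqrt{s+t},2\sqrt{s+t}]$, pushes that curve forward by $J_{t}(z)=2z-H_{t}(z)$ to parametrize $\partial\Omega_{t}$, and then reads off $a=\frac{sq}{s+t}$ and $a_{0}^{t}(a)=\frac{(2s+t)q}{2(s+t)}$ from the two parametrizations, so that $da_{0}^{t}/da$ falls out immediately; as a byproduct it also exhibits $\Lambda_{t}$ itself as an ellipse with semi-axes $\frac{2s+t}{\sqrt{s+t}}$ and $\frac{t}{\sqrt{s+t}}$. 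You instead solve the defining equations (\ref{p0EqIntro})--(\ref{p1EqIntro}) head-on: the identity $\operatorname{Im}G_{\mu}(a_{0}+iv)=-v/t$ and $p_{1}=a_{0}p_{0}-\operatorname{Re}G_{\mu}$ give $\operatorname{Im}w=\frac{t+2s}{t}v$, $\operatorname{Re}w=\frac{t}{t+2s}a_{0}$, hence the linear relation for $a_{0}^{t}$, and the constraint $(\operatorname{Re}w)^{2}-(\operatorname{Im}w)^{2}=a_{0}^{2}-v^{2}-4s$ yields the ellipse; I checked the algebra and it is consistent (your constraint $\bigl(\frac{a_{0}t}{t+2s}\bigr)^{2}+v^{2}=\frac{t^{2}}{s+t}$ is exactly the paper's $\Lambda_{t}$-ellipse in disguise). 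The paper's route buys a clean one-parameter description of both boundary curves and reuses the structural results about $H_{t}$ and $J_{t}$ already established; your route is a self-contained real/imaginary-part computation that never needs $H_{t}^{-1}$ explicitly, at the cost of slightly more branch-of-square-root bookkeeping, and your closing remark about the Jarosz--Nowak quadratic $sg^{2}-(a+t\bar{g})g+1=0$ is a legitimate alternative shortcut consistent with Proposition \ref{JNmethod2.prop}.
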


We apply our result in this paper with $x_{0}=\tilde{\sigma}_{s}$. In the next
proposition we compute $\Lambda_{t}$.

\begin{proposition}
\label{prop:HtElliptic} We can parametrize the upper boundary curve of
$\Lambda_{t}$ by
\begin{equation}
a_{0}+iv_{t}(a_{0})=\frac{(2s+t)q+it\sqrt{4(s+t)-q^{2}}}{2(s+t)},\quad
q\in\lbrack-2\sqrt{s+t},2\sqrt{s+t}]; \label{ellipseUpper}%
\end{equation}
therefore, $\Lambda_{t}$ is the ellipse centered at the origin with semi-axes
$\frac{2s+t}{\sqrt{s+t}}$ and $\frac{t}{\sqrt{s+t}}$.
\end{proposition}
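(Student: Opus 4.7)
The plan is to exploit Biane's Theorem \ref{BianeH.thm}, which tells us that the map $H_t(z) = z + t G_{x_0}(z)$ sends the boundary curve $\partial\Delta_t = \{a_0 + i v_t(a_0) : a_0 \in \mathbb{R}\}$ homeomorphically onto $\mathbb{R}$. Thus it suffices to parametrize the upper boundary of $\Lambda_t$ by solving $H_t(z) = q$ for each $q \in \mathbb{R}$ and selecting the solution in $\overline{\Delta}_t$ (i.e.\ with $\operatorname{Im} z \geq 0$). For $x_0 = \tilde{\sigma}_s$ the Cauchy transform is
\[
G_{\tilde{\sigma}_s}(z) = \frac{z - \sqrt{z^2-4s}}{2s},
\]
with the branch of the square root that is analytic off $[-2\sqrt{s}, 2\sqrt{s}]$ and vanishes at infinity (equivalently, the branch that sends the upper half-plane into the lower). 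Substituting into $H_t$ gives
\[
H_t(z) = \frac{(2s+t)z - t\sqrt{z^2-4s}}{2s}.
\]

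Next, I would set $H_t(z) = q$, move the radical to one side, and square to obtain the quadratic
\[
(s+t)\,z^2 - (2s+t)\,q\,z + (s q^2 + t^2) = 0.
\]
The discriminant collapses via the identity $(2s+t)^2 - 4s(s+t) = t^2$ to
\[
q^2(2s+t)^2 - 4(s+t)(sq^2 + t^2) = t^2\bigl(q^2 - 4(s+t)\bigr),
\]
which is non-positive precisely when $q \in [-2\sqrt{s+t},\, 2\sqrt{s+t}]$. For such $q$ the roots are
\[
z = \frac{(2s+t)q \pm i t\sqrt{4(s+t) - q^2}}{2(s+t)},
\]
and the $+$ branch lies in $\overline{\Delta}_t$. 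Reading off real and imaginary parts gives exactly the claimed parametrization (\ref{ellipseUpper}), with $a_0 = (2s+t)q/(2(s+t))$ and $v_t(a_0) = t\sqrt{4(s+t)-q^2}/(2(s+t))$.

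Finally, writing $q = 2\sqrt{s+t}\cos\theta$ turns the parametrization into
\[
a_0 = \tfrac{2s+t}{\sqrt{s+t}}\cos\theta, \qquad v_t(a_0) = \tfrac{t}{\sqrt{s+t}}\sin\theta,
\]
which shows that the upper boundary of $\Lambda_t$ is the upper half of the ellipse with semi-axes $(2s+t)/\sqrt{s+t}$ and $t/\sqrt{s+t}$. The lower half comes from complex conjugation (since both $\mu$ and $v_t$ are real-symmetric in the relevant sense), yielding the full ellipse. The only technical point requiring care is the branch selection: I must check that the $+$ root actually satisfies the \emph{unsquared} equation $H_t(z) = q$ and not just its squared version, which follows from the fact that the branch chosen for $G_{\tilde{\sigma}_s}$ makes $\operatorname{Im} G_{\tilde{\sigma}_s}(z) < 0$ for $\operatorname{Im} z > 0$; squaring introduces the spurious branch corresponding to the opposite sign of the radical. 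This is the main (and only) obstacle, and it is handled by a direct sign-check.
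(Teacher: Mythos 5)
Your proposal is correct and follows essentially the same route as the paper: both invert Biane's map $H_t(z)=z+tG_{\tilde\sigma_s}(z)$ and evaluate the inverse at real points $q$ with $|q|\le 2\sqrt{s+t}$, the only difference being that the paper simply quotes the explicit formula for $H_t^{-1}$ while you derive it by solving the quadratic and checking the branch. The branch/sign issue you flag is indeed the only delicate point, and it is settled exactly as you indicate (or, equivalently, by noting that Biane's theorem forces the solution in $\overline{\Delta}_t$ to be the root with nonnegative imaginary part).
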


\begin{proof}
Recall that $\Delta_{t}$ denotes the region in the upper half plane above the
graph of $v_{t}$, so that $\overline{\Delta}_{t}$ is the region on or above
the graph of $v_{t}.$ Recall also that Biane \cite{BianeConvolution} has shown
that the function $H_{t}$ in (\ref{htDef}) maps $\overline{\Delta}_{t}$
injectively onto the closed upper half plane.

In the case at hand, the Cauchy transform of $\tilde{\sigma}_{s}$ is
$G_{\tilde{\sigma}_{s}}(z)=\left(  z-\sqrt{z^{2}-4s}\right)  /(2s)$. We can
then compute the function $H_{t}$ in (\ref{htDef}) as
\[
H_{t}(z)=z+tG_{\sigma_{s}}(z)=z+t\left(  \frac{z-\sqrt{z^{2}-4s}}{2s}\right)
,\quad z\in\overline{\Delta}_{t}.
\]
The inverse map $H_{t}^{-1}$ is then easily computed as%
\begin{equation}
H_{t}^{-1}(z)=\frac{(2s+t)z+t\sqrt{z^{2}-4(s+t)}}{2(s+t)},\quad
\operatorname{Im}z\geq0. \label{HtInverse}%
\end{equation}

The part of the graph of $v_{t}$ where $v_{t}>0$ comes from the values of
$H_{t}^{-1}$ on the real axis having nonzero imaginary part, that is, for real
numbers $q$ with $\left\vert q\right\vert <2\sqrt{s+t}.$ Plugging these
numbers into (\ref{HtInverse}) gives the claimed form (\ref{ellipseUpper}).
\end{proof}

\begin{proposition}
\label{prop:OmegatEllipse} The boundary curve of $\Omega_{t}$ can be
parametrized by
\[
a+ib_{t}(a)=\frac{sq+it\sqrt{4(s+t)-q^{2}}}{s+t},\quad q\in\lbrack-2\sqrt
{s+t},2\sqrt{s+t}].
\]
Consequently, $\Omega_{t}$ is an ellipse centered at the origin with semi-axes
$\frac{2s}{\sqrt{s+t}}$ and $\frac{2t}{\sqrt{s+t}}$.
\end{proposition}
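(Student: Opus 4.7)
The plan is to leverage the preceding proposition and the identity $J_t(z)=2z-H_t(z)$, which follows immediately from comparing the definitions (\ref{htDef}) and (\ref{jtDef}). By Point \ref{graphs.point} of Proposition \ref{Jprops.prop}, the upper boundary curve of $\Omega_t$ is precisely the image of the upper boundary curve of $\Lambda_t$ under $J_t$. Since the parametrization of the latter is already in hand from Proposition \ref{prop:HtElliptic}, the task reduces to evaluating $J_t$ on that curve.

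The key observation is that points $w$ on the upper boundary of $\Lambda_t$ are precisely of the form $w=H_t^{-1}(q)$ with $q\in[-2\sqrt{s+t},2\sqrt{s+t}]$, by the very construction in the proof of Proposition \ref{prop:HtElliptic}. In particular, $H_t(w)=q$ is real, so
\[
J_t(w)=2w-H_t(w)=2w-q.
\]
Substituting the explicit formula
\[
w=a_0+iv_t(a_0)=\frac{(2s+t)q+it\sqrt{4(s+t)-q^2}}{2(s+t)}
\]
yields, after a one-line simplification,
\[
J_t(w)=\frac{(2s+t)q+it\sqrt{4(s+t)-q^2}}{s+t}-q=\frac{sq+it\sqrt{4(s+t)-q^2}}{s+t},
\]
which is the claimed parametrization of $\partial\Omega_t$. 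The lower boundary is obtained by complex conjugation, since $J_t$ commutes with complex conjugation and so does the parametrization.

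To identify the curve as an ellipse, I would set $a=sq/(s+t)$ and $b=t\sqrt{4(s+t)-q^2}/(s+t)$, solve the first for $q$, substitute into the second, and observe that the resulting relation is
\[
\frac{a^2}{s^2}+\frac{b^2}{t^2}=\frac{4}{s+t},
\]
which rearranges to
\[
\frac{a^2}{(2s/\sqrt{s+t})^2}+\frac{b^2}{(2t/\sqrt{s+t})^2}=1.
\]
As $q$ ranges over $[-2\sqrt{s+t},2\sqrt{s+t}]$, the pair $(a,b)$ traces out the full ellipse. There is no serious obstacle here; the only point requiring any attention is making sure that the parametrization from Proposition \ref{prop:HtElliptic} really does correspond to $H_t(w)=q$, so that the substitution $H_t(w)=q$ in $J_t(w)=2w-H_t(w)$ is legitimate. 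This is immediate from the construction of $w$ as $H_t^{-1}(q)$.
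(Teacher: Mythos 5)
Your proposal is correct and follows essentially the same route as the paper: parametrize the upper boundary of $\Lambda_t$ via Proposition \ref{prop:HtElliptic}, use $J_t(z)=2z-H_t(z)$ together with $H_t(w)=q$ on that boundary to get $J_t(w)=2w-q$, and simplify to the claimed parametrization of $\partial\Omega_t$. Your additional explicit verification that the curve satisfies $\frac{a^2}{(2s/\sqrt{s+t})^2}+\frac{b^2}{(2t/\sqrt{s+t})^2}=1$ is just a slightly more detailed version of the paper's concluding remark identifying the ellipse and its semi-axes.
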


\begin{proof}
By Proposition \ref{prop:HtElliptic}, the upper boundary of $\Lambda_{t}$ can
be parametrized by the curve in (\ref{ellipseUpper}). By Definition
\ref{omegaT.def}, we find the boundary curve of $\Omega_{t}$ by applying the
map $J_{t}$ in (\ref{jtDef}), which satisfies $J_{t}(z)=2z-H_{t}(z)$. Thus,
\begin{align*}
a+ib_{t}(a)  &  =J_{t}(a_{0}+iv_{t}(a_{0}))\\
&  =\frac{sq+it\sqrt{4(s+t)-q^{2}}}{s+t}.
\end{align*}
which traces an ellipse centered at the origin with semi-axes $\frac{2s}{s+t}$
and $\frac{2t}{\sqrt{s+t}}$.
\end{proof}

\begin{proof}
[Proof of Theorem \ref{thm:EllipticLaw}]The domain $\Omega_{t}$ is computed in
Proposition \ref{prop:OmegatEllipse}. By Proposition \ref{prop:OmegatEllipse}%
,
\begin{align*}
a  &  =\frac{sq}{s+t}\\
a_{0}^{t}(a)  &  =\frac{(2s+t)q}{2(s+t)}=\frac{(2s+t)}{2s}a.
\end{align*}
It follows that the density of the Brown measure is
\begin{align*}
\frac{1}{2\pi t}\left(  \frac{da_{0}^{t}(a)}{da}-\frac{1}{2}\right)   &
=\frac{1}{2\pi t}\left(  \frac{2s+t}{2s}-\frac{1}{2}\right) \\
&  =\frac{1}{4\pi}\left(  \frac{1}{s}+\frac{1}{t}\right)  ,
\end{align*}
as claimed.
\end{proof}

\subsection{Bernoulli case}

In our second example, the law $\mu$ of $x_{0}$ is Bernoulli distributed, with
mass $\alpha$ at $1$ and mass $\beta=1-\alpha$ at $-1$, for $0<\alpha<1.$ The
case $\alpha=1/2$ was previously analyzed in the paper of Stephanov
\cite{stephanov} and also in Section V of \cite{JN1} by different methods.

Denote by $Q(a)$ the quartic polynomial
\begin{equation}
-4a^{4}+4t(\alpha-\beta)a^{3}-(t^{2}+4t-8)a^{2}+2t(t-2)(\alpha-\beta
)a-(\alpha-\beta)^{2}t^{2}+4t-4. \label{eq:Qdef}%
\end{equation}
Then the domain $\Omega_{t}$ and the density of the Brown measure in this
example are computed in the following proposition.

\begin{proposition}
\label{prop:BernoulliResult} Any $\lambda\in\Omega_{t}$ satisfies $\left\vert
\operatorname{Re}\lambda\right\vert <1$. The domain $\Omega_{t}$ is given by
\[
\Omega_{t}=\left\{  a+ib\in\mathbb{C}\left\vert b^{2}<\frac{Q(a)}%
{(1-a^{2})^{2}}\right.  \right\}
\]
so that
\[
\Omega_{t}\cap\mathbb{R}=\{a\in\mathbb{R}|Q(a)>0\}.
\]
The density of the Brown measure in this Bernoulli case is given by
\[
w_{t}(\lambda)=\frac{1}{4\pi}\left(  -\frac{1}{t}+\frac{\beta}{(a-1)^{2}%
}+\frac{\alpha}{(a+1)^{2}}\right)  .
\]

\end{proposition}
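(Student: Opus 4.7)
The plan is to apply the general recipe from Summary \ref{surject.summary} and Theorem \ref{thm:main} directly to $\mu=\alpha\delta_{1}+\beta\delta_{-1}$, which reduces the integral equations to finite sums that can be solved in closed form.

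First I would write the defining equations (\ref{p0EqIntro}) and (\ref{p1EqIntro}) explicitly as
\[
\frac{\alpha}{(a_{0}-1)^{2}+v^{2}}+\frac{\beta}{(a_{0}+1)^{2}+v^{2}}=\frac{1}{t},\qquad
\frac{\alpha}{(a_{0}-1)^{2}+v^{2}}-\frac{\beta}{(a_{0}+1)^{2}+v^{2}}=\frac{a}{t}.
\]
Adding and subtracting decouples the system and yields
\[
(a_{0}-1)^{2}+v^{2}=\frac{2\alpha t}{1+a},\qquad (a_{0}+1)^{2}+v^{2}=\frac{2\beta t}{1-a}.
\]
The requirement that both right-hand sides be positive forces $|a|<1$; this also follows a priori from Proposition \ref{prop:OmegatBound}. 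Subtracting the two equations gives $4a_{0}=\frac{2\beta t}{1-a}-\frac{2\alpha t}{1+a}$, which I would simplify (using $\alpha+\beta=1$ and writing $c:=\alpha-\beta$) to
\[
a_{0}^{t}(a)=\frac{t(a-c)}{2(1-a^{2})}.
\]

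Next I would compute the density via Theorem \ref{thm:main}. A direct differentiation with the quotient rule gives $\frac{da_{0}^{t}}{da}=\frac{t}{2}\cdot\frac{1+a^{2}-2ac}{(1-a^{2})^{2}}$, so
\[
w_{t}(\lambda)=\frac{1}{2\pi t}\left(\frac{da_{0}^{t}}{da}-\frac{1}{2}\right)=\frac{1}{4\pi}\left(\frac{1+a^{2}-2ac}{(1-a^{2})^{2}}-\frac{1}{t}\right).
\]
The claimed formula then reduces to the partial fraction identity
\[
\frac{\beta}{(a-1)^{2}}+\frac{\alpha}{(a+1)^{2}}=\frac{\beta(a+1)^{2}+\alpha(a-1)^{2}}{(a^{2}-1)^{2}}=\frac{1+a^{2}-2ac}{(1-a^{2})^{2}},
\]
which is immediate upon expanding the numerator and using $\alpha+\beta=1$.

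For the domain, I would use Theorem \ref{intro.thm} and Proposition \ref{Jprops.prop}: a point $a+ib\in\Omega_{t}$ iff $b^{2}<b_{t}(a)^{2}=4v_{t}(a_{0}^{t}(a))^{2}$. Adding the two decoupled equations gives $a_{0}^{2}+1+v^{2}=\frac{t(1-ac)}{1-a^{2}}$, so
\[
(1-a^{2})^{2}\cdot 4v^{2}=4t(1-ac)(1-a^{2})-4(1-a^{2})^{2}-t^{2}(a-c)^{2}
\]
after substituting $a_{0}^{t}(a)=\frac{t(a-c)}{2(1-a^{2})}$. Expanding the right-hand side monomial by monomial, the coefficient of $a^{k}$ matches the coefficient of $a^{k}$ in the polynomial $Q(a)$ in (\ref{eq:Qdef}): the constant is $4t-4-t^{2}c^{2}$, the $a$-coefficient is $2tc(t-2)$, the $a^{2}$-coefficient is $8-4t-t^{2}$, the $a^{3}$-coefficient is $4tc$, and the $a^{4}$-coefficient is $-4$. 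This gives $4v^{2}=Q(a)/(1-a^{2})^{2}$, establishing the formulas for $\Omega_{t}$ and $\Omega_{t}\cap\mathbb{R}$.

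The computations in every step are routine; the only mild obstacle is bookkeeping in the quartic expansion that verifies $4(1-a^{2})^{2}v^{2}=Q(a)$, but the match is clean once $c=\alpha-\beta$ is used consistently and $\alpha+\beta=1$ is applied.
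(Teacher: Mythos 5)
Your proposal is correct and follows essentially the same route as the paper: specialize (\ref{p0EqIntro})--(\ref{p1EqIntro}) to the two-point measure, solve the resulting pair of equations for $(a_{0}-1)^{2}+v^{2}$ and $(a_{0}+1)^{2}+v^{2}$ (the paper's $A$ and $B$), recover $a_{0}^{t}(a)$ and $v^{2}=Q(a)/\bigl(4(1-a^{2})^{2}\bigr)$, and then read off the density from Theorem \ref{thm:main}. The only differences are cosmetic: you add and subtract the equations rather than naming $A$ and $B$, and you spell out the partial-fraction identity that the paper leaves implicit.
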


See Figure \ref{bernoullidensity100.fig}.%

\begin{figure}[ptb]%
\centering
\includegraphics[
height=2.7691in,
width=3.5276in
]%
{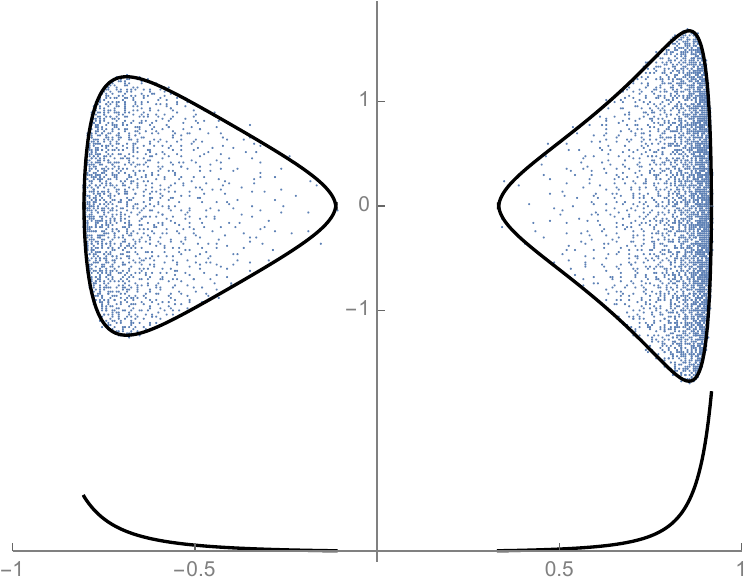}%
\caption{The domain $\Omega_{t}$ in the Bernoulli case with a simulation of
the eigenvalues (top), plotted with the density of the Brown measure (in
$\Omega_{t}$) as a function of $a$ (bottom). Shown for $\alpha=2/3$ and
$t=1.$}%
\label{bernoullidensity100.fig}%
\end{figure}

\begin{proof}
Recall the functions $a_{0}^{t}(a)$ and $b_{t}(a)$ defined by the four
equations (\ref{p0EqIntro})--(\ref{btDef}). We now compute these functions for
the Bernoulli case. The equations (\ref{p0EqIntro}) and (\ref{p1EqIntro}) take
the following form in the Bernoulli case:
\begin{align*}
\frac{\alpha}{(a_{0}-1)^{2}+v^{2}}+\frac{\beta}{(a_{0}+1)^{2}+v^{2}}  &
=\frac{1}{t}\\
\frac{\alpha}{(a_{0}-1)^{2}+v^{2}}-\frac{\beta}{(a_{0}+1)^{2}+v^{2}}  &
=\frac{a}{t}.
\end{align*}
We then introduce the new variables
\begin{align*}
A=  &  (a_{0}-1)^{2}+v^{2}\\
B=  &  (a_{0}+1)^{2}+v^{2}%
\end{align*}
so that
\begin{align*}
\frac{\alpha}{A}+\frac{\beta}{B}  &  =\frac{1}{t}\\
\frac{\alpha}{A}-\frac{\beta}{B}  &  =\frac{a}{t}.
\end{align*}
Then we can solve for $A$ and $B$ as
\[
A=\frac{2\alpha t}{1+a},\quad B=\frac{2\beta t}{1-a}.
\]

We can recover $a_{0}$ and $v^{2}$ as
\[
a_{0}=\frac{1}{4}(B-A)=\frac{t}{2}\frac{a+\beta-\alpha}{1-a^{2}}%
\]
and
\begin{align*}
v^{2}  &  =\frac{A+B}{2}-a_{0}^{2}-1\\
&  =\frac{\alpha t(1-a)+\beta t(1+a)}{1-a^{2}}-\left(  \frac{t}{2}%
\frac{a+\beta-\alpha}{1-a^{2}}\right)  ^{2}-1\\
&  =\frac{Q(a)}{4(1-a^{2})^{2}}%
\end{align*}
where $Q$ is defined in (\ref{eq:Qdef}). Recalling that $b_{t}(a)=2v$, we find
that
\begin{equation}
a_{0}^{t}(a)=\frac{t}{2}\frac{a+\beta-\alpha}{1-a^{2}} \label{eq:Beroullia0}%
\end{equation}
and
\begin{equation}
b_{t}(a)^{2}=\frac{Q(a)}{(1-a^{2})^{2}}. \label{eq:Bernoullivt}%
\end{equation}

Equation (\ref{eq:Bernoullivt}) gives the claimed form of the domain
$\Omega_{t}.$ The density of the Brown measure is then computed from
(\ref{eq:Beroullia0}) using the formula in Theorem \ref{thm:main}.
\end{proof}

\subsection{Uniform case}

In our third and final example, $\mu$ is uniformly distributed on $[-1,1]$.
The case in which $\mu$ is uniformly distributed on any interval can be
reduced to this case as follows. First, by shifting $x_{0}$ by a constant, we
can assume that the interval has the form $[-A,A].$ Once this is the case, we
write%
\[
x_{0}+i\sigma_{t}=A\left(  x_{0}/A+i\sigma_{t}/A\right)  ,
\]
where the law of $x_{0}/A$ is uniform on $[-1,1]$ and $\sigma_{t}/A$ has the
same $\ast$-distribution as $\sigma_{t/A^{2}}.$ Thus, to compute the Brown
measure in this case, we use the formulas below with $t$ replaced by $t/A^{2}$
and then scale the entire Brown measure by a factor of $A.$ 

When $\mu$ is uniform on $[-1,1]$---in particular, symmetric about $0$---the
Brown measure is symmetric about the imaginary axis. The key is again solving
the equations (\ref{p0EqIntro}) and\ (\ref{p1EqIntro}) that define the
functions $a_{0}^{t}(a)$ and $b_{t}(a).$

\begin{proposition}
\label{prop:UniformOmega} Let $v_{\max}$ be the smallest positive real number
$v$ such that
\[
\frac{1}{v}=\tan\left(  \frac{v}{t}\right)  .
\]
Also let $A_{0}^{t}(v)$ be given by
\[
A_{0}^{t}(v)=\sqrt{2v\cot\left(  \frac{2v}{t}\right)  +1-v^{2}}.
\]
for $\left\vert v\right\vert \leq v_{\max}$. (When $v=0$, we understand the
above formula as $A_{0}^{t}(0)=\sqrt{t+1}$.) Then the following results hold.

\begin{enumerate}
\item The domain $\Omega_{t}$ has only one connected component, and is given
by
\begin{equation}
\Omega_{t}=\{\pm A_{t}(b/2)+iy|~\left\vert y\right\vert <b,~\left\vert
b\right\vert \leq2v_{\max}\} \label{eq:OmegatUniformSet}%
\end{equation}
where
\[
A_{t}(v)=A_{0}^{t}(v)+\frac{t}{4}\log\left(  1-\frac{4A_{0}^{t}(v)}{(A_{0}%
^{t}(v)+1)^{2}+v^{2}}\right)  .
\]
That is, $\partial\Omega_{t}$ consists of the two curves
\[
\pm A_{t}(b/2)+ib,\quad\left\vert b\right\vert \leq2v_{\max}.
\]
In particular, the function $b_{t}$ defined in (\ref{eq:btDef}) is unimodal
with a peak at $0$.

\item The domain $\Omega_{t}$ satisfies
\[
\Omega_{t}\cap\mathbb{R}=\left\{  a\in\mathbb{R}\left\vert \left\vert
a\right\vert <\sqrt{t+1}-\frac{t}{2}\log\left(  \frac{\sqrt{t+1}+1}{\sqrt
{t+1}-1}\right)  \right.  \right\}  .
\]

\item The density of the Brown measure in $\Omega_{t}$ is (as always) a
function of $a$ and $t$ only, and the graph of this function is traced out by
the curve%
\begin{equation}
(A_{t}(v),W_{t}(v)),\quad\left\vert v\right\vert <v_{\max},\label{AtWt}%
\end{equation}
where
\[
W_{t}(v)=\frac{t^{2}+4(t+2)v^{2}-t(t+4v^{2})\cos\left(  4v/t\right)
-4tv\sin\left(  4v/t\right)  }{4\pi t\left(  -t^{2}+8v^{2}+t^{2}\cos\left(
4v/t\right)  \right)  }.
\]
To the extent that we can compute the height $b_{t}$ of $\Omega_{t}$ as a
function of $a,$ we can then compute the density of the Brown measure as a
function of $a$ by replacing $v$ by $b_{t}(a)/2$ in the above expression. That
is, the density $w_{t}(\lambda)$ is given by
\begin{equation}
w_{t}(\lambda)=W_{t}(b_{t}(a)/2).\label{wtWt}%
\end{equation}

\end{enumerate}
\end{proposition}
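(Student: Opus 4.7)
The strategy is to evaluate the defining system \eqref{p0EqIntro}--\eqref{p1EqIntro} explicitly for $d\mu(x) = \tfrac{1}{2}\chi_{[-1,1]}(x)\,dx$ and then feed the result into Theorem \ref{thm:main} and Proposition \ref{Jprops.prop}. Setting $\theta_\pm = \arctan((a_0 \pm 1)/v)$, the elementary antiderivatives of $1/((a_0-x)^2+v^2)$ and $x/((a_0-x)^2+v^2)$ reduce \eqref{p0EqIntro} to $\theta_+-\theta_- = 2v/t$ and \eqref{p1EqIntro} to
\[
\frac{a_0}{t} + \frac{1}{4}\log\frac{(a_0-1)^2+v^2}{(a_0+1)^2+v^2} = \frac{a}{t}.
\]
The tangent subtraction formula converts the first into $\tan(2v/t) = 2v/(a_0^2+v^2-1)$, whence $a_0^2 = 2v\cot(2v/t) + 1 - v^2 = A_0^t(v)^2$. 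Using the identity $(a_0-1)^2+v^2 = (a_0+1)^2+v^2 - 4a_0$ to rewrite the logarithm, the second equation with $a_0 = A_0^t(v)$ becomes $a = A_t(v)$.

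For Point (1)'s parametrisation of $\partial\Omega_t$, Point \ref{graphs.point} of Proposition \ref{Jprops.prop} gives $b_t(a_t(a_0)) = 2v_t(a_0)$, so the boundary is traced by $\pm A_t(v) + 2iv$ for $v\in[-v_{\max},v_{\max}]$. The endpoint $v_{\max}$ is exactly the value where $A_0^t(v) = 0$, and the condition $2v\cot(2v/t) = v^2-1$ there reduces via the double-angle formula for $\cot$ to the relation $1/v = \tan(v/t)$. Point (2) follows by setting $v=0$: one has $A_0^t(0) = \sqrt{t+1}$, and $(a_0-1)/(a_0+1) = (\sqrt{t+1}-1)/(\sqrt{t+1}+1)$ makes the logarithm in $A_t(0)$ collapse to the advertised form.

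The main conceptual obstacle is the claim that $\Omega_t$ has a single connected component, equivalently that $A_t\colon[0,v_{\max}]\to[0,A_t(0)]$ is a bijection. The plan is to factor $A_t = a_t \circ A_0^t$ (in the notation of Proposition \ref{Jprops.prop}) and show each factor is strictly monotone. That $a_t$ is strictly increasing on $\Lambda_t\cap\mathbb{R}_{\geq 0}$ is Point \ref{atHomeo.point} of Proposition \ref{Jprops.prop}. That $A_0^t$ is strictly decreasing on $[0,v_{\max}]$ follows by implicit differentiation of $F(a_0,v) := \int_{-1}^{1}\tfrac{1}{2}((a_0-x)^2+v^2)^{-1}\,dx = 1/t$: one has $\partial F/\partial v < 0$ always, and $\partial F/\partial a_0 < 0$ for $a_0 > 0$ because after translating by $u = a_0 - x$, the integrand $u/(u^2+v^2)^2$ is odd in $u$ while the interval $(a_0-1,a_0+1)$ is shifted to the right of the origin. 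The implicit function theorem then yields $dA_0^t/dv < 0$, the composition $A_t$ is strictly decreasing, unimodality of $b_t$ follows from $b_t(-a) = b_t(a)$, and $\Omega_t$ is a single vertical strip.

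For the density in Point (3), I would apply Theorem \ref{thm:main} with $da_0^t/da = (dA_0^t/dv)/(dA_t/dv)$ in the parametrisation by $v$. Both derivatives have closed forms: $dA_0^t/dv$ from implicit differentiation of $A_0^t(v)^2 = 2v\cot(2v/t) + 1 - v^2$, and $dA_t/dv$ from direct differentiation of the closed form for $A_t$. Substituting into $w_t = (1/(2\pi t))(da_0^t/da - 1/2)$ and repeatedly applying the double-angle identities $2\sin^2(2v/t) = 1-\cos(4v/t)$ and $2\sin(2v/t)\cos(2v/t) = \sin(4v/t)$ to clear $\cot$ and $\csc$ yields $W_t(v)$. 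This last simplification is mechanical but bulky, and I expect it to be the most time-consuming step, though with no conceptual content beyond trigonometric bookkeeping.
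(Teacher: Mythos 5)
Your proposal is correct and follows essentially the same route as the paper: evaluate the two defining integral equations for the uniform law via arctangent and logarithm antiderivatives to get $a_{0}=A_{0}^{t}(v)$ and $a=A_{t}(v)$, use monotonicity of $a_{t}$ together with monotonicity of $A_{0}^{t}$ (equivalently unimodality of $v_{t}$) to get the boundary parametrization and connectedness, let $v\rightarrow0$ for Point 2, and compute $da_{0}^{t}/da=(dA_{0}^{t}/dv)/(dA_{t}/dv)$ for the density. The only differences are cosmetic --- you prove $dA_{0}^{t}/dv<0$ by implicit differentiation of the integral (a fine, if anything more explicit, substitute for the paper's inverse-function argument), and you locate $v_{\max}$ via $A_{0}^{t}(v)=0$ and the cotangent double-angle identity, where you should note that the resulting quadratic in $\tan(v/t)$ also has the spurious root $-v$ (ruled out since it forces $v/t>\pi/2$), whereas the paper avoids this by setting $a_{0}=0$ directly in the arctangent equation.
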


See Figure \ref{uniformdensity.fig}.%

\begin{figure}[ptb]%
\centering
\includegraphics[
height=2.808in,
width=3.5276in
]%
{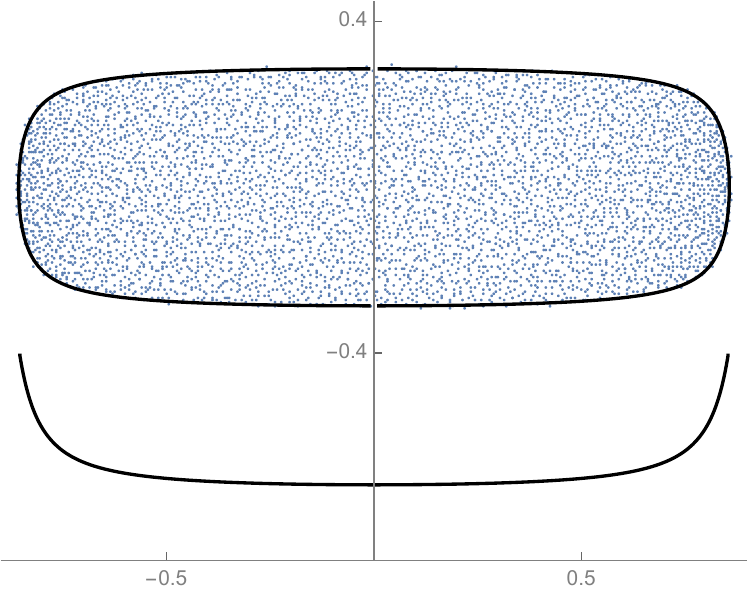}%
\caption{The domain $\Omega_{t}$ in the uniform case with a simulation of the
eigenvalues (top), plotted with the density of the Brown measure (in
$\Omega_{t}$) as a function of $a$ (bottom). Shown for $t=0.1.$}%
\label{uniformdensity.fig}%
\end{figure}

Before we prove Proposition \ref{prop:UniformOmega}, we need some computations
about $v_{t}$ and $\Lambda_{t}$ from the following proposition.

\begin{proposition}
The following results about $v_{t}$ and $\Lambda_{t}$ hold.

\begin{enumerate}
\item The function $v_{t}$ is unimodal, with a peak at $a_{0}=0$. The maximum
$v_{\max} = v_{t}(0)$ is the smallest positive real number $v$ such that
\[
\frac{1}{v}=\tan\left(  \frac{v}{t}\right)  .
\]
In particular, $v_{t}(0)< \frac{\pi t}{2}$.

\item The domain $\Lambda_{t}$ has only one connected component. Its boundary
can be described by the two curves
\[
\pm A_{0}^{t}(v)+iv,\quad\left\vert v\right\vert \leq v_{\max}%
\]
where
\begin{equation}
A_{0}^{t}(v)=\sqrt{2v\cot\left(  \frac{2v}{t}\right)  +1-v^{2}}.
\label{eq:Uniforma0}%
\end{equation}

\item The domain $\Lambda_{t}$ satisfies
\begin{equation}
\label{eq:LambdatUniform}\Lambda_{t}\cap\mathbb{R }= (-\sqrt{t+1}, \sqrt
{t+1}).
\end{equation}

\end{enumerate}
\end{proposition}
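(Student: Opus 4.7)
The plan is to substitute $d\mu(x) = \tfrac12 \chi_{[-1,1]}(x)\,dx$ into the defining equation
\[
\int_{\mathbb{R}}\frac{d\mu(x)}{(a_{0}-x)^{2}+v^{2}} = \frac{1}{t}
\]
for $v_t$ from Proposition \ref{vt.prop} and evaluate the resulting integral explicitly. An antiderivative computation gives
\[
\frac{1}{2v}\Bigl[\arctan\!\Bigl(\tfrac{1-a_{0}}{v}\Bigr)+\arctan\!\Bigl(\tfrac{1+a_{0}}{v}\Bigr)\Bigr] = \frac{1}{t}.
\]
Taking tangents and using the addition formula $\tan(\alpha+\beta) = (\tan\alpha+\tan\beta)/(1-\tan\alpha\tan\beta)$ with $\tan\alpha = (1-a_0)/v$ and $\tan\beta = (1+a_0)/v$ yields, after clearing denominators, the clean implicit relation
\begin{equation}
a_{0}^{2}+v^{2}-1 = 2v\cot(2v/t).\label{eq:planImplicit}
\end{equation}
This is the engine that drives all three parts.

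For Part 3, I would apply Proposition \ref{vt.prop} directly: a point $a_0\in\mathbb{R}$ lies in $\Lambda_t$ iff $\int(a_0-x)^{-2}\,d\mu(x)>1/t$. For $|a_0|\le 1$ the integral diverges, while for $|a_0|>1$ it evaluates to $1/(a_0^2-1)$, which exceeds $1/t$ precisely when $a_0^2<t+1$. This gives $\Lambda_t\cap\mathbb{R}=(-\sqrt{t+1},\sqrt{t+1})$. For Part 2, solving \eqref{eq:planImplicit} for $a_0$ yields the two parametrizing curves $a_0=\pm A_0^t(v)$, with $A_0^t(v) = \sqrt{2v\cot(2v/t)+1-v^2}$ as stated; connectedness of $\Lambda_t$ then follows from continuity of $v_t$ together with the fact (established in Part 3) that $v_t$ is positive on a single open interval and vanishes outside it. For the statement about $v_{\max}$ in Part 1, I would specialize \eqref{eq:planImplicit} to $a_0=0$ to obtain $v^2-1 = 2v\cot(2v/t)$; applying the double-angle identity $\cot(2\psi) = (\cot^2\psi-1)/(2\cot\psi)$ with $\psi=v/t$ collapses this to $\cot(v/t)=v$, equivalently $\tan(v/t) = 1/v$. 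Since $\tan(v/t)$ increases from $0$ to $+\infty$ on $(0,\pi t/2)$ while $1/v$ is finite and decreasing there, the intermediate value theorem gives a unique solution in this interval and no smaller positive solution (in $(\pi t/2,\pi t)$ the tangent is negative); hence $v_{\max}<\pi t/2$.

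The core remaining issue, and what I expect to be the main obstacle, is establishing unimodality: that the function $F(v):=2v\cot(2v/t)+1-v^2$ on the right side of \eqref{eq:planImplicit} is \emph{strictly} decreasing on $(0,v_{\max})$, so that the curve $(\pm\sqrt{F(v)},v)$ actually describes $v_t$ as a unimodal function of $a_0$ rather than something multivalued. I would compute
\[
F'(v) = 2\cot(2v/t) - \tfrac{4v}{t}\csc^{2}(2v/t) - 2v
\]
and, with the substitution $\theta=2v/t\in(0,\pi)$, rewrite the first two terms as
\[
2\cot\theta - 2\theta\csc^{2}\theta = \frac{\sin(2\theta)-2\theta}{\sin^{2}\theta}.
\]
This is negative on $(0,\pi)$: on $(0,\pi/2]$ from $\sin x\le x$ applied to $x=2\theta$, and on $(\pi/2,\pi)$ from $\sin(2\theta)\le 1<\pi<2\theta$. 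The extra $-2v$ only helps. Since $v_{\max}<\pi t/2$ corresponds to $\theta<\pi$, we conclude $F'<0$ on $(0,v_{\max})$, so $v_t$ is strictly decreasing in $|a_0|$ on $[0,\sqrt{t+1}]$ with its unique peak at $a_0=0$. This closes out Part 1.
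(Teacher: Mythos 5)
Your proposal is correct, and its computational core coincides with the paper's: evaluate the arctangent integral, use the tangent addition law to obtain $a_{0}^{2}=2v\cot(2v/t)+1-v^{2}$, and specialize to $a_{0}=0$ for $v_{\max}$. Where you diverge is in the two places the paper is softest. For Point 3 you compute $\int(a_{0}-x)^{-2}\,d\mu(x)=1/(a_{0}^{2}-1)$ directly and apply the membership criterion of Proposition \ref{vt.prop}, whereas the paper obtains \eqref{eq:LambdatUniform} by letting $v\rightarrow0$ in the parametrization; both are fine, yours is marginally more self-contained. More substantively, for unimodality the paper argues that since $v_{t}$ restricted to $a_{0}\geq0$ admits the inverse $A_{0}^{t}$, it is injective, hence (being continuous and vanishing at $\sqrt{t+1}$) strictly decreasing; you instead prove monotonicity quantitatively, showing $F(v)=2v\cot(2v/t)+1-v^{2}$ has $F^{\prime}<0$ on $(0,\pi t/2)$ via $\sin(2\theta)<2\theta$. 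Your route is more explicit and, since you in fact get $F^{\prime}<0$ on all of $(0,\pi t/2)$ while $v_{t}(a_{0})<\pi t/2$ automatically from the defining equation, it pins down $v_{t}$ as the inverse of $\sqrt{F}$ without circularity; the paper's argument is shorter but leaves the step from ``has an inverse'' to ``strictly decreasing'' implicit. Two small points of care: the double-angle reduction at $a_{0}=0$ factors as $(\cot(v/t)-v)(v\cot(v/t)+1)=0$, so you should discard the second factor by noting $\cot(v/t)>0$ on the relevant range (or, as the paper does, put $a_{0}=0$ directly into \eqref{eq:Uniformp0} to get $\arctan(1/v)=v/t$, which also immediately gives $v_{t}(0)<\pi t/2$ and identifies it as the smallest positive root); and the identification $v_{\max}=v_{t}(0)$ of course rests on the unimodality you establish at the end, so the logical order is Part 3, monotonicity, then Parts 1 and 2. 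Neither point is a gap in substance.
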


\begin{proof}
In the uniform case, (\ref{p0EqIntro}) takes the form
\begin{equation}
\int_{\mathbb{R}}\frac{d\mu(x)}{(a_{0}-x)^{2}+v^{2}}=\frac{\arctan\left(
\frac{1-a_{0}}{v}\right)  +\arctan\left(  \frac{1+a_{0}}{v}\right)  }%
{2v}=\frac{1}{t}. \label{eq:Uniformp0}%
\end{equation}
Using the addition law of inverse tangent,
\[
\arctan(A)+\arctan(B)=\arctan\left(  \frac{A+B}{1-AB}\right)  ,
\]
we can easily solve for $a_{0}^{2}$ as a function of $v$:
\begin{equation}
a_{0}^{2}=2v\cot\left(  \frac{2v}{t}\right)  +1-v^{2}. \label{eq:a02Inv}%
\end{equation}
Restricted to $a_{0}\geq0$, (\ref{eq:a02Inv}) defines $a_{0}=A_{0}^{t}$ as a
function of $v$ as in (\ref{eq:Uniforma0}).

The function $v_{t}(a_{0})$ cannot be represented as an elementary function of
$a_{0}$; we, however, have proved in the preceding paragraph that $v_{t}$
restricted to $a_{0}\geq0$ in $\Lambda_{t}$ has an inverse $A_{0}^{t}$. The
function $v_{t}$ then must be strictly decreasing from $0$ to $\sup
(\Lambda_{t}\cap\mathbb{R})$; by symmetry, it is strictly increasing from
$\inf(\Lambda_{t}\cap\mathbb{R})$ to $0$. In particular, $v_{t}$ is unimodal
with global maximum at $a_{0}=0$. Putting $a_{0}=0$ in (\ref{eq:Uniformp0}),
the maximum $v_{\max}=v_{t}(0)$ is the smallest positive real number $v$ such
that
\[
\frac{1}{v}=\tan\left(  \frac{v}{t}\right)  .
\]
Thus, $v_{t}(0)=v_{\max}<\frac{\pi t}{2}$. This proves Point 1.

Since $v_{t}$ is unimodal, the domain $\Lambda_{t}$ has only one connected
component. By Definition \ref{lambdaT.def}, $\Lambda_{t}$ is symmetric about
the real axis. In our case, $\Lambda_{t}$ is also symmetric about the
imaginary axis and the right hand side of (\ref{eq:Uniforma0}) defines an even
function of $v$. Thus, the boundary of $\Lambda_{t}$ can be described by the
curves
\[
\partial\Lambda_{t}=\{(\pm A_{0}^{t}(v),v)|~\left\vert v\right\vert \leq
v_{\max}\},
\]
which is Point 2. Since $a_{0}(v)^{2}\rightarrow t+1$ as $v\rightarrow0$,
(\ref{eq:LambdatUniform}) holds, which proves Point 3.
\end{proof}

\begin{proof}
[Proof of Proposition \ref{prop:UniformOmega}]In the uniform case, the
equation (\ref{p1EqIntro}) takes the form
\begin{align*}
a  &  =t\int_{\mathbb{R}}\frac{x}{(a_{0}-x)^{2}+v_{t}(a_{0})^{2}}~d\mu(x)\\
&  =a_{0}+\frac{t}{4}\log\left(  1-\frac{4a_{0}}{(a_{0}+1)^{2}+v_{t}%
(a_{0})^{2}}\right)  .
\end{align*}
For $a\geq0$ in $\Omega_{t}$, we can express $a$ as a function $a=A_{t}(v)$ of
$v=v_{t}(a_{0})$ using (\ref{eq:Uniforma0}) as
\begin{equation}
A_{t}(v)=A_{0}^{t}(v)+\frac{t}{4}\log\left(  1-\frac{4A_{0}^{t}(v)}{(A_{0}%
^{t}(v)+1)^{2}+v^{2}}\right)  \label{eq:Uniforma}%
\end{equation}
for $0<v\leq v_{\max}$.

Using the definition of $b_{t}$ in (\ref{eq:btDef}), $b_{t}(a)=2v_{t}%
(a_{0}^{t}(a))=2v$. Thus, $b_{t}$ is unimodal on $\Omega_{t}\cap\mathbb{R}$
with a peak at $0$. The domain $\Omega_{t}$ has only one connected component
whose boundary can be described by the two curves
\[
\pm A_{t}(b/2)+ib,\quad\left\vert b\right\vert \leq2v_{\max}.
\]
Thus, (\ref{eq:OmegatUniformSet}) follows. This proves Point 1.

Using (\ref{eq:LambdatUniform}), we can compute the limit as $v\rightarrow0$
in (\ref{eq:Uniforma}), from which the claimed from of $\Omega_{t}%
\cap\mathbb{R}$ follows, establishing Point 2.

Now, since both $a_{0}$ and $a$ are functions of $v$ when $a_{0}$ and $a$ are
nonnegative, we can compute
\begin{equation}
\frac{da_{0}^{t}(a)}{da}=\frac{dA_{0}^{t}(v)/dv}{dA_{t}(v)/dv}.
\label{eq:da0daUniform}%
\end{equation}
This result also holds for negative $a$ because $\Lambda_{t}$ and $\Omega_{t}$
are symmetric about the imaginary axis. The density of the Brown measure is
then computed using Theorem \ref{thm:main}. Using (\ref{eq:da0daUniform}), the
density can be expressed in terms of $v$ as
\[
\frac{1}{2\pi t}\left(  \frac{da_{0}^{t}(a)}{da}-\frac{1}{2}\right)
=\frac{t^{2}+4(t+2)v^{2}-t(t+4v^{2})\cos\left(  \frac{4v}{t}\right)
-4tv\sin\left(  \frac{4v}{t}\right)  }{4\pi t\left(  -t^{2}+8v^{2}+t^{2}%
\cos\left(  \frac{4v}{t}\right)  \right)  },
\]
establishing (\ref{AtWt}). Since $b_{t}(a)=2v$, we obtain (\ref{wtWt}),
completing the proof.
\end{proof}

\subsection*{Acknowledgments}

The authors thank Todd Kemp, Maciej Nowak, and Roland Speicher for useful
discussions. We also thank the referee for reading the paper thoroughly and
for making valuable suggestions and corrections.

\end{document}